\newcommand{\beq}{\begin{equation}}
\newcommand{\eeq}{\end{equation}}
\newcommand{\beqar}{\begin{eqnarray}}
\newcommand{\eeqar}{\end{eqnarray}}
\newcommand{\bal}{\begin{aligned}}
\newcommand{\eal}{\end{aligned}}
\newcommand{\bfYset}{\boldsymbol{\cal Y}}
\newcommand{\bfy}{\boldsymbol{y}}
\newcommand{\bflambda}{\boldsymbol{\lambda}}
\newcommand{\bfw}{\boldsymbol{w}}
\newcommand{\bfv}{\boldsymbol{v}}
\newcommand{\bfQ}{\boldsymbol{Q}}
\newcommand{\bfq}{\boldsymbol{q}}
\newcommand{\bfA}{\boldsymbol{A}}
\newcommand{\bfE}{\boldsymbol{E}}
\newcommand{\bfb}{\boldsymbol{b}}
\newcommand{\bfM}{\boldsymbol{M}}
\newcommand{\bfN}{\boldsymbol{N}}
\newcommand{\bfR}{\boldsymbol{R}}
\newcommand{\bfZ}{\boldsymbol{Z}}
\newcommand{\bfI}{\boldsymbol{I}}
\newcommand{\R}{\mathsf{R}}
\newcommand{\symmats}{\mathsf{S}}
\newcommand{\refl}{\mathbb{R}}
\newcommand{\proj}{\mathbb{P}}
\newcommand{\half}{\frac{1}{2}}
\newcommand{\bfu}{\boldsymbol{u}}
\newcommand{\bfe}{\boldsymbol{e}}
\newcommand{\bfxi}{\boldsymbol{\xi}}
\newcommand{\bfmu}{\boldsymbol{\mu}}
\newcommand{\bfnu}{\boldsymbol{\nu}}
\newcommand{\activeset}{{\cal A}}
\newcommand{\stacktwo}[2]{(#1,#2)}
\newcommand{\stackthr}[3]{(#1,#2,#3)}
\newtheorem{theorem}{Theorem}
\newtheorem{lemma}{Lemma}
\newtheorem{assumption}{Assumption}
\title{ADMM for Convex Quadratic Programs: Q-Linear Convergence and Infeasibility Detection} 
\author{Arvind U. Raghunathan \; Stefano Di Cairano \\
Mitsubishi Electric Research Laboratories\\
201 Broadway, Cambridge, MA 02139 \\
Email:\texttt{raghunathan@merl.com},\texttt{dicairano@merl.com}}
\begin{document}

\maketitle




\begin{abstract}
In this paper,  we analyze the convergence of Alternating Direction Method of Multipliers (ADMM) 
on convex quadratic programs (QPs) with linear equality and bound constraints.  The ADMM formulation alternates between an equality 
constrained QP and a projection on the bounds. Under the assumptions of 
(i) positive definiteness of the Hessian of the objective projected on the null space of 
equality constraints (reduced Hessian), and (ii) linear independence constraint qualification holding at the optimal solution, we derive an upper bound on 
the rate of convergence to the solution at each iteration. In particular, we provide an explicit characterization of the rate of convergence in terms of: (a) the eigenvalues of the 
reduced Hessian, (b) the cosine of the Friedrichs angle between the subspace spanned by equality constraints and the subspace spanned by the gradients of the components 
that are active at the solution and (c) the distance of the inactive components of solution from the bounds. Using this analysis we show that if the QP is feasible, the iterates 
converge at a Q-linear rate and prescribe an optimal setting for the ADMM step-size parameter. 
For infeasible QPs, we show that the primal variables in ADMM converge to minimizers of the Euclidean distance 
between the hyperplane defined by the equality constraints and the convex set defined by the bounds.  The multipliers for the bound constraints are shown to diverge along the 
range space of the equality constraints.  Using this characterization, we also propose a termination criterion for ADMM.  Numerical examples are provided to illustrate the theory 
through experiments.
\end{abstract}


\section{Introduction}

We consider the solution of convex quadratic program (QP),
\beq\bal
\min &\; \bfq^T\bfy +\half\bfy^T\bfQ\bfy \\
\mbox{s.t.} &\; \bfA\bfy = \bfb \\
&\; \bfy \in \bfYset 
\eal\label{qpform}
\eeq
where, $\bfy \in \R^n$, $\bfQ \succeq 0$ is symmetric positive semidefinite, $\bfYset = [\underline{\bfy},\overline{\bfy}]$ are box constraints with $-\infty \leq \underline{\bfy}_i < \overline{\bfy}_i \leq \infty$ and $\bfA \in \R^{m\times n}$ is full row rank.  In particular, we consider the case where 
$\bfQ$ is positive definite on the null space of the equality constraints.  A number of problems arising from computer vision, compressive sensing, control, finance, machine learning, seismology among others can be cast as~\eqref{qpform}.  Further, solution of QP serves as the direction determining step in nonlinear optimization algorithms such as sequential quadratic programming~\cite{NocedalWrightBook} (see~\cite{GilWon12} for a recent survey).  Efficient solution of QPs has been the subject of several papers spanning the past few decades.    
There is an abundance of literature on algorithms for solution of QPs (refer to a recent survey  by Gill and Wong~\cite{GilWon13}).  Interior point and active-set methods are two 
alternative approaches to the handling of inequality constraints in QPs. A number of active-set algorithms have been proposed for 
strictly convex QPs, see, e.g., Fletcher~\cite{FlegenQP}, Goldfarb and Idnani~\cite{GolIdn83}, Gill et al.~\cite{GilGouMur84}, Powell~\cite{Pow85}, 
Gould~\cite{GougenQP}, Mor\'{e} and Toraldo~\cite{MorTor91}, Gill, Murray and Saunders~\cite{GilMurSau97} and Bartlett and Biegler~\cite{BarBie06}. 
Wright~\cite{WrightIPM} describes a number of interior point 
algorithms for convex QPs. A class of algorithms that is closely related to the ADMM algorithms, which are the focus of this paper, are the Augmented Lagrangian 
methods~\cite{Hes69},\cite{Pow69},  Though originally developed for general nonlinear programs, Delbos and Gilbert~\cite{DelJch05} proved global linear convergence of the algorithm on convex QPs that are feasible. More recently, Chiche and Gilbert~\cite{ChiJch14} have extended the approach to handle infeasible QPs by adaptively determining the smallest shift in 
constraints that renders the problems feasible.

In the following we provide a brief survey of recent developments in Alternating Direction Method of Multipliers (ADMM).  
ADMM has emerged as a popular optimization algorithm for the solution of structured convex programs in the areas of 
compressed sensing \cite{YanZha11}, image processing \cite{WanYanYin08}, machine learning 
\cite{ForCanGia10}, distributed optimization \cite{WeiOzd13}, regularized estimation 
\cite{WahBoyAnn12} and semidefinite programming \cite{MalPovRen09,WenGolYin10}, among 
others. ADMM algorithms were first proposed by Gabay and Mercier \cite{GabMer76} for the 
solution of variational inequalities that arise in solving partial differential equations and were 
developed in the 1970's in the context of optimization. ADMM is a special case of the Douglas-Rachford~\cite{DouRac56} splitting method,
which itself may be viewed as an instance of the proximal point
algorithm \cite{EckBer92,Roc76}. An excellent introduction to the ADMM 
algorithm, its applications, and the vast literature covering the convergence results is provided in~
\cite{Boyadmm}. 

Under  mild assumptions ADMM can be shown to converge for all choices of the 
step-size~\cite{Boyadmm}.   There have been a number of results on the global and local linear 
convergence rates of ADMM for a variety of problem settings.  Goldfarb and Ma~\cite{GolMa12} 
established that for a Jacobi version of ADMM, under assumption of Lipschitz continuous gradients, the objective 
value decreases at the rate of $O(1/k)$ and for an accelerated version at a rate of $O(1/k^2)$.  
Subsequently,~\cite{GolMaSch10} established similar rates for a Gauss-Seidel version while 
relaxing the requirement of strict convexity of both terms in the objective function.  
Deng and Yin~\cite{DenYin12} show global linear convergence under the assumption of 
strict convexity of one of the two objective functions and certain rank assumptions on the matrices 
in the coupling constraints which do not hold for~\eqref{qpform}.  He and Yuan~\cite{HeYua12}  established $O(1/k)$ convergence rates for ADMM using a variational inequality 
formulation.  The proof technique in~\cite{HeYua12} can be directly applied to~\eqref{qpform} 
to establish $O(1/k)$ rate of convergence.  However, no local convergence rates are derived.  
Hong and Luo~\cite{HonLuo13} also establish 
linear rate of convergence for ADMM under the assumption that the objective function takes a 
certain form of a strictly convex function and the step size for updating multiplier is 
sufficiently small. ADMM applied to a linear program was shown to 
converge at a global linear rate in~\cite{EckBer90}.  Boley~\cite{Bol14} analyzed the local 
rate of convergence for convex QPs~\eqref{qpform} with non-negativity under the assumption 
of an unique primal-dual solution and satisfaction of strict complementarity using a matrix recurrence technique.  
In~\cite{GhaTeiSha13}, the authors consider strictly convex QP with general inequality constraints 
which satisfy full row rank and establish global Q-linear rate of convergence using the matrix 
recurrence techniques of~\cite{Bol14} and also proposed an optimal ADMM parameter selection 
strategy.  This work was extended in~\cite{RagSte13ACC} where the authors relaxed the full row 
rank of inequality constraints and also proposed optimal ADMM parameter selection. However, 
the approach of~\cite{RagSte13ACC} results in general projection problems that are expensive to 
solve.  The work in~\cite{RagSte14MTNS} considered the solution of the QP in~\eqref{qpform}.  
However the proofs in that paper are incomplete and it does not prove that the convergence rate is 
bounded is strictly below 1.  The current paper presents an entirely new line of analysis to prove 
the same claims as in~\cite{RagSte14MTNS}.  Infeasibility detection in ADMM applied to QPs was described in a conference version by~\cite{RagSte14CDC} with sketches of the proofs.  This paper is meant to provide a comprehensive treatment of the initial developments.   

\subsection{Focus of this Work}\label{sec:ourwork}

In this work, we consider an ADMM formulation that alternates between solving an equality constrained QP and a projection on bound constraints. In particular we consider the following 
modification of the QP~\eqref{qpform},
\beq\bal
\min\limits_{\bfy,\bfw} \;& \half{\bfy^T\bfQ\bfy} + \bfq^T\bfy \\
\text{s.t.} \;& \bfA\bfy = \bfb, \bfw \in \bfYset\\
\;& \bfy = \bfw. 
\eal\label{qpformsplit}\eeq
In~\eqref{qpformsplit}, the equalities and inequalities involve separate variables, coupled by the constraint $\bfy = \bfw$. The augmented Lagrangian is defined as,
\[
L(\bfy,\bfw,\bflambda) := \half{\bfy^T\bfQ\bfy} + \bfq^T\bfy 
+\frac{\beta}{2}\left\|\bfy -\bfw - {\bflambda} \right\|^2 - \frac{\beta}{2}\|\bflambda\|^2
\]
where,  $\beta > 0$ is the ADMM parameter and we have used scaled multipliers $\beta\bflambda$ for the coupling constraints. 
 The ADMM iterations for~\eqref{qpformsplit} produces a sequence $\{\stackthr{\bfy^k}{\bfw^k}{\bflambda^k}\}$, where $\bfy^k$ always satisfies the equality constraints, $\bfw^k$ always lies within the bounds and $\beta\bflambda^k$ is the multiplier for 
bound constraints. Further, the ADMM parameter is kept fixed during the iterations. The advantage of this is that the ADMM iterations do not involve any matrix factorizations.  This 
results in simple iterations involving only matrix-vector products that can be easily implemented even in low computing power micro-controllers. 
When~\eqref{qpform} is feasible, we derive an upper bound on the rate of convergence under assumptions of:
\begin{itemize}
\item positive definiteness of the Hessian of the objective function projected on the null space of the equality constraints (reduced Hessian) 
\item linear independence constraint qualification (LICQ) holding at the solution. 
\end{itemize}
Let $\bfy^*$ denote the optimal solution to QP~\eqref{qpform}. 
We provide an explicit characterization of the rate of convergence in terms of 
\begin{itemize}
\item the eigenvalues of the reduced Hessian, 
\item the cosine of the Friedrichs 
angle~\cite{Deu01} (see Definition~9.4) between the subspace defined by the linear 
constraints and the subspace spanned by the gradients of active bound indices $\activeset^* = \{ i \;|\; \bfy^*_i = \underline{\bfy}_i \text{ or } \bfy^*_i = \overline{\bfy}_i\}$ and 
\item the ratio of the smallest distance from the bounds for inactive components to the distance from the solution, that is
\[
\frac{\min\limits_{i \notin \activeset^*} \; \min(\bfy^*_i-\underline{\bfy}_i,\overline{\bfy}_i-\bfy^*_i)}{\text{distance to the solution at current iterate}}.
\]  
\end{itemize}
Note that we do not require \emph{strict complementarity} to hold at the solution.  Active-set algorithms aim at the correct identification of the active components since they only work with a subset of the inequality constraints.  On the other hand, ADMM works with the entire set of inequalities.  Once the inactive components are correctly identified the analysis shows that positive definiteness of reduced Hessian and LICQ are sufficient to guarantee a rate of convergence that is bounded away from $1$.  When the inactive components are not correctly identified then 
there exists at least one $i \notin \activeset^*$ for which $\bfy^*_i = \underline{\bfy}_i$ { or } $\bfy^*_i =\overline{\bfy}_i$.  We exploit this to bound 
certain quantities in the analysis to yield a rate of convergence that is again bounded away from $1$.  Combining these observations yields the global Q-linear convergence result.  
In particular, the analysis shows that the iterates exhibit a two-scale rate of convergence - a 
slower rate of convergence for iterations prior to identification of the inactive indices at the 
solution  and a better rate of convergence once the inactive indices at the solution are identified.  
Numerical experiments are provided validating the theoretical analysis.

In the case of infeasible QPs, we show that 
the sequence of primal iterates $\{\stacktwo{\bfy^k}{\bfw^k}\}$ generated by ADMM converges to $\stacktwo{\bfy^\circ}{\bfw^\circ}$ with $\bfA\bfy^\circ = \bfb$, $\bfw^\circ \in \bfYset$ and $\|\bfy^\circ-\bfw^\circ\|$ is the minimum Euclidean distance between the hyperplane defined by the linear equality constraints and the convex set defined by the bounds.  
Further, we show that the sequence of multipliers $\{\bflambda^k\}$ diverges and that the divergence is restricted to a direction that lies in the range space of the equality constraints, in particular $\bfw^\circ-\bfy^\circ$.  Based on this analysis, we also propose a termination condition that recognizes when QP~\eqref{qpform} is infeasible.  

The outline of the paper is as follows.  \S\ref{sec:background} states relevant background including the assumptions, optimality conditions and infeasibility minimizer for the QP.  The ADMM formulation that we consider in the paper is presented in \S\ref{sec:admmform} and also states some properties of the ADMM iterates.  
The one-step rate of convergence analysis for feasible QPs is described in \S\ref{sec:onestepconvergence}. Q-linear convergence results for feasible QPs are provided in \S\ref{sec:convergencefeasibleqp}. \S\ref{sec:infeasqp} derives the results on the ADMM iterates when QP~\eqref{qpform} is infeasible. \S\ref{sec:numerical} presents numerical results and conclusions are provided in \S\ref{sec:conclusions}.

\subsection{Notation}

We explain the notation used in the rest of the paper below.
\begin{itemize}
\item $\R, \R_+$ denote the set of reals and set of non-negative reals,  respectively.  
$[a,b]$ denotes the closed interval $\{ x | a \leq x \leq b\}$ and $]a,b[$ denotes the 
open interval $\{x | a < x < b\}$. 
\item All vectors are assumed to be column vectors.  
For a vector $x \in \R^n$, $x^T$ denotes its transpose. 
The notation $\stacktwo{x}{y} := (x^T \; y^T)^T$ denotes the vertical stacking of the 
individual vectors.
\item $\bfe_{i}$ denotes the unit vector with $1$ at $i$-th component and $0$ elsewhere.  
\item $\symmats^n$ denotes the set of symmetric $n \times n$ matrices. 
For a matrix $X \in \symmats^{n}$,  $\lambda_i(X)$ for $i = 1,\ldots,n$ denote the 
eigenvalues of 
$X$ and $\lambda_{\text{min}}(X), \lambda_{\text{max}}(X)$ denote the minimum and maximum eigenvalues of $X$, respectively.
For a matrix $X \in \mathbb{S}^n$,  $X\succ 0$ $(X \succeq0)$ denotes 
matrix positive (semi)definiteness. For such a positive semidefinite matrix, 
$\|x\|^2_{X}$ denotes $x^TXx$. 
\item For a convex set $\bfYset \subseteq \R^n$, $\bfI_{\bfYset}(x)$ is the indicator function taking a value of $0$ for $x \in \bfYset$ and $+\infty$ otherwise.
\item For a convex set $\bfYset \subseteq \R^n$, $\proj_{\bfYset}(x)$ denotes the projection of 
$x$ onto the set $\bfYset$. $\proj_{\bfYset}$ is also called the \emph{proximal operator} of 
$\mathbb{I}_{\bfYset}$.  
For $X \in \R^{n\times n}$, $X\proj_{\bfYset}(x)$ denotes the  
product of matrix $X$ and result of the projection. 
\item We denote by $\bfI_n \in \R^{n \times n}$ the identity matrix, and 
$(\proj_{\bfYset}-\bfI_n)(x)$ denotes $\proj_{\bfYset}(x)-x$. 
\item We denote by $\refl_{\bfYset}(x)$ the reflection operator, 
$\refl_{\bfYset}(x) := (2\proj_{\bfYset} - \bfI_n)(x)$. It is also called the 
\emph{reflected proximal operator} of $\mathbb{I}_{\bfYset}$. 
\item The notation $\lambda \perp x \in \bfYset$ denotes the inequality 
$\lambda^T(x'-x) \geq 0,\forall x' \in \bfYset$, which is also called a 
{\em variational inequality}. 
\item We use $\|\cdot\|$ to denote the $2$-norm for vectors and matrices. 
\item A sequence $\{x^k\} \subset \R^n$ converging to $x^*$ is said to converge at 
\emph{Q-linear rate} if $\|x^{k+1} -x^*\| \leq \kappa\|x^k-x^*\|$ where $0 < \kappa < 1$.  
The sequence is said to converge at a R-linear rate if $\|x^k\ -x^*\| \leq \kappa^k$ for some 
$\{\kappa^k\} \rightarrow 0$ Q-linearly.
\item For the constraint matrix $\bfA \in \R^{m \times n}$ of full for row rank, 
$\bfR \in \R^{n \times m}, \bfZ \in \R^{n \times (n-m)}$ denote an orthonormal basis for the 
range space of $\bfA^T$, orthonormal basis for the null space of $\bfA$ respectively.  
\item For a $\beta > 0$, denote by 
\beq\bal
			&\;	\bfM :=\bfZ\left(\bfZ^T(\bfQ/\beta + \bfI_n)\bfZ\right)^{-1}\bfZ^T, 
\bfN :=(\bfI_n - \bfM\bfQ/\beta)\bfR(\bfA\bfR)^{-1}  \\
\text{and } 	&\;	\bfM_{\bfZ} := 2(\bfZ^T\bfQ\bfZ/\beta + \bfI_{n-m})^{-1} - \bfI_{n-m} = 
\bfZ^T(2\bfM - \bfI_n)\bfZ. 
\label{defMNMZ}
\eal\eeq
\item $\bfy, \bfw, \bflambda$ refer to ADMM variables that lies on equality constraints, 
in the constraint set and multipliers for the set inclusion constraints respectively.  
\item  $\bfy^*$ denotes optimal solution to QP~\eqref{qpform} and $\bflambda^*$ the 
optimal multipliers for set inclusion constraints. 
\item $\bfv$ refers to the Douglas-Rachford iterate and $\bfv^*$ is the fixed point of the 
Douglas-Rachford iteration.
\item $\bfu = \refl_{\bfYset}(\bfv)$ is the reflection of the Douglas-Rachford iterate $\bfv$.  
Similarly, $\bfu^* = \refl_{\bfYset}(\bfv^*)$ is the reflection of the fixed point. 
\item $\activeset^*$ denotes the indices of $\bfy^*$ that are at a bound i.e. 
$\activeset^* = \{ i \; | \; \bfy^*_i = \underline{\bfy}_i \text{ or } 
\overline{\bfy}_i \}$.  
\item $\activeset^k \subseteq \{1,\ldots,n\}$ denotes the active-set 
at iteration $k$.  
\item $\bfE^*$ and $\bfE^k$ denote the matrices of gradients of bound constraints that are 
active at the solution and at an iteration respectively.  
\item $c_F^*, c_F^k$ denotes the cosine of the Friedrich's angle between $\bfR$ and 
the subspaces spanned respectively by $\bfE^*, \bfE^k$.
\item $\delta(\|\bfM_{\bfZ}\|,c_F,\alpha^{\max})$ denotes the Q-linear convergence rate for the  
Douglas-Rachford iteration.
\end{itemize}

\section{Background} \label{sec:background}
In this section, we list the main assumptions and some key properties that are used in the 
rest of the paper.
\subsection{Main Assumptions}
We make the following assumptions on the QP~\eqref{qpform} throughout the paper.
\begin{assumption}\label{ass:feas} 
The set $\bfYset \neq \emptyset$ is non-empty.
\end{assumption}
\begin{assumption}\label{ass:rankA} The matrix $\bfA \in \R^{m\times n}$ has full row rank of $m$. 
\end{assumption}
\begin{assumption}\label{ass:redhesspd} The Hessian of objective function in~\eqref{qpform} 
is positive definite on the null space of the equality constraints, i.e., $\bfZ^T\bfQ\bfZ \succ 0$.
\end{assumption}

In subsequent sections, we make further assumptions on feasibility and linear independence of active constraint gradients at a solution of~\eqref{qpform}.

\subsection{Range and Null Spaces}
The orthonormal range and null space bases matrices $\bfR,\bfZ$ satisfy
\begin{subequations}\label{RZprops}
\beqar
\bfR^T\bfR = \bfI_m,\;\; \bfZ^T\bfZ & = & \bfI_{n-m}, \label{RZorthonormal} \\
\bfR^T\bfZ & = & 0 \label{RZorthogonal} \\
\bfR\bfR^T + \bfZ\bfZ^T & = & \bfI_n. \label{RZfullspace}
\eeqar
\end{subequations}
where~\eqref{RZorthogonal} follows from orthogonality of the range and null spaces, and~\eqref{RZfullspace} holds since $\begin{bmatrix}\bfR & \bfZ\end{bmatrix}$ is a basis for $\R^n$.

\subsection{Projection onto a Convex Set}
Given a convex set $\bfYset \subseteq \R^n$ we denote by $\proj_{\bfYset} : \R^n \rightarrow \bfYset$ the projection operator can be defined in two equivalent ways as,
\beq
\proj_{\bfYset}(\bfy) := \arg\min\limits_{\bfw \in \bfYset} \; \half \|\bfy - \bfw\|^2 \equiv 
\proj_{\bfYset}(\bfy) := \arg\min\limits_{\bfw} \mathbb{I}_{\bfYset}(\bfw) + \half \|\bfy - \bfw\|^2
\label{defprojoperator}\eeq
Thus, $\proj_{\bfYset} = (\mathbb{I}_{\bfYset} + \bfI_n)^{-1}$ 
is known as the \emph{proximal operator} of $\mathbb{I}_{\bfYset}$ and 
$\refl_{\bfYset}(\bfw) = (2\proj_{\bfYset}-\bfI_n)(\bfw)$ as the 
\emph{reflected proximal operator} of $\mathbb{I}_{\bfYset}$.
The operator $\proj_{\bfYset}$ satisfies the variational inequality,
\beq\left.\bal
\proj_{\bfYset}(\bfy) - \bfy - \bflambda = 0 \\
\bflambda \perp \proj_{\bfYset}(\bfy) \in \bfYset\eal\right\} \implies
(\proj_{\bfYset}(\bfy) - \bfy) \perp \proj_{\bfYset}(\bfy) \in \bfYset. 
\label{defprojasvi}\eeq
For all $\bfv,\;\bfv' \in \R^n$, the operators $\proj_{\bfYset}$ and $\refl_{\bfYset}$ satisfy 
(see for example,~\cite{Roc76}) :
\begin{subequations}\label{lemm:projops}
\beqar
(\proj_{\bfYset}(\bfv)-\proj_{\bfYset}(\bfv'))^T((\bfI_n-\proj_{\bfYset})(\bfv)-(\bfI_n-\proj_{\bfYset})(\bfv')) & \geq & 0 \label{lemm:projineq} \\
\|\stacktwo{\proj_{\bfYset}(\bfv)}{(\bfI_n-\proj_{\bfYset})(\bfv)} - 
\stacktwo{\proj_{\bfYset}(\bfv')}{(\bfI_n-\proj_{\bfYset})(\bfv')}\| & \leq & \|\bfv-\bfv'\| 
\label{lemm:projnonexpansive} \\
\|\refl_{\bfYset}(\bfv) - \refl_{\bfYset}(\bfv')\| & \leq & \|\bfv-\bfv'\| 
\label{lemm:reflineq}
\eeqar
\end{subequations}
where~\eqref{lemm:projineq},~\eqref{lemm:projnonexpansive} are 
called the \emph{firm-nonexpansiveness} of the proximal operator and~\eqref{lemm:reflineq} 
is called the \emph{non-expansiveness} of the reflected proximal operator.

\subsection{Optimality Conditions for QP}
We state below the optimality conditions~\cite{BoydVandenbergheBook} of QP~\eqref{qpform}.
The point $\bfy^*$ is an optimal solution of QP in~\eqref{qpform} if and only if 
there exist multipliers $\bfxi^* \in \R^m$ and $\bflambda^* \in \R^n$  satisfying,
\beq\bal
\bfQ\bfy^* + \bfA^T\bfxi^* - \bflambda^* &= -\bfq \\
\bfA\bfy^* &= \bfb \\
\bflambda^* &\perp \bfy^* \in \bfYset.
\eal\label{qpformstatconds}\eeq
We also refer to $\stackthr{\bfy^*}{\bfxi^*}{\bflambda^*}$ as a KKT point of~\eqref{qpform}. 
We denote by $\activeset^*$ the set of indices of $\bfy^*$ that lie at the bound 
\beq
\activeset^* = \{ i \;|\; \bfy^*_i = \underline{\bfy}_i \text{ or } \overline{\bfy}_i \}.
\eeq
Further, we denote by $\bfE^* \in \R^{n \times |\activeset^*|}$ the matrix corresponding to the gradients of the active bound constraints.  In other words,
\beq
\bfE^* = \begin{bmatrix} \bfe_{i_1} & \cdots & \bfe_{i_n} \end{bmatrix} \text{ with } \{i_1,\ldots,i_n\} = \activeset^*.
\label{defEstar}
\eeq

\subsection{Infeasible QP} 
Suppose Assumptions~\ref{ass:feas} and~\ref{ass:rankA} hold. Then, the QP in~\eqref{qpform} is infeasible if and only if 
\beq
\{\bfy \;|\; \bfA\bfy = \bfb \} \cap \bfYset = \emptyset. \label{nofeaspt}
\eeq
Further, there exists $\bfy^\circ$ feasible with respect to the 
linear constraints, and $\bfw^\circ \in \bfYset$, $\bfy^\circ \neq \bfw^\circ$  
satisfying,
\beq\bal
\stacktwo{\bfy^\circ}{\bfw^\circ} = \arg\min\limits_{\bfy,\bfw} &\; \half\|\bfy - \bfw\|^2 \\
\text{s.t.} &\; \bfA\bfy=\bfb,\; \bfw \in \bfYset.
\eal\label{infeasmin}\eeq
We refer to $\stackthr{\bfy^\circ}{\bfw^\circ}{\bflambda^\circ}$ as a KKT point of~\eqref{infeasmin}. 
It is easily seen from the optimality conditions of~\eqref{infeasmin} that 
\beq \left. \bal
\bfy^\circ-\bfw^\circ &\in\; \text{range}(\bfR) \\
\bfw^\circ - \bfy^\circ - \bflambda^{\circ} &=\; 0 \\
\bflambda^\circ &\perp\; \bfw^\circ \in \bfYset \eal\right\}
\implies 
\left\{ \bal \bfy^\circ-\bfw^\circ, \bflambda^\circ &\in\; \text{range}(\bfR)  \\
\bfw^\circ-\bfy^\circ &\perp\; \bfw^\circ \in \bfYset.
\eal\right.\label{diffywcirc}
\eeq 
Further, $\{\bfy \;|\; \bfA\bfy = \alpha\bfb+(1-\alpha)\bfA\bfw^\circ\}$, for any  $0 < \alpha < 1$ 
is a hyperplane separating the linear subspace defined by the equality constraints and the set $\bfYset$.  

\section{ADMM \& DR Formulation}\label{sec:admmform}

The steps of the ADMM iteration~\cite{Boyadmm} as applied to the formulation in~\eqref{qpformsplit} are:
\begin{subequations}
\beqar
\bfy^{k+1} 
&=&\; \bfM (\bfw^k + \bflambda^k - \tilde{\bfq}) + \bfN\bfb 
\label{admmiter_ext_y}\\
\bfw^{k+1} 
&=&\; \proj_{\bfYset}(\bfy^{k+1}-{\bflambda}^k) 
\label{admmiter_ext_w}\\
\bflambda^{k+1} &=&\; \bflambda^k + \bfw^{k+1}-\bfy^{k+1}\quad\quad\quad\quad\quad\quad
\label{admmiter_ext_lam}
\eeqar
\label{admmiter_ext}
\end{subequations}
where $\bfM,\bfN$ are as defined in~\eqref{defMNMZ}, and $\tilde{\bfq} = \bfq/\beta$. We can further eliminate $\bfy^{k+1}$ in \eqref{admmiter_ext} and obtain the iterations 
in condensed form as,
\beq\bal
\bfw^{k+1} =&\; \proj_{\bfYset}(\bfv^k) \\
{\bflambda}^{k+1} =&\; (\proj_{\bfYset}-\bfI_n)( \bfv^k)
\eal\label{admmiter}\eeq
where 
\beq
\bfv^k = \bfy^{k+1}-\bflambda^k = \bfM\bfw^{k}+(\bfM-\bfI_n){\bflambda}^k - \bfM\tilde{\bfq} + 
\bfN\bfb.\label{defvk}
\eeq
We can equivalently cast the ADMM iterations in~\eqref{admmiter} as iterations of the 
Douglas-Rachford (DR) method~\cite{EckBer92} as,
\beq\bal
\bfv^{k+1} 
		=&\; \half \left( (2\bfM-\bfI_n)\refl_{\bfYset}(\bfv^k) + \bfv^k \right) - \bfM\tilde{\bfq} + \bfN\bfb \\
		=&\; \half \left( (2\bfM-\bfI_n)\bfu^k + \bfv^k \right) - \bfM\tilde{\bfq} + \bfN\bfb.
\eal\label{drform}\eeq
where,
\beq
\bfu^k = \refl_{\bfYset}(\bfv^k). \label{defuk}
\eeq
We list some key properties relating ADMM and DR iterates that are 
used subsequently in the analysis.  These follow from the definitions 
in~\eqref{admmiter},~\eqref{defvk} and~\eqref{defuk}.
\beq\bal
\bfv^{k} 				=&\; \bfy^{k+1} - \bflambda^k &= \bfw^{k+1}-\bflambda^{k+1} \\
\bfu^{k} 			=&\; \bfw^{k+1}+\bflambda^{k+1} & \\
\bfu^{k}+\bfv^{k} 	=&\; 2\bfw^{k+1} & \\
-\bfu^{k}+\bfv^{k} 	=&\; -2\bflambda^{k+1}. &
\eal\label{relateadmmdr}\eeq

\subsection{Equivalence between Minima and Fixed points}
We state the equivalence between the minima of the QP in~\eqref{qpform} and the fixed 
points of the ADMM and DR iterations.  We omit the proof and refer the interested reader  
to~Lemma 25.1 in~\cite{BauCombook} to a proof based on operators or to Theorem~1 in~\cite{RagSte14MTNS}.
\begin{lemma}\label{thm:fixpointsminima}
Suppose the QP in~\eqref{qpform} an optimal solution $\bfy^*$ with multiplier 
$\bflambda^*$ for the set inclusion constraints.  
Then for any $\beta >0$, (a) $(\bfy^*,\bfy^*,\bflambda^*/\beta)$ is a fixed point of~\eqref{admmiter_ext} and (b) $\bfv^*$ is a fixed point of  the Douglas-Rachford iteration in~\eqref{drform} with $\bfu^* = \refl_{\bfYset}(\bfv^*)$ 
\beq
\bfv^* = \bfy^* - \bflambda^*
= \half \left( (2\bfM-\bfI_n)\bfu^* + \bfv^* \right) - \bfM\tilde{\bfq} + \bfN\bfb.
\label{defvstar}
\eeq 
\end{lemma}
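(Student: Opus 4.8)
The plan is to handle the two parts separately, with part (b) following from part (a) and the algebraic equivalence between~\eqref{admmiter_ext} and~\eqref{drform} established above. For part (a) I would substitute the candidate $(\bfw^k,\bflambda^k)=(\bfy^*,\bflambda^*/\beta)$ into the map~\eqref{admmiter_ext} and check that each output reproduces its input. The $\bfy$-update~\eqref{admmiter_ext_y} is the closed-form null-space solution of the strictly convex equality-constrained subproblem $\min_{\bfy}\,\half\bfy^T\bfQ\bfy+\bfq^T\bfy+\frac{\beta}{2}\|\bfy-\bfw^k-\bflambda^k\|^2$ subject to $\bfA\bfy=\bfb$, whose reduced Hessian $\bfZ^T(\bfQ/\beta+\bfI_n)\bfZ\succ 0$ (Assumption~\ref{ass:redhesspd}) makes its minimizer unique; it therefore suffices to verify that $\bfy^*$ solves this subproblem when $\bfw^k=\bfy^*$ and $\bflambda^k=\bflambda^*/\beta$. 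Its stationarity condition collapses to $\bfQ\bfy^*+\bfq-\bflambda^*+\bfA^T\bfxi^*=0$, which is precisely the stationarity line of~\eqref{qpformstatconds}, and $\bfA\bfy^*=\bfb$ holds by feasibility, so $\bfy^{k+1}=\bfy^*$.

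For the $\bfw$-update~\eqref{admmiter_ext_w} I would establish $\proj_{\bfYset}(\bfy^*-\bflambda^*/\beta)=\bfy^*$ through the variational-inequality characterization~\eqref{defprojasvi}: since $\bfy^*\in\bfYset$, it remains to check that the residual $\bfy^*-(\bfy^*-\bflambda^*/\beta)=\bflambda^*/\beta$ satisfies $(\bflambda^*/\beta)^T(\bfy'-\bfy^*)\geq 0$ for all $\bfy'\in\bfYset$, and this is exactly the complementarity $\bflambda^*\perp\bfy^*\in\bfYset$ in~\eqref{qpformstatconds} scaled by $1/\beta>0$. Hence $\bfw^{k+1}=\bfy^*$, and then the $\bflambda$-update~\eqref{admmiter_ext_lam} yields $\bflambda^{k+1}=\bflambda^*/\beta+\bfy^*-\bfy^*=\bflambda^*/\beta$, which completes part (a).

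For part (b) I would invoke the equivalence of~\eqref{admmiter_ext} with the Douglas--Rachford recursion~\eqref{drform} under $\bfv^k=\bfy^{k+1}-\bflambda^k$ together with the condensed form~\eqref{admmiter}. Because part (a) shows that the ADMM map leaves $(\bfy^*,\bfy^*,\bflambda^*/\beta)$ invariant, the associated iterate $\bfv^*=\bfy^{k+1}-\bflambda^k$ evaluated at this point is fixed by~\eqref{drform}, which is the relation~\eqref{defvstar}; the reflected point is $\bfu^*=\refl_{\bfYset}(\bfv^*)=2\proj_{\bfYset}(\bfv^*)-\bfv^*=2\bfy^*-\bfv^*$ since~\eqref{admmiter} gives $\proj_{\bfYset}(\bfv^*)=\bfw^{k+1}=\bfy^*$. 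I expect the only delicate step to be the $\bfy$-update in part (a): the clean route is to recognize~\eqref{admmiter_ext_y} as the unique minimizer of the equality-constrained subproblem and use its KKT conditions, rather than verifying the matrix identity $\bfM(\bfy^*+\bflambda^*/\beta-\tilde{\bfq})+\bfN\bfb=\bfy^*$ directly, which would require unfolding the projector structure of $\bfM$ and the definition of $\bfN$ in~\eqref{defMNMZ}.
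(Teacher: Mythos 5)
Your proof is correct, and it is essentially the direct-verification route the paper intends: the paper omits its own proof of this lemma, deferring either to an operator-theoretic argument (Lemma~25.1 of Bauschke--Combettes) or to a direct substitution argument in the earlier MTNS reference, and your argument --- plugging $(\bfy^*,\bfy^*,\bflambda^*/\beta)$ into each ADMM step, identifying the $\bfy$-update with the unique KKT point of the strictly convex equality-constrained subproblem, verifying the projection via the variational inequality~\eqref{defprojasvi}, and transferring the fixed point to~\eqref{drform} through the already-established algebraic equivalence --- is precisely that elementary verification, with the KKT route for the $\bfy$-step being the right way to avoid unfolding $\bfM$ and $\bfN$. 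The only discrepancy is cosmetic and lies in the paper, not in your argument: with the scaled-multiplier convention the fixed point is $\bfv^*=\bfy^*-\bflambda^*/\beta$, whereas the lemma's display writes $\bfv^*=\bfy^*-\bflambda^*$.
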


\subsection{Results on ADMM and DR Iterates}

In the following we state some key properties of the ADMM iterates that are used for the analysis in the subsequent sections. The first result shows that at every iteration of the ADMM algorithm the variational 
inequality in \eqref{qpformstatconds} holds between $\bfw^{k+1}$ and ${\bflambda}^{k+1}$. 
\begin{lemma}\label{lemm:varineqholds}
At every iteration of the ADMM algorithm $\bfw^{k+1},{\bflambda}^{k+1}$ in 
\eqref{admmiter_ext} satisfy 
$\bfw^{k+1}\in \bfYset \perp \bflambda^{k+1}$.
\end{lemma}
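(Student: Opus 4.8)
The plan is to observe that both parts of the claim follow directly from the definition of the two update steps in \eqref{admmiter_ext}, with no estimation or recurrence argument needed. The membership $\bfw^{k+1}\in\bfYset$ is immediate: by \eqref{admmiter_ext_w}, $\bfw^{k+1}$ is the image of $\bfy^{k+1}-\bflambda^k$ under $\proj_{\bfYset}$, and the projection operator maps into $\bfYset$ by definition \eqref{defprojoperator}. The variational inequality $\bflambda^{k+1}\perp\bfw^{k+1}$ is then just the optimality (first-order) characterization of that same projection, once the multiplier update is identified with the projection residual.

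First I would apply the variational inequality satisfied by the projection, namely \eqref{defprojasvi}, with the argument $\bfy=\bfy^{k+1}-\bflambda^k$. Since $\bfw^{k+1}=\proj_{\bfYset}(\bfy^{k+1}-\bflambda^k)$, this yields
\[
\bigl(\bfw^{k+1}-(\bfy^{k+1}-\bflambda^k)\bigr)\perp \bfw^{k+1}\in\bfYset,
\]
that is, $(\bfw^{k+1}-\bfy^{k+1}+\bflambda^k)^T(\bfy'-\bfw^{k+1})\ge 0$ for all $\bfy'\in\bfYset$. The next step is to recognize that the vector appearing as the ``multiplier'' in this inequality is exactly $\bflambda^{k+1}$: rearranging the update \eqref{admmiter_ext_lam} gives $\bflambda^{k+1}=\bflambda^k+\bfw^{k+1}-\bfy^{k+1}=\bfw^{k+1}-\bfy^{k+1}+\bflambda^k$. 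Substituting this identity into the displayed inequality immediately produces $\bflambda^{k+1}\perp\bfw^{k+1}\in\bfYset$, which combined with $\bfw^{k+1}\in\bfYset$ is the assertion of the lemma.

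There is essentially no analytic obstacle here; the content is a one-line bookkeeping match between the projection residual and the scaled-multiplier update. The only point requiring care is notational: one must track the sign convention and the direction of the relation $\bflambda\perp\bfx\in\bfYset$ as fixed in the Notation section, so that the residual $\bfw^{k+1}-(\bfy^{k+1}-\bflambda^k)$ produced by \eqref{defprojasvi} is paired with $\bfw^{k+1}$ in the correct order and is seen to coincide verbatim with $\bflambda^{k+1}$ rather than its negative. Verifying this alignment is the whole of the proof.
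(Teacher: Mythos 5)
Your proof is correct and follows the same route as the paper: the paper simply notes that the updates \eqref{admmiter_ext_w}--\eqref{admmiter_ext_lam} are exactly of the form \eqref{defprojasvi}, and your write-up just makes the substitution $\bfy=\bfy^{k+1}-\bflambda^k$ and the identification of the projection residual with $\bflambda^{k+1}$ explicit. The sign and ordering conventions are handled correctly, so nothing is missing.
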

\begin{proof} The updates for $\bfw^{k+1}, \bflambda^{k+1}$ are precisely of the form in~\eqref{defprojasvi} and hence, the claim holds.
\end{proof} 

The following lemma states the properties of $\bfM, \bfM_{\bfZ}$ defined in~\eqref{defMNMZ}.
\begin{lemma}\label{lemm:spectralMbnd}
Suppose Assumptions \ref{ass:rankA} and \ref{ass:redhesspd} hold. Then, 
$0 \preceq \bfM \prec \bfI_n$, and $-\bfI_{n-m} \prec \bfM_{\bfZ} \prec \bfI_{n-m}$.
\end{lemma}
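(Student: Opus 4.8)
The plan is to reduce both claims to statements about the eigenvalues of the $(n-m)\times(n-m)$ symmetric positive definite matrix obtained by compressing $\bfQ$ to the null space of $\bfA$. First I would set $H := \bfZ^T\bfQ\bfZ/\beta$ and note that Assumption~\ref{ass:redhesspd} together with $\beta > 0$ gives $H \succ 0$. Using the orthonormality relation $\bfZ^T\bfZ = \bfI_{n-m}$ from~\eqref{RZorthonormal}, the inner matrix in the definition of $\bfM$ simplifies to $\bfZ^T(\bfQ/\beta + \bfI_n)\bfZ = H + \bfI_{n-m}$, so that $\bfM = \bfZ(H+\bfI_{n-m})^{-1}\bfZ^T$. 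Since $H \succ 0$, the matrix $H+\bfI_{n-m}$ is symmetric positive definite with every eigenvalue strictly greater than $1$; writing its eigenvalues as $1+\mu_i$ with $\mu_i > 0$, the eigenvalues of $(H+\bfI_{n-m})^{-1}$ are $\gamma_i := 1/(1+\mu_i) \in\; ]0,1[$. This single spectral observation drives the whole proof.

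For $\bfM_{\bfZ}$ I would first confirm that the two given expressions agree: expanding $\bfZ^T(2\bfM-\bfI_n)\bfZ$ and applying $\bfZ^T\bfZ = \bfI_{n-m}$ twice reduces it to $2(H+\bfI_{n-m})^{-1}-\bfI_{n-m}$, matching the first definition. The eigenvalues of $\bfM_{\bfZ}$ are then exactly $2\gamma_i - 1$, and since $\gamma_i \in\; ]0,1[$ each lies in $]-1,1[$. Because $\bfM_{\bfZ}$ is symmetric this is equivalent to $-\bfI_{n-m}\prec\bfM_{\bfZ}\prec\bfI_{n-m}$, giving the second claim directly.

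The bounds on $\bfM$ require a little more care because $\bfM$ is singular: it has rank $n-m$ and annihilates $\text{range}(\bfR)$. The lower bound $0\preceq\bfM$ is immediate, since for any $\bfx\in\R^n$ we have $\bfx^T\bfM\bfx = (\bfZ^T\bfx)^T(H+\bfI_{n-m})^{-1}(\bfZ^T\bfx)\geq 0$ because $(H+\bfI_{n-m})^{-1}\succ 0$. For the strict upper bound I would decompose an arbitrary nonzero $\bfx$ using~\eqref{RZfullspace}, which yields $\|\bfx\|^2 = \|\bfR^T\bfx\|^2 + \|\bfZ^T\bfx\|^2$. Setting $\bfb := \bfZ^T\bfx$, the quadratic form is $\bfx^T\bfM\bfx = \bfb^T(H+\bfI_{n-m})^{-1}\bfb$, which is bounded above by $\max_i\gamma_i\,\|\bfb\|^2 < \|\bfb\|^2 \leq \|\bfx\|^2$ whenever $\bfb\neq 0$; and when $\bfb = 0$ but $\bfx\neq 0$ the form equals $0 < \|\bfR^T\bfx\|^2 = \|\bfx\|^2$. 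The only non-mechanical part is exactly this strictness bookkeeping: one must verify $\bfx^T\bfM\bfx < \bfx^T\bfx$ for \emph{every} nonzero $\bfx$, including those lying in the null space $\text{range}(\bfR)$ of $\bfM$, where the compression to $\bfZ$ contributes nothing and strictness instead comes for free from the $\|\bfR^T\bfx\|^2$ term rather than from the spectrum of $(H+\bfI_{n-m})^{-1}$. Combining the two cases gives $\bfM\prec\bfI_n$ and completes the proof.
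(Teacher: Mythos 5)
Your proof is correct and follows essentially the same route as the paper: both arguments reduce everything to the spectrum of $\bfZ^T\bfM\bfZ = (\bfZ^T\bfQ\bfZ/\beta + \bfI_{n-m})^{-1}$, whose eigenvalues lie in $]0,1[$, combined with the fact that $\bfM$ annihilates $\text{range}(\bfR)$ to get $0 \preceq \bfM \prec \bfI_n$, and then read off the bounds on $\bfM_{\bfZ}$. Your explicit case split for vectors with $\bfZ^T\bfx = 0$ is a slightly more careful rendering of the strictness of $\bfM \prec \bfI_n$ than the paper's one-line appeal to $\bfR^T\bfM\bfR = 0$, but it is the same idea.
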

\begin{proof}
From~\eqref{defMNMZ}, the eigenvalues of $\bfZ^T\bfM\bfZ$ are given by 
$(\lambda_i(\bfZ^T\bfQ\bfZ)/\beta + 1)^{-1}$. Since $\beta >0$ and $\bfZ^T\bfQ\bfZ \succ 0$ by Assumption \ref{ass:redhesspd} we have that,
 $0 < (\lambda_i(\bfZ^T\bfQ\bfZ)/\beta + 1)^{-1} < 1 
\implies 0 \prec \bfZ^T\bfM\bfZ \prec \bfI_{n-m}$.
Since $\bfR^T\bfM\bfR = 0$ 
by~\eqref{RZorthogonal} we have that $0 \preceq \bfM \prec \bfI_n$ proving the first claim.  
From the definition of $\bfM_{\bfZ}$ in~\eqref{defMNMZ} and the first claim we have that 
the second claim holds as well.
\end{proof}

\begin{lemma}\label{lemm:itervwnonexpansive} Suppose that 
$\stacktwo{\bfw^{k}}{{\bflambda}^{k}}$, $\stacktwo{\bfw^{j}}{\bflambda^{j}}$ be iterates produced by 
\eqref{admmiter}. Then, 
\beq
\|\bfv^k-\bfv^j\| \leq \|\stacktwo{\bfw^k}{{\bflambda}^k} - \stacktwo{\bfw^j}{{\bflambda}^j}\|. \label{ubnddiffv}
\eeq
\end{lemma}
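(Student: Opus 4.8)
The plan is to reduce the claim to a spectral bound on a single matrix built from $\bfM$. Writing $\Delta\bfw := \bfw^k - \bfw^j$ and $\Delta\bflambda := \bflambda^k - \bflambda^j$, I would first subtract the two instances of the definition~\eqref{defvk} of $\bfv^k$; the constant terms $-\bfM\tilde{\bfq} + \bfN\bfb$ cancel and leave
\[
\bfv^k - \bfv^j = \bfM\,\Delta\bfw + (\bfM - \bfI_n)\,\Delta\bflambda = \begin{bmatrix} \bfM & \bfM - \bfI_n \end{bmatrix} \stacktwo{\Delta\bfw}{\Delta\bflambda}.
\]
Since the right-hand side of~\eqref{ubnddiffv} is exactly $\|\stacktwo{\Delta\bfw}{\Delta\bflambda}\|$, the entire lemma is equivalent to showing that the $n \times 2n$ matrix $\bfB := \begin{bmatrix} \bfM & \bfM - \bfI_n \end{bmatrix}$ has spectral norm at most one. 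Note that this uses only the definition of the iterates, not the variational-inequality structure of Lemma~\ref{lemm:varineqholds}.

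To bound $\|\bfB\|$, I would use that $\bfM$ is symmetric (it is of the form $\bfZ(\cdot)\bfZ^T$ with a symmetric middle factor, by~\eqref{defMNMZ}), so that $\bfB\bfB^T = \bfM^2 + (\bfM - \bfI_n)^2$. Expanding and using that $\bfM$ commutes with $\bfI_n - \bfM$ gives the identity
\[
\bfB\bfB^T = 2\bfM^2 - 2\bfM + \bfI_n = \bfI_n - 2\bfM(\bfI_n - \bfM).
\]
The key observation is that both $\bfM$ and $\bfI_n - \bfM$ are polynomials in the single symmetric matrix $\bfM$, so $\bfB\bfB^T$ is as well; its eigenvalues are obtained by evaluating the scalar map $\mu \mapsto 1 - 2\mu(1-\mu)$ at the eigenvalues $\mu$ of $\bfM$.

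By Lemma~\ref{lemm:spectralMbnd} we have $0 \preceq \bfM \prec \bfI_n$, i.e. every eigenvalue $\mu$ of $\bfM$ satisfies $0 \le \mu < 1$; hence $2\mu(1-\mu) \ge 0$ and therefore $1 - 2\mu(1-\mu) \le 1$. This shows $\bfB\bfB^T \preceq \bfI_n$, so $\|\bfB\|^2 = \lambda_{\max}(\bfB\bfB^T) \le 1$, and the inequality $\|\bfv^k - \bfv^j\| = \|\bfB\,\stacktwo{\Delta\bfw}{\Delta\bflambda}\| \le \|\stacktwo{\Delta\bfw}{\Delta\bflambda}\|$ follows.

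I do not expect a genuine obstacle here; the only points requiring care are the sign in the expansion of $(\bfM - \bfI_n)^2$ and the recognition that the Gram matrix $\bfB\bfB^T$ collapses to a scalar function of $\bfM$, which is precisely what lets the spectral bound of Lemma~\ref{lemm:spectralMbnd} transfer through verbatim via the elementary scalar inequality $2\mu(1-\mu)\ge 0$ on $[0,1)$. It is worth remarking that the bound is tight: on $\text{range}(\bfR)$, where $\bfM$ vanishes, the scalar map attains the value $1$, which is consistent with $\bfB$ being only nonexpansive rather than a strict contraction.
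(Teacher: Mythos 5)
Your proof is correct, and it takes a genuinely cleaner route than the paper's. The paper proceeds by expanding $\|\bfM(\bfw^k-\bfw^j)-(\bfI_n-\bfM)(\bflambda^k-\bflambda^j)\|^2$ and disposing of the cross term $-2(\bfw^k-\bfw^j)^T\bfM(\bfI_n-\bfM)(\bflambda^k-\bflambda^j)$ with a Young-type estimate in the $\bfM(\bfI_n-\bfM)$-seminorm; collecting terms then gives the intermediate weighted bound $\|\bfw^k-\bfw^j\|^2_{\bfM}+\|\bflambda^k-\bflambda^j\|^2_{\bfI_n-\bfM}$, and the claim follows since $\bfM\preceq\bfI_n$ and $\bfI_n-\bfM\preceq\bfI_n$. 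You instead package the same linear map as the single matrix $\bfB=\begin{bmatrix}\bfM & \bfM-\bfI_n\end{bmatrix}$ and compute its output Gram matrix exactly, $\bfB\bfB^T=\bfI_n-2\bfM(\bfI_n-\bfM)\preceq\bfI_n$, so the cross term never appears. Both arguments rest on precisely the same input from Lemma~\ref{lemm:spectralMbnd}, namely $0\preceq\bfM\prec\bfI_n$ and hence $\bfM(\bfI_n-\bfM)\succeq 0$, together with the symmetry of $\bfM$ (which you correctly note follows from~\eqref{defMNMZ} and which the paper also uses implicitly). What your version buys: it is shorter, it yields the exact operator norm $\|\bfB\|=\bigl(1-2\min_i\mu_i(1-\mu_i)\bigr)^{1/2}$ rather than just an upper bound, and it makes transparent why the estimate cannot be improved to a strict contraction --- on $\text{range}(\bfR)$, where $\bfM$ vanishes, the norm equals $1$. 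The paper's expansion produces the weighted intermediate inequality as a by-product, but since that estimate is not reused elsewhere, nothing is lost by your shortcut.
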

\begin{proof}
Squaring the left hand side of~\eqref{ubnddiffv},
\beq\bal
&\; \|\bfv^k-\bfv^j\|^2  \\
=&\; \|\bfM(\bfw^k-\bfw^j) - (\bfI_n-\bfM)({\bflambda}^k-{\bflambda}^j)\|^2 \\
=&\; \|\bfw^k-\bfw^j\|^2_{\bfM^2} + \|{\bflambda}^k-{\bflambda}^j\|^2_{(\bfI_n-\bfM)^2}  - 2(\bfw^k-\bfw^j)^T\bfM (\bfI_n-\bfM)({\bflambda}^k-{\bflambda}^j) \\
\leq&\;\|\bfw^k-\bfw^j\|^2_{\bfM^2} + \|{\bflambda}^k-{\bflambda}^j\|^2_{(\bfI_n-\bfM)^2}  
+ \|\bfw^k-\bfw^j\|^2_{\bfM (\bfI_n-\bfM)} + \|{\bflambda}^k-{\bflambda}^j\|^2_{\bfM (\bfI_n-\bfM)} \\
\leq&\; \|\bfw^k-\bfw^j\|^2_{\bfM}+\|{\bflambda}^k-{\bflambda}^j\|^2_{(\bfI_n-\bfM)} \\
{\leq}&\; \|\bfw^k-\bfw^j\|^2 + \|{\bflambda}^k-{\bflambda}^j\|^2 
\eal\label{ubnddiffv1}\eeq
where the equality is from~\eqref{defvk}, the second equality is a simple expansion of the terms.  
The first inequality follows from
\[\bal
&\;\bfM(\bfI_n-\bfM) \succeq 0 &&\text{   (since $0 \preceq \bfM \prec \bfI_n$ by Lemma~\ref{lemm:spectralMbnd})}\\
\implies&\; \|\bfw^k-\bfw^j + ({\bflambda}^k-{\bflambda}^j)\|^2_{\bfM(\bfI_n-\bfM)} &&\geq 0 \\
\implies&\;  -2(\bfw^k-\bfw^j)^T\bfM (\bfI_n-\bfM)({\bflambda}^k-{\bflambda}^j) &&\leq \|\bfw^k-\bfw^j\|^2_{\bfM (\bfI_n-\bfM)} \\ &\; &&\;\;\;\; 
+ \|{\bflambda}^k-{\bflambda}^j\|^2_{\bfM (\bfI_n-\bfM)}.
\eal\]
The second inequality in~\eqref{ubnddiffv1} follows by collecting terms and the final inequality holds since 
$0 \preceq \bfM \prec \bfI_{n}$ (Lemma~\ref{lemm:spectralMbnd}). Hence, the claim holds.
\end{proof}

Next, we list a number of properties satisfied by the iterates~\eqref{admmiter}.
\begin{lemma}\label{lemm:iternonexpansive} Suppose that 
$\stacktwo{\bfw^{k+1}}{{\bflambda}^{k+1}}$, $\stacktwo{\bfw^{j+1}}{\bflambda^{j+1}}$ be iterates produced by 
\eqref{admmiter} from $\stacktwo{\bfw^k}{\bflambda^k}$, $\stacktwo{\bfw^j}{\bflambda^j}$ respectively. Then, the 
following hold:
\begin{enumerate}[(i)]
\item $\|\bfv^{k+1}-\bfv^{j+1}\| \leq \|\stacktwo{\bfw^{k+1}}{\bflambda^{k+1}} - \stacktwo{\bfw^{j+1}}{\bflambda^{j+1}}\|$\label{bndv}
\item $\|\stacktwo{\bfw^{k+1}}{{\bflambda}^{k+1}}-\stacktwo{\bfw^{j+1}}{{\bflambda}^{j+1}}\| \leq  \|\bfv^{k}-\bfv^{j}\|$\label{bnditersbyv}
\item $\|\stacktwo{\bfw^{k+1}}{{\bflambda}^{k+1}}-\stacktwo{\bfw^{j+1}}{{\bflambda}^{j+1}}\| \leq  
\|\stacktwo{\bfw^{k}}{{\bflambda}^{k}}-\stacktwo{\bfw^{j}}{{\bflambda}^{j}}\|$ \label{noninciters}
\item $\|\bfv^{k+1}-\bfv^{j+1}\| \leq \|\bfv^{k}-\bfv^{j}\|$. \label{eqmvk}
\end{enumerate}
\end{lemma}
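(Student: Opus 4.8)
The four claims form a short chain of inequalities assembled from the non-expansiveness properties already in hand, so the plan is to establish them in the order (i), (ii), (iii), (iv), each time reusing an earlier result.

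First, claim~(i) is nothing but Lemma~\ref{lemm:itervwnonexpansive} applied with the index shift $k \to k+1$ and $j \to j+1$. Since $\stacktwo{\bfw^{k+1}}{\bflambda^{k+1}}$ and $\stacktwo{\bfw^{j+1}}{\bflambda^{j+1}}$ are themselves iterates produced by~\eqref{admmiter}, and $\bfv^{k+1}, \bfv^{j+1}$ are defined from them through~\eqref{defvk}, the bound $\|\bfv^{k+1}-\bfv^{j+1}\| \leq \|\stacktwo{\bfw^{k+1}}{\bflambda^{k+1}} - \stacktwo{\bfw^{j+1}}{\bflambda^{j+1}}\|$ follows immediately with no new computation.

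The substantive step is claim~(ii), where the firm-nonexpansiveness of the proximal operator enters. From the condensed updates~\eqref{admmiter} I would write $\bfw^{k+1} = \proj_{\bfYset}(\bfv^k)$ and $\bflambda^{k+1} = (\proj_{\bfYset}-\bfI_n)(\bfv^k) = -(\bfI_n - \proj_{\bfYset})(\bfv^k)$, and similarly for index $j$, then apply~\eqref{lemm:projnonexpansive} with $\bfv = \bfv^k$, $\bfv' = \bfv^j$ to bound the difference of the stacked vectors $\stacktwo{\proj_{\bfYset}(\bfv^k)}{(\bfI_n-\proj_{\bfYset})(\bfv^k)}$ by $\|\bfv^k-\bfv^j\|$. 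The only point needing care is that $\bflambda^{k+1}$ equals the \emph{negative} of the lower block $(\bfI_n-\proj_{\bfYset})(\bfv^k)$; since flipping the sign of the lower block of a stacked vector leaves its Euclidean norm invariant, $\|\stacktwo{\bfw^{k+1}}{\bflambda^{k+1}} - \stacktwo{\bfw^{j+1}}{\bflambda^{j+1}}\|$ coincides with the quantity bounded by~\eqref{lemm:projnonexpansive}, which gives claim~(ii). I expect this sign-flip observation to be the only point requiring explicit justification in the whole proof.

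Finally, claims~(iii) and~(iv) are obtained by concatenating the first two bounds. Claim~(iii) follows by chaining claim~(ii) with Lemma~\ref{lemm:itervwnonexpansive}, namely $\|\stacktwo{\bfw^{k+1}}{\bflambda^{k+1}} - \stacktwo{\bfw^{j+1}}{\bflambda^{j+1}}\| \leq \|\bfv^k-\bfv^j\| \leq \|\stacktwo{\bfw^k}{\bflambda^k} - \stacktwo{\bfw^j}{\bflambda^j}\|$; and claim~(iv) follows by chaining claim~(i) with claim~(ii), namely $\|\bfv^{k+1}-\bfv^{j+1}\| \leq \|\stacktwo{\bfw^{k+1}}{\bflambda^{k+1}} - \stacktwo{\bfw^{j+1}}{\bflambda^{j+1}}\| \leq \|\bfv^k-\bfv^j\|$. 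Because every statement reduces to reindexing or a direct application of previously proved inequalities, no genuinely new estimate is needed beyond the firm-nonexpansiveness already invoked.
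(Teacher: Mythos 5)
Your proposal is correct and follows essentially the same route as the paper: (i) is Lemma~\ref{lemm:itervwnonexpansive} with shifted indices, (ii) comes from the updates in~\eqref{admmiter} together with firm non-expansiveness~\eqref{lemm:projnonexpansive}, and (iii), (iv) are obtained by chaining (i) and (ii). Your explicit remark that $\bflambda^{k+1}$ is the negative of the block $(\bfI_n-\proj_{\bfYset})(\bfv^k)$, so the sign flip leaves the norm unchanged, is a small but welcome clarification that the paper leaves implicit.
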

\begin{proof} 
The inequality in~\eqref{bndv} follows from Lemma~\ref{lemm:itervwnonexpansive}.  From~\eqref{admmiter} and the firm non-expansiveness property~\eqref{lemm:projnonexpansive}, 
we have that~\eqref{bnditersbyv} holds. 
The inequality in~\eqref{noninciters} is obtained by applying the result in~\eqref{bndv} to the 
right hand side of~\eqref{bnditersbyv}.  
The inequality in~\eqref{eqmvk} follows from~\eqref{bndv}-\eqref{bnditersbyv}.
\end{proof}

\section{Feasible QPs - One-Step Convergence Analysis}\label{sec:onestepconvergence}

In this section we analyze the progress of the DR iterates in~\eqref{drform} to a solution over a 
single iteration.  
In particular we analyze the rate of convergence of the sequence $\{\bfv^k-\bfv^*\}$,
\beq
\bfv^{k+1} - \bfv^* 
	= \half \left( (2\bfM - \bfI_n)(\bfu^{k}-\bfu^*) + \bfv^k - \bfv^* \right).
\label{convrate}\eeq
Introducing, $\Delta\bfu^k = \bfu^k-\bfu^*$, $\Delta\bfv^k = \bfv^k-\bfv^*$ and 
the definition of $\bfM_{\bfZ}$ in~\eqref{defMNMZ} rewrite~\eqref{convrate} as,
\beq\bal
		\Delta\bfv^{k+1} =&\; \half \left( (\bfZ\bfM_{\bfZ}\bfZ^T\Delta\bfu^k + \bfZ\bfZ^T\Delta\bfv^k) + \bfR\bfR^T(-\Delta\bfu^k+\Delta\bfv^k) \right) \\
\implies	\|\Delta\bfv^{k+1}\|^2 =&\; \frac{1}{4}\left( \|\bfM_{\bfZ}\bfZ^T\Delta\bfu^k + \bfZ^T\Delta\bfv^k\|^2 + \|\bfR^T(-\Delta\bfu^k+\Delta\bfv^k)\|^2 \right) \\
					      \leq&\; \frac{1}{4}\left( \left(\|\bfM_{\bfZ}\| \|\bfZ^T\Delta\bfu^k\| + \|\bfZ^T\Delta\bfv^k\|\right)^2 +  \|\bfR^T(-\Delta\bfu^k+\Delta\bfv^k)\|^2 \right) \\
					      	\leq&\; \frac{1}{4}\left( \left(\|\bfM_{\bfZ}\| \zeta^k_u + \zeta^k_v\right)^2 \|\Delta \bfv^k\|^2 + \|\bfR^T(-\Delta\bfu^k+\Delta\bfv^k)\|^2
					      	 \right)
\eal\label{convrate0}\eeq
 where the first inequality in~\eqref{convrate0} follows from the triangle inequality and the second inequality follows from the definition of $\zeta_u^k, \zeta_v^k$ below, 
\beq
\zeta^k_{u} = \frac{\|\bfZ^T\Delta\bfu^k\|}{\|\Delta\bfv^k\|}, \zeta^k_{v} = \frac{\|\bfZ^T\Delta\bfv^k\|}{\|\Delta\bfv^k\|}. \label{defZDeltauv}
\eeq
The right hand side in~\eqref{convrate0} can be rewritten using~\eqref{relateadmmdr} as,
\beq\bal
\|\Delta \bfv^{k+1}\|^2 \leq&\; \frac{1}{4}\left( \left(\|\bfM_{\bfZ}\| \zeta^k_u + \zeta^k_v\right)^2 \|\Delta \bfv^k\|^2 + 4\|\bfR^T(\bflambda^{k+1}-\bflambda^*)\|^2 \right) \\
\leq&\; \frac{1}{4}\left( \left(\|\bfM_{\bfZ}\| \zeta^k_u + \zeta^k_v\right)^2  + 
4(c^k \alpha^k)^2 \right)	\|\Delta \bfv^k\|^2	
\eal\label{convrate1}\eeq
\text{where}, $c^k, \alpha^k$ are defined as,
\beq
c^k = \frac{|\bfR^T(\bflambda^{k+1}-\bflambda^*)|}{\|\bflambda^{k+1}-\bflambda^*\|} \leq 1 
\text{ and } \alpha^k = \frac{\|\bflambda^{k+1}-\bflambda^*\|}{\|\Delta\bfv^k\|} \leq 1	
\label{defckalphak}\eeq
where the bound on $c^k$ follows from Cauchy-Schwarz and orthonormality of $\bfR$ 
in~\eqref{RZorthonormal} and the bound on $\alpha^k$ follows from 
Lemma~\ref{lemm:iternonexpansive}\eqref{bnditersbyv}.  

Our primary objective in this section is to show that the right hand side of~\eqref{convrate1} 
yields a contraction.  To motivate the difficulty, consider 
\[
\sup\limits_{\zeta^k_u,\zeta^k_v,\alpha^k \in [0,1]} \; \frac{1}{4}\left( \left(\|\bfM_{\bfZ}\| \zeta^k_u + \zeta^k_v\right)^2  + 
4(c^k \alpha^k)^2 \right)
\]
where the obvious bounds on the iteration related quantities, $0 \leq \zeta^k_u,\zeta^k_v,
\alpha^k \leq 1$, have been employed.   It is easy to see that the supremum of $1$ is 
attained if:
\begin{itemize}
\item $c^k = 1$ and we choose $\zeta_u^k = \zeta_v^k = 0, \alpha^k= 1$
\item $c^k < 1$, $\|\bfM_{\bfZ}\| \geq \sqrt{1 - (c^k)^2}$ and we choose  
$\zeta_u^k = \sqrt{1 - (c^k)^2}/\|\bfM_{\bfZ}\|$, $\zeta^k_v = \sqrt{1 - (c^k)^2}$, $\alpha^k  = 1$.
\end{itemize}
Thus, the obvious bounds on $\zeta^k_u,\zeta^k_v,\alpha^k$ are not sufficient 
to obtain a contraction in~\eqref{convrate1}.  In this section, we show that for~\eqref{convrate1} 
to be a contraction it is sufficient that either $\alpha^k < 1$ or $c^k < 1$. 
To do this we derive additional inequalities relating $\zeta^k_u,\zeta^k_v,\alpha^k$.  

The roadmap of the analysis is as follows.  We introduce a notion of \emph{active-set} in 
\S\ref{sec:activeset} and use this to derive a more generic bound on $c^k$ in terms of 
cosine of the Friedrich's angle between subspaces.    
The range space term is  
bounded above in \S\ref{sec:Rtermbnd}.  
\S\ref{sec:bndzetafrom0} derives lower bounds on the null space quantities. 
Finally, \S\ref{sec:worstCaseBnd} derives the worst-case convergence factor 
and shows that it is indeed $< 1$ as long as $\alpha^k < 1$ or $c^k < 1$. 

\subsection{Active-set}\label{sec:activeset}

We denote the \emph{active-set} at iterate $k$ as $\activeset^k$ and define it as,
\beq
\activeset^k = \{ i \;|\; -\bflambda^{k+1}_i + \bflambda^*_i   \neq 0\} 
\overset{\eqref{relateadmmdr}}{=} \{ i \;|\; 
-\Delta\bfu^k_i + \Delta\bfv^k_i \neq 0 \}. \label{defAk}
\eeq
Since $\bfw^{k+1},\bflambda^{k+1}$ satisfy the variational inequality 
(Lemma~\ref{lemm:varineqholds}), we must have $\bfw^{k+1}_i \in ]\underline{\bfy}_i,
\overline{\bfy}_i[$ and $\bflambda^{k+1}_i = 0$ for all $i \notin \activeset^*$.  
Thus,~\eqref{defAk} implies that 
$\{1,\ldots,n\}\setminus\activeset^* \subseteq \{1,\ldots,n\}
\setminus\activeset^k$. 
Denote by $\bfE^k \in \R^{n \times |\activeset^k|}$ a matrix that is defined as,
\beq
\bfE^k = \begin{bmatrix} \bfe_{i_1} & \cdots & \bfe_{i_p} \end{bmatrix} \text{ where } {i_j} \in \activeset^k. \label{defEk}
\eeq
From the above definition we have that, 
$\bflambda^{k+1}-\bflambda^* \in \text{range}(\bfE^k)$.  
Further, if we denote by $c_F^k$ the cosine of 
the Friedrich's angle between $\bfR$ and $\bfE^k$ we have that,
\beq
c_F^k = \|\bfR^T\bfE^k\| \geq c^k. \label{defcFk}
\eeq
Note that $c^k_F$ may not be strictly less than $1$ for all active sets.  

\subsection{Bounding the Range Space Term in~\eqref{convrate1}}\label{sec:Rtermbnd}

The range space term in~\eqref{convrate1} is bounded in two ways.  Firstly, 
from~\eqref{relateadmmdr} we obtain,
\[
\|\bfR^T(-\Delta\bfu^k+\Delta\bfv^k)\| = 2\|\bfR^T(\bflambda^{k+1}-\bflambda^*)\| 
							\leq 2c_F^k\alpha^k\|\Delta\bfv^k\|
\label{ubOnRspaceterm1}\]
where the inequality follows from~\eqref{defckalphak} and~\eqref{defcFk}.  On the other hand, we can also use the triangle inequality to obtain another upper bound as,
\[\bal
\|\bfR^T(-\Delta\bfu^k+\Delta\bfv^k)\| \leq&\; \|\bfR^T\Delta\bfu^k\| + \|\bfR^T\Delta\bfv^k\| \\
							\leq&\; \left(\frac{\|\bfR^T\Delta\bfu^k\|}{\|\Delta\bfv^k\|} + \sqrt{1-(\zeta^k_v)^2} \right) \|\Delta\bfv^k\|
\eal\label{ubOnRspaceterm2}\]
where the first inequality follows from definition of $\zeta^k_v$ in~\eqref{defZDeltauv}. 
Since $\|\Delta\bfu^k\| \leq \|\Delta\bfv^k\|$ by non-expansive property of reflected 
proximal operator~\eqref{lemm:reflineq}, 
\beq\bal
\|\Delta\bfu^k\|^2 = \|\bfR^T\Delta\bfu^k\|^2 + \|\bfZ^T\Delta\bfu^k\|^2 \leq&\; \|\Delta\bfv^k\|^2 \\
\implies \|\bfR^T\Delta\bfu^k\| \leq&\; \sqrt{1-(\zeta^k_u)^2} \|\Delta\bfv^k\|
\eal\label{bndRDeltauByDeltav}\eeq
where the second inequality follows by the definition of $\zeta^k_u$ in~\eqref{defZDeltauv} 
and taking the square root.  Hence, the range-space term 
in~\eqref{convrate1} can be bounded as,
\beq\bal
&\; 2\|\bfR^T(-\bflambda^{k+1}+\bflambda^*)\| \leq \gamma^k \|\Delta\bfv^k\| \\
\text{ where, } &\; \gamma^k = \min\left( 2c_F^k\alpha^k , \sqrt{1-(\zeta^k_u)^2} + \sqrt{1-(\zeta^k_v)^2} \right).\label{ubOnRspaceterm}
\eal\eeq

\subsection{Lower Bound on \texorpdfstring{$(\zeta^k_u+\zeta^k_v)$}{}}\label{sec:bndzetafrom0}

\begin{subequations}
From~\eqref{relateadmmdr} we have that,
\beq\bal
-\Delta\bfu^{k}+\Delta\bfv^{k} =&\; 2(-\bflambda^{k+1}+\bflambda^*) \\
\implies 
\|\bfZ^T(-\Delta\bfu^k+\Delta\bfv^k)\| =&\; 2\|\bfZ^T(-\bflambda^{k+1}+\bflambda^*)\|. 
\eal\label{lbOnzetauzetav0}\eeq
The right hand side can be lower bounded as,
\beq\bal
\|\bfR^T(\bflambda^{k+1}-\bflambda^*)\|^2 + \|\bfZ^T(\bflambda^{k+1}-\bflambda^*)\|^2 
=&\; \|\bflambda^{k+1}-\bflambda^*\|^2 \\
\implies \|\bfZ^T(\bflambda^{k+1}-\bflambda^*)\|^2 =&\; \left( 1 - (c^k)^2 \right) 
\|\bflambda^{k+1}-\bflambda^*\|^2 \\
\geq&\; (1 - (c_F^k)^2)(\alpha^k)^2 \|\Delta\bfv^k\|^2
\eal\label{lbrhsbndlb}\eeq 
where, the implication in the above follows from rearranging and substitution of $c^k$ 
in~\eqref{defckalphak}. The inequality follows from~\eqref{defcFk} and definition of $\alpha^k$ 
in~\eqref{defckalphak}. 
The left hand side in~\eqref{lbOnzetauzetav0} can be upper bounded using the triangle 
inequality as,
\beq
\|\bfZ^T(-\Delta\bfu^k+\Delta\bfv^k)\| \leq \|\bfZ^T\Delta\bfu^k\| + \|\bfZ^T\Delta\bfv^k\| = (\zeta^k_u + \zeta^k_v)\|\Delta\bfv^k\| \label{lblbsbndub}
\eeq
\end{subequations}
where the equality is by~\eqref{defZDeltauv}. Substituting~\eqref{lbrhsbndlb},~\eqref{lblbsbndub} in~\eqref{lbOnzetauzetav0}, we obtain 
\beq
\zeta^k_u + \zeta^k_v \geq 2\sqrt{1 - (c_F^k)^2}\alpha^k. \label{lbOnzetauzetav}
\eeq

\subsection{Worst-case Bound on Convergence Rate}\label{sec:worstCaseBnd}

Using the inequalities~\eqref{ubOnRspaceterm} and \eqref{lbOnzetauzetav}, we can 
define the worst-case convergence rate in~\eqref{convrate1} as
\beq\bal
\delta(\|\bfM_{\bfZ}\|,c_F,\alpha^{\max})^2 = \sup\limits_{\zeta_u,\zeta_v,\alpha,\gamma} &\; \frac{1}{4}\left( \left(\|\bfM_{\bfZ}\|\zeta_u + \zeta_v\right)^2 + \gamma^2 \right) \\
\mbox{s.t.} &\; (\zeta_u + \zeta_v)^2 \geq 4(1 - c_F^2)\alpha^2  \\
&\; \gamma^2 \leq 4c_F^2\alpha^2  \\
&\; \gamma^2 \leq (\sqrt{1-\zeta_u^2} + \sqrt{1-\zeta_v^2})^2 \\
&\; 0 \leq \zeta_u,\zeta_v \leq 1, 0 \leq \alpha  \leq \alpha^{\max} 
\eal\label{defdeltacF}\eeq
where $\alpha^{\max}$ is a parameter introduced to upper-bound $\alpha$.  Our goal in 
this subsection is to show that $\delta(\|\bfM_{\bfZ}\|,c_F,\alpha^{\max}) < 1$ when 
$c_F < 1$ or $\alpha^{\max} < 1$.  This is sufficient to show that 
$\delta(\|\bfM_{\bfZ},c_F,\alpha^{\max}) < 1$ since the feasible region in~\eqref{defdeltacF} 
is reduced when $c^F$ or $\alpha^{\max}$ is decreased.  

The supremum in~\eqref{defdeltacF} is attained since $\alpha,\zeta_u,\zeta_v$ all lie in a compact set.  Note that we have allowed for $\zeta_u$ to be in $[0,1]$ even though that might not 
necessarily happen based on the definition in~\eqref{defZDeltauv}.  The inequality in~\eqref{lbOnzetauzetav} is written as a squared inequality.  
Also, the constraint involving the ``min'' term in~\eqref{ubOnRspaceterm} is squared and replaced as two inequalities.  
The optimization problem in~\eqref{defdeltacF} is an instance of a quadratically constrained quadratic program (QCQP) and does not lend itself to easy analysis in the present form.  
To show that this is indeed a valid bound, we consider the semidefinite programming (SDP) relaxation of~\eqref{defdeltacF}.  Prior to presenting the SDP, we introduce the following 
matrix variables,
\beq
X = \begin{bmatrix} \zeta_u \\ \zeta_v \end{bmatrix} \begin{bmatrix} \zeta_u & \zeta_v \end{bmatrix}, 
Y = \begin{bmatrix} \sqrt{1-\zeta_u^2} \\ \sqrt{1-\zeta_v^2} \end{bmatrix} \begin{bmatrix} \sqrt{1-\zeta_u^2} & \sqrt{1-\zeta_v^2} \end{bmatrix} \label{defSDPmatrix}
\eeq
and data matrices
\[
C = \begin{bmatrix} \kappa \\ 1 \end{bmatrix}\begin{bmatrix} \kappa & 1 \end{bmatrix}, 
E = \begin{bmatrix} 1 \\ 1 \end{bmatrix} \begin{bmatrix} 1 & 1 \end{bmatrix}  \text{ with } 
\kappa = \|\bfM_{\bfZ}\|. 
\]
The SDP relaxation of~\eqref{defdeltacF} is,
\beq\bal
\delta_{SDP}(\|\bfM_{\bfZ}\|,c_F,\alpha^{\max})^2 = \sup\limits_{X,Y,\alpha,\gamma} &\; \frac{1}{4}\left(   C\bullet X   + \gamma^2 \right) \\
\mbox{s.t.} &\; E\bullet X  \geq 4(1 - c_F^2)\alpha^2  \\
&\; \gamma^2 \leq 4c_F^2\alpha^2  \\
&\; \gamma^2 \leq E\bullet Y \\
&\; X_{11} + Y_{11} = 1 , X_{22} + Y_{22} = 1 \\
&\; X, Y \succeq 0, 0 \leq \alpha  \leq \alpha^{\max} 
\eal\label{defdeltacFSDP}\eeq
where for $A,B \in \symmats^n$, $A\bullet B := \sum_{i=1}^n\sum_{j=1}^nA_{ij}B_{ij}$ represents the trace inner product between the matrices.  The SDP 
enforces additional constraints $X_{11}+Y_{11} = 1$, $X_{22}+Y_{22} = 1$ to enforce the relations $\zeta_u^2 + (1-\zeta_u^2) = 1$, $\zeta_v^2 + (1-\zeta_v^2) = 1$ 
respectively.  Since the SDP~\eqref{defdeltacFSDP} does not enforce the rank-1 requirement on matrices $X,Y$ this is a relaxation of~\eqref{defdeltacF}.  Hence, $\delta_{SDP}(\|\bfM_{\bfZ}\|,c_F,\alpha^{\max}) \geq \delta(\|\bfM_{\bfZ}\|,c_F,\alpha^{\max})$.  In the following we show that the objective values are in fact equal and hence, the convex SDP formulation can be used to obtain the bound in~\eqref{defdeltacF}.  
We use the proof technique of Kim and Kojima~\cite{KimKoj03} to show the result.

\begin{lemma}\label{lemm:defdeltacFSDP}
$\delta_{SDP}(\|\bfM_{\bfZ}\|,c_F,\alpha^{\max}) = \delta(\|\bfM_{\bfZ}\|,c_F,\alpha^{\max})$.
\end{lemma}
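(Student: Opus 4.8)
The plan is to prove the two inequalities $\delta_{SDP}\geq\delta$ and $\delta_{SDP}\leq\delta$ separately. The first direction is the routine relaxation bound, already noted in the text: every feasible point $(\zeta_u,\zeta_v,\alpha,\gamma)$ of~\eqref{defdeltacF} produces, via the rank-one matrices $X,Y$ of~\eqref{defSDPmatrix}, a feasible point of~\eqref{defdeltacFSDP} with the same objective, since $C\bullet X=(\|\bfM_{\bfZ}\|\zeta_u+\zeta_v)^2$, $E\bullet X=(\zeta_u+\zeta_v)^2$, $E\bullet Y=(\sqrt{1-\zeta_u^2}+\sqrt{1-\zeta_v^2})^2$, and the diagonal identities $X_{11}+Y_{11}=X_{22}+Y_{22}=1$ hold by construction. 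Thus the SDP optimal value dominates that of~\eqref{defdeltacF}.

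The substance is the reverse inequality, for which I would follow the Kim--Kojima idea of pushing the off-diagonal entries of an optimal SDP solution to the boundary of the positive semidefinite cone so as to exhibit a rank-one optimizer. Let $(X^*,Y^*,\alpha^*,\gamma^*)$ attain the supremum in~\eqref{defdeltacFSDP}. Because $X^*,Y^*\succeq 0$ and the diagonal constraints hold, $X^*_{11},X^*_{22}\in[0,1]$, so I would set $\zeta_u=\sqrt{X^*_{11}}$, $\zeta_v=\sqrt{X^*_{22}}$ and define $\tilde X,\tilde Y$ to be the rank-one matrices built from $(\zeta_u,\zeta_v)$ and $(\sqrt{1-\zeta_u^2},\sqrt{1-\zeta_v^2})$ exactly as in~\eqref{defSDPmatrix}. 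These keep the diagonals of $X^*,Y^*$ but replace the off-diagonals by their maximal admissible values $\tilde X_{12}=\zeta_u\zeta_v\geq X^*_{12}$ and $\tilde Y_{12}=\sqrt{(1-\zeta_u^2)(1-\zeta_v^2)}\geq Y^*_{12}$, where the inequalities are the positive semidefinite Cauchy--Schwarz bounds $|X^*_{12}|\leq\sqrt{X^*_{11}X^*_{22}}$ and $|Y^*_{12}|\leq\sqrt{Y^*_{11}Y^*_{22}}$.

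The crux is that this replacement neither breaks feasibility nor lowers the objective. The entry $X_{12}$ enters~\eqref{defdeltacFSDP} only through $E\bullet X=X_{11}+2X_{12}+X_{22}$ in a $\geq$ constraint and through $C\bullet X=\kappa^2X_{11}+2\kappa X_{12}+X_{22}$ in the objective, with $\kappa=\|\bfM_{\bfZ}\|\geq 0$ (in fact $0\le\kappa<1$ by Lemma~\ref{lemm:spectralMbnd}); hence increasing $X_{12}$ to $\zeta_u\zeta_v$ only relaxes the first constraint and weakly raises the objective. Likewise $Y_{12}$ appears solely inside $E\bullet Y$ on the relaxing side of $\gamma^2\leq E\bullet Y$, so enlarging it is harmless. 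Consequently $(\tilde X,\tilde Y,\alpha^*,\gamma^*)$ is SDP-feasible with objective at least $\delta_{SDP}^2$, so by optimality it equals $\delta_{SDP}^2$; but $(\tilde X,\tilde Y)$ is precisely the rank-one pair associated with $(\zeta_u,\zeta_v)$, whence $(\zeta_u,\zeta_v,\alpha^*,\gamma^*)$ is feasible for~\eqref{defdeltacF} and attains $\delta_{SDP}^2$. This gives $\delta\geq\delta_{SDP}$, and combined with the relaxation bound yields the claimed equality (the nonnegative square roots agree since the squared optimal values do).

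The one delicate point I anticipate is the sign bookkeeping: one must check that each off-diagonal term occurs only with a coefficient and on a side of its constraint for which enlargement is feasibility-preserving, and that it enters the objective only through $2\kappa X_{12}$ with $\kappa\geq 0$. Once that tabulation is done — together with the observation that the diagonal entries, $\alpha^*$, and $\gamma^*$ are left untouched — the rank-one reduction is immediate and the rest of the argument is purely the relaxation inequality.
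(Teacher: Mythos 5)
Your proposal is correct and follows essentially the same route as the paper's proof: both take an optimal SDP solution, replace $X^*,Y^*$ by the rank-one matrices built from the square roots of their diagonals, and use the positive-semidefinite bound $|X^*_{12}|\leq\sqrt{X^*_{11}X^*_{22}}$ to show the off-diagonals only increase, which preserves feasibility (the off-diagonals appear only on the relaxing sides of the constraints) and weakly increases the objective, forcing a rank-one optimizer. Your version is in fact slightly cleaner on two minor points the paper glosses over --- you correctly observe that only $\kappa\geq 0$ (not $\kappa>0$) is needed, and you dispense with the paper's separate treatment of $\alpha^{\max}=0$ and its unused appeal to strong duality.
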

\begin{proof}
The SDP in~\eqref{defdeltacFSDP} has a compact feasible set and hence, the supremum is always attained.  If $\alpha^{\max} = 0$ then the variables $\gamma, \alpha$ can be eliminated from the problem and we have that the maximum value for the SDP occurs at $X^* = E$ with 
an objective value of $(1+\kappa)^2/4$.  Since $X^*$ has rank-1 we have that the claim holds for $\alpha^{\max} = 0$.  
We assume without loss of generality that $\alpha^{\max} > 0$. 
Since the SDP is strictly feasible ($X = \bfI_2, Y = \bfI_2$ is always feasible), strong duality holds for the SDP.  
Suppose $(X^*,Y^*,\alpha^*,\gamma^*)$ solves the SDP problem~\eqref{defdeltacFSDP}.  Define $\hat{x}, \hat{y}$ as,
\[
\hat{x} = \begin{bmatrix} \sqrt{X^*_{11}} \\ \sqrt{X^*_{22}} \end{bmatrix}, \hat{y} = \begin{bmatrix} \sqrt{Y^*_{11}} \\ \sqrt{Y^*_{22}} \end{bmatrix}.
\]
We show in the following that $(\hat{x}\hat{x}^T,\hat{y}\hat{y}^T,\alpha^*,\gamma^*)$ is feasible for the SDP.  By definition of $\hat{x},\hat{y}$ it is easy to verify that equality 
constraints in~\eqref{defdeltacFSDP} hold.  Since $X^* \succeq 0$,
\beq\bal
(X^*_{12})^2 \leq X^*_{11}X^*_{22} \implies X^*_{12} \leq  \sqrt{X^*_{11}} \sqrt{X^*_{22}} \implies E \bullet X^* \leq E \bullet \hat{x}\hat{x}^T \\
\text{Since, } E\bullet X^* \geq 4(1-c_F^2)(\alpha^*)^2 \implies E\bullet \hat{x}\hat{x}^T \geq  4(1-c_F^2)(\alpha^*)^2. 
\eal\label{rank1IsFeas}\eeq
Using identical arguments it can be shown that $E\bullet \hat{y}\hat{y}^T \geq (\gamma^*)^2$.  This proves the feasibility of 
$(\hat{x}\hat{x}^T,\hat{y}\hat{y}^T,\alpha^*,\gamma^*)$ for the SDP.  Further, since $\kappa > 0$ the arguments in~\eqref{rank1IsFeas} can be repeated for the term in the objective to obtain that $C \bullet \hat{x}\hat{x}^T \geq C \bullet X^*$. Since, the SDP is convex the it must be true that  $C \bullet \hat{x}\hat{x}^T = C \bullet X^*$.  Thus, we have constructed a 
rank-1 solution to the SDP with optimal objective value.  
This proves the claim.
\end{proof}

Lemma~\ref{lemm:defdeltacFSDP} allows us to compute the worst-case contraction factor in~\eqref{defdeltacF} through the solution of a convex program in~\eqref{defdeltacFSDP} for which efficient solvers~\cite{Sedumi} exist.  Tables~\ref{tab:deltacF} and ~\ref{tab:deltacFalpha} 
list $\delta(\|\bfM_{\bfZ}\|,c_F^k,\alpha^{\max})$ obtained using the above procedure for 
worst-case scenario of $\alpha^{\max} = 1 > c_F$ and $\alpha^{\max} < 1 = c_F$ respectively.   
From the table, it is clear that 
$\delta(\|\bfM_{\bfZ}\|,c_F^k,\alpha^{\max}) < 1$ if $c_F < 1$ or  $\alpha^{\max} < 1$.

\begin{table}[ht]
\begin{center}
\begin{tabular}{|l|c|c|c|c|c|c|}
\hline
$c_F \downarrow$ & \multicolumn{6}{c|}{$ \leftarrow \|\bfM_{\bfZ}\| \rightarrow$} \\
\cline{2-7}
& 0.000& 0.200& 0.400& 0.600& 0.800& 0.999\\
\hline
0.000 & 0.500 & 0.600 & 0.700 & 0.800 & 0.900 & 0.9995\\
0.200 & 0.537 & 0.626 & 0.717 & 0.810 & 0.904 & 0.9995\\
0.400 & 0.627 & 0.692 & 0.763 & 0.838 & 0.917 & 0.9996\\
0.600 & 0.742 & 0.784 & 0.830 & 0.882 & 0.938 & 0.9997\\
0.800 & 0.868 & 0.888 & 0.911 & 0.937 & 0.966 & 0.9998\\
0.999 & 0.9993 & 0.9994 & 0.9995 & 0.9997 & 0.9998 & $\approx 1-10^{-6}$\\
\hline
\end{tabular}
\end{center}
\caption{Numerical estimates of $\delta(\|\bfM_{\bfZ}\|,c_F,1.0)$ for different values of $\|\bfM_{\bfZ}\|$ and $c_F$.}\label{tab:deltacF}
\end{table}

\begin{table}[ht]
\begin{center}
\begin{tabular}{|l|c|c|c|c|c|c|}
\hline
$\alpha^{\max} \downarrow$ & \multicolumn{6}{c|}{$ \leftarrow \|\bfM_{\bfZ}\| \rightarrow$} \\
\cline{2-7}
& 0.000& 0.200& 0.400& 0.600& 0.800& 0.999\\
\hline
0.000 & 0.500 & 0.600 & 0.700 & 0.800 & 0.900 & 0.9995\\
0.200 & 0.539 & 0.626 & 0.717 & 0.810 & 0.904 & 0.9995\\
0.400 & 0.640 & 0.697 & 0.764 & 0.838 & 0.917 & 0.9996\\
0.600 & 0.775 & 0.795 & 0.834 & 0.883 & 0.938 & 0.9997\\
0.800 & 0.894 & 0.900 & 0.915 & 0.938 & 0.966 & 0.9998\\
0.999 & 0.9995 & 0.9995 & 0.9996 & 0.9997 & 0.9998 & $\approx 1-10^{-6}$\\
\hline
\end{tabular}
\end{center}
\caption{Numerical estimates of $\delta(\|\bfM_{\bfZ}\|,1.0,\alpha^{\max})$ for different values of $\|\bfM_{\bfZ}\|$ and $\alpha^{\max}$.}\label{tab:deltacFalpha}
\end{table}

\section{Q-Linear Convergence}\label{sec:convergencefeasibleqp}

We use the analysis in \S\ref{sec:onestepconvergence} to establish that $\{\bfv^k\}$ converges at a Q-linear rate and 
$\{(\bfw^k,\bflambda^k)\}$ converges at a 2-step Q-linear rate.  
We assume through this section that Assumptions~\ref{ass:feas}-\ref{ass:redhesspd} hold.  
In addition, we also assume that the QP has an optimal solution and that the linear 
independence constraint qualification (LICQ)~\cite{BoydVandenbergheBook} holds at the 
solution.

\begin{assumption}\label{ass:feasqp}
The QP in~\eqref{qpform} has an optimal solution $\bfy^*$ with multipliers $\bflambda^*$ for the bound constraints.
\end{assumption}

\begin{assumption}\label{ass:licq} 
The linear independence constraint qualification (LICQ) holds at the solution, that is, the 
matrix $[\bfR \;\; \bfE^*]$ is full column rank.
\end{assumption}

A consequence of LICQ is that the largest singular value of $\bfR^T\bfE^*$ is $<1$,
\beq
\|\bfR^T\bfE^*\|  = c^*_{F} < 1 \label{angleBndR}
\eeq
the cosine of the Friedrich's angle~\cite[Definition~9.4]{Deu01} 
between the subspaces spanned by vectors in $\bfR$ and $\bfE^*$.  
The rest of the section is organized as follows. \S\ref{sec:localconvergence} 
and \S\ref{sec:globalconvergence} show the Q-linear convergence when 
$\activeset^k \subseteq \activeset^*$ and $\activeset^k \nsubseteq \activeset^*$ 
respectively.  \S\ref{sec:qlinearconvergence} proves the 
Q-linear convergence result for the full sequence and \S\ref{sec:optbeta} derives the 
optimal ADMM parameter $\beta^*$.  \S\ref{sec:relatedlit} compares our results with those 
in the literature.

\subsection{Convergence Rate for \texorpdfstring{$\activeset^k \subseteq \activeset^*$}{}}\label{sec:localconvergence}
From the definition of active-set $\activeset^k$ in~\eqref{defAk}, we have that,
\beq
\activeset^k \subseteq \activeset^* \iff \underline{\bfy}_i < \bfw^{k+1}_i < \overline{\bfy}_i 
\;\forall\; i \notin \activeset^* \label{optactiveset}
\eeq
since $\bflambda^*_i= 0$ for $i \notin \activeset^*$.   Hence, eventually the ADMM iterates enter 
a neighborhood of the solution where this holds.  Note that this is guaranteed regardless 
of the assumption on strict complementarity holding at the solution. 
For all such iterates we have from Assumption~\ref{ass:licq} 
that $c^k_F \leq c_F^*$ and this yields a contraction as shown below.

\begin{theorem}\label{thm:localconvergence}
Suppose Assumptions~\ref{ass:feas}-\ref{ass:licq} hold.  Then for all iterates $k$ such that $\activeset^k \subseteq \activeset^*$, 
$\|\bfv^{k+1}-\bfv^*\| \leq \delta(\|\bfM_{\bfZ}\|,c_F^*,\alpha^k)\|\bfv^k - \bfv^*\|$ with convergence  
rate $\delta(\|\bfM_{\bfZ}\|,c_F^*,\alpha^k) < 1$ where $\alpha^k \leq 1$.
\end{theorem}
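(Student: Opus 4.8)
The plan is to exhibit the actual iteration quantities $(\zeta_u^k,\zeta_v^k,\alpha^k,\gamma^k)$ as a feasible point of the worst-case program~\eqref{defdeltacF} with parameters $c_F=c_F^k$ and $\alpha^{\max}=\alpha^k$, so that the one-step ratio is controlled by that program's optimal value. Concretely, I would start from the last line of~\eqref{convrate0} and bound its range-space term by~\eqref{ubOnRspaceterm}, obtaining $\|\bfv^{k+1}-\bfv^*\|^2 \le \tfrac14\big((\|\bfM_{\bfZ}\|\zeta_u^k+\zeta_v^k)^2+(\gamma^k)^2\big)\|\bfv^k-\bfv^*\|^2$, which is exactly the objective of~\eqref{defdeltacF} evaluated at this point. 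Then I would verify each constraint in turn: the first constraint is precisely~\eqref{lbOnzetauzetav} squared; the two upper bounds on $(\gamma^k)^2$ are the two branches of the $\min$ in~\eqref{ubOnRspaceterm} squared; the bounds $0\le\zeta_u^k,\zeta_v^k\le1$ follow from $\|\bfZ^T\bfx\|\le\|\bfx\|$ together with the non-expansiveness $\|\Delta\bfu^k\|\le\|\Delta\bfv^k\|$ used in~\eqref{bndRDeltauByDeltav}; and $0\le\alpha^k\le\alpha^{\max}=\alpha^k$ is trivial. Feasibility and the definition of the supremum then give $\|\bfv^{k+1}-\bfv^*\|\le \delta(\|\bfM_{\bfZ}\|,c_F^k,\alpha^k)\,\|\bfv^k-\bfv^*\|$.

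The second step replaces $c_F^k$ by $c_F^*$. Because $\activeset^k\subseteq\activeset^*$, the columns of $\bfE^k$ in~\eqref{defEk} form a subset of those of $\bfE^*$ in~\eqref{defEstar}; hence for any unit vector $z$ we can write $\bfE^k z=\bfE^*\tilde z$ with $\|\tilde z\|=\|z\|$, and taking the supremum over $z$ yields $c_F^k=\|\bfR^T\bfE^k\|\le\|\bfR^T\bfE^*\|=c_F^*$. It then suffices to note that $\delta(\|\bfM_{\bfZ}\|,\cdot,\alpha^k)$ is non-decreasing in $c_F$: raising $c_F$ relaxes the first constraint of~\eqref{defdeltacF} (its right-hand side $4(1-c_F^2)\alpha^2$ decreases) and relaxes the second ($4c_F^2\alpha^2$ increases), so the feasible set enlarges and the supremum cannot decrease. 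Combining, $\delta(\|\bfM_{\bfZ}\|,c_F^k,\alpha^k)\le\delta(\|\bfM_{\bfZ}\|,c_F^*,\alpha^k)$, which is the stated rate.

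Finally I would establish strictness. Assumption~\ref{ass:licq} gives $c_F^*<1$ through~\eqref{angleBndR}, so it suffices to invoke the worst-case analysis of Section~\ref{sec:onestepconvergence}: Lemma~\ref{lemm:defdeltacFSDP} identifies the value of~\eqref{defdeltacF} with that of the convex relaxation~\eqref{defdeltacFSDP}, and that analysis yields $\delta(\|\bfM_{\bfZ}\|,c_F,\alpha^{\max})<1$ whenever $c_F<1$. I expect this strict bound to be the crux. The obstruction is the coupling of the three nonconvex constraints: driving the first objective term toward its ceiling $(\|\bfM_{\bfZ}\|+1)^2/4$, which is $<1$ since $\|\bfM_{\bfZ}\|<1$ by Lemma~\ref{lemm:spectralMbnd}, requires $\zeta_u,\zeta_v\to1$, and the third constraint then forces $\gamma\to0$; keeping $\gamma$ large instead needs $c_F$ near $1$ via the second constraint, while the first constraint simultaneously demands $\zeta_u+\zeta_v\ge2\sqrt{1-c_F^2}\,\alpha$. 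A contradiction argument shows the supremum can equal $1$ only if $c_F=1$, so $c_F^*<1$ forces $\delta(\|\bfM_{\bfZ}\|,c_F^*,\alpha^k)<1$; it is precisely the SDP reformulation of Section~\ref{sec:onestepconvergence} that turns this reasoning into a tractable convex computation.
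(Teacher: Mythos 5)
Your proposal is correct and follows essentially the same route as the paper: the inclusion $\activeset^k\subseteq\activeset^*$ gives $c_F^k\le c_F^*<1$ via the column-subset argument and LICQ, and the one-step bound then follows from the worst-case program of \S\ref{sec:onestepconvergence}, whose value is certified to be below $1$ through the exact SDP relaxation of Lemma~\ref{lemm:defdeltacFSDP}. The only difference is that you unpack the "analysis in \S\ref{sec:onestepconvergence} applies" step by explicitly verifying feasibility of $(\zeta_u^k,\zeta_v^k,\alpha^k,\gamma^k)$ in~\eqref{defdeltacF} and the monotonicity of $\delta$ in $c_F$, which the paper leaves implicit.
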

\begin{proof}
If $\activeset^k \subseteq \activeset^*$, then the columns of $\bfE^k$ are a subset of the 
columns  of $\bfE^*$.  Hence, $c^k_F \leq c_F^* < 1$ by Assumption~\ref{ass:licq}.  
The analysis in \S\ref{sec:onestepconvergence} applies to yield that 
$\delta(\|\bfM_{\bfZ}\|,c_F^*,\alpha^k) \leq \delta(\|\bfM_{\bfZ}\|,c_F^*,1.0) < 1$ 
and the claim follows.
\end{proof}

\subsection{Convergence Rate for \texorpdfstring{$\activeset^k \nsubseteq \activeset^*$}{}}
\label{sec:globalconvergence}
 
We begin by deriving a worst-case upper bound on $\alpha^k$ by varying over $\bfv$ 
such that $\|\Delta \bfv\| = \|\Delta\bfv^k\| = \Delta^k$.   
From the definition of active-set in~\eqref{defAk},
\beq\bal
\activeset^k \nsubseteq \activeset^*	\implies &\; \exists\; i \in \activeset^k \setminus \activeset^*\text{ such that }  \bflambda^{k+1}_i \neq 0 \\
\implies&\;  \exists\; i \in \activeset ^k\setminus \activeset^*\text{ such that }  \bfw^{k+1}_i  = 
\underline{\bfy}_i \text{ or }\overline{\bfy}_i.
\eal\label{noptactiveset}\eeq
We seek to obtain the supremum of the following program,
\beq\bal
\sup\limits_{\bfv} 	&\; \alpha \\
\text{s.t.}						&\; \alpha = \frac{\|\bflambda^{+}-\bflambda^*\|}{\|\Delta\bfv\|} \\
							&\; \|\Delta\bfv\| = \Delta^k,\; \activeset \nsubseteq \activeset^*
\eal\label{maxalphak}\eeq
where $\bflambda^+$ denotes the multiplier resulting from the ADMM iteration 
in~\eqref{admmiter}.  Since, the above is the supremum over all possible $\bfv$ 
satisfying $\|\Delta\bfv\| = \Delta^k$ it follows that this is an upper bound for $\alpha^k$.  
Note that $\bfv$ lies in a compact set and consequently, the supremum 
in~\eqref{maxalphak} is attained.  
We will assume without loss of generality that 
\beq
\underline{\bfy}_i \text{ or } \overline{\bfy}_i  \text{ is finite for at least one } i \notin \activeset^*.  \label{finiteBndsForiGtna}
\eeq
If this is not the case,~\eqref{noptactiveset} shows that 
$\activeset^k \nsubseteq \activeset^*$ never occurs and hence, the analysis in \S\ref{sec:localconvergence} applies.  Define, 
\beq
\Delta\bfy^*_i = \min\left( \bfy^*_i-\underline{\bfy}_i,\overline{\bfy}_i-\bfy^*_i \right) \text{ and }
i^{\min} = \arg\min_{i \notin \activeset^*}  \Delta\bfy^*_i. \label{defDeltaymin}
\eeq
The quantity $\Delta\bfy^*_{i^{\min}}$ measures the smallest distance from the bounds for the indices that are inactive at the solution.  This will play a critical role in deriving the upper bound. Further, $\Delta\bfy^*_i > 0$ exists by LICQ (Assumption~\ref{ass:licq}) and~\eqref{finiteBndsForiGtna}. 
The following lemma upper bounds $\alpha^k$.

\begin{lemma}\label{lemm:maxalphak}
Suppose Assumptions~\ref{ass:feas}-\ref{ass:licq} hold, 
$\activeset^k \nsubseteq \activeset^*$ and $\Delta^k := \|\Delta\bfv^k\|$.  Then, 
\beq
\alpha^k \leq \alpha^{\max}(\Delta^k)  \text{ where, }
\alpha^{\max}(\Delta^k) = \sqrt{1 - \left(\frac{\Delta\bfy^*_{i^{\min}}}{\Delta^k}\right)^2}. \label{defmaxalphak}
\eeq
\end{lemma}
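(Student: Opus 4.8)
The plan is to recast the desired bound as a single squared inequality and then obtain it from the firm non-expansiveness of the projection together with a coordinate-wise distance estimate forced by the hypothesis $\activeset^k \nsubseteq \activeset^*$. Squaring the definition of $\alpha^k$ in~\eqref{defckalphak}, the claim~\eqref{defmaxalphak} is equivalent to
$\|\bflambda^{k+1}-\bflambda^*\|^2 + (\Delta\bfy^*_{i^{\min}})^2 \leq (\Delta^k)^2 = \|\bfv^k-\bfv^*\|^2$, so it suffices to establish this one inequality for the given iterate; since the argument will use only $\|\Delta\bfv^k\| = \Delta^k$ and $\activeset^k \nsubseteq \activeset^*$, it simultaneously bounds the supremum defining~\eqref{maxalphak}.

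First I would apply the firm non-expansiveness property~\eqref{lemm:projnonexpansive} to the pair $\bfv^k$ and the fixed point $\bfv^*$. From~\eqref{admmiter} we have $\proj_{\bfYset}(\bfv^k) = \bfw^{k+1}$ and $(\bfI_n - \proj_{\bfYset})(\bfv^k) = \bfv^k - \bfw^{k+1} = -\bflambda^{k+1}$, while Lemma~\ref{thm:fixpointsminima} together with~\eqref{defvstar} gives $\proj_{\bfYset}(\bfv^*) = \bfy^*$ and $(\bfI_n - \proj_{\bfYset})(\bfv^*) = \bfv^* - \bfy^* = -\bflambda^*$. Substituting these into~\eqref{lemm:projnonexpansive} and squaring produces the Pythagorean-type decomposition $\|\bfw^{k+1}-\bfy^*\|^2 + \|\bflambda^{k+1}-\bflambda^*\|^2 \leq \|\bfv^k-\bfv^*\|^2$, which rearranges to $\|\bflambda^{k+1}-\bflambda^*\|^2 \leq (\Delta^k)^2 - \|\bfw^{k+1}-\bfy^*\|^2$.

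Next I would lower bound $\|\bfw^{k+1}-\bfy^*\|^2$ using the hypothesis. Since $\activeset^k \nsubseteq \activeset^*$, the implication~\eqref{noptactiveset} furnishes an index $i \in \activeset^k \setminus \activeset^*$ with $\bfw^{k+1}_i = \underline{\bfy}_i$ or $\bfw^{k+1}_i = \overline{\bfy}_i$. Because $i \notin \activeset^*$ we have $\underline{\bfy}_i < \bfy^*_i < \overline{\bfy}_i$, so $|\bfw^{k+1}_i - \bfy^*_i|$ equals $\bfy^*_i - \underline{\bfy}_i$ or $\overline{\bfy}_i - \bfy^*_i$, and in either case $|\bfw^{k+1}_i - \bfy^*_i| \geq \Delta\bfy^*_i \geq \Delta\bfy^*_{i^{\min}}$ by the definitions in~\eqref{defDeltaymin}. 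Retaining only this single coordinate gives $\|\bfw^{k+1}-\bfy^*\|^2 \geq (\bfw^{k+1}_i - \bfy^*_i)^2 \geq (\Delta\bfy^*_{i^{\min}})^2$.

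Combining the two displays yields $\|\bflambda^{k+1}-\bflambda^*\|^2 \leq (\Delta^k)^2 - (\Delta\bfy^*_{i^{\min}})^2$; dividing through by $(\Delta^k)^2$ and taking square roots delivers~\eqref{defmaxalphak}. I expect the only delicate point to be the correct identification of the projection and complement-projection values at $\bfv^*$ (so that~\eqref{lemm:projnonexpansive} produces exactly the $\bfw$/$\bflambda$ split rather than a coarser bound); the remaining coordinate-wise estimate is routine. I would also record that $\Delta\bfy^*_{i^{\min}} > 0$, guaranteed by Assumption~\ref{ass:licq} and~\eqref{finiteBndsForiGtna}, since this strict positivity is precisely what makes $\alpha^{\max}(\Delta^k) < 1$ for finite $\Delta^k$ and feeds the contraction argument of \S\ref{sec:globalconvergence}.
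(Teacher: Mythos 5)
Your proof is correct and follows essentially the same route as the paper's: both reduce the claim to $\|\bflambda^{k+1}-\bflambda^*\|^2 \leq (\Delta^k)^2 - \|\bfw^{k+1}-\bfy^*\|^2$ via firm non-expansiveness of $\proj_{\bfYset}$ applied to the pair $(\bfv^k,\bfv^*)$, and then lower-bound $\|\bfw^{k+1}-\bfy^*\|$ by $\Delta\bfy^*_{i^{\min}}$ using the single coordinate forced to a bound by $\activeset^k \nsubseteq \activeset^*$. The only difference is presentational: you invoke the stacked inequality~\eqref{lemm:projnonexpansive} directly, whereas the paper expands $\|\Delta\bfv^k\|^2$ and discards the nonnegative cross term via~\eqref{lemm:projineq} inside a supremum framing --- the same estimate, organized as a direct chain in your version.
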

\begin{proof}
From the definition of $\alpha^k$ in~\eqref{defckalphak} and $\bfv^k$ in~\eqref{relateadmmdr} 
we have that,
\beq\bal
(\alpha^k)^2 
	 	=&\; \frac{\|\bflambda^{k+1}-\bflambda^*\|^2}{\|\Delta\bfv^k\|^2} 
		= \frac{\|\bflambda^{k+1}-\bflambda^*\|^2}{\|\bfw^{k+1}-\bfw^*-\bflambda^{k+1}+\bflambda^*\|^2} \\
=&\; 1- \frac{2(\bfw^{k+1}-\bfw^*)^T(-\bflambda^{k+1}+\bflambda^*) + \|\bfw^{k+1}-\bfw^*\|^2}{(\Delta^k)^2} 
\eal\label{sqralphak}\eeq 
where the last expression is obtained by simplification and assumption that $\|\Delta\bfv^k\|$ is specified.  From the firm non-expansiveness property~\eqref{lemm:projineq} 
and~\eqref{relateadmmdr}, we have that 
\beq\bal
		&\; (\bfw^{k+1}-\bfw^*)^T(-\bflambda^{k+1}+\bflambda^*) \\
		=&\; \left(\proj_{\bfYset}(\bfv^k) - \proj_{\bfYset}(\bfv^*)\right)^T \left((\bfI_n-\proj_{\bfYset})(\bfv^k) - 
(\bfI_n-\proj_{\bfYset})(\bfv^*)\right) \geq 0. 
\eal\label{bndOnDeltalam}\eeq
Since $\activeset^k \nsubseteq \activeset^*$, 
there exists at least one $i' \in \activeset^k \setminus \activeset^*$ satisfying~\eqref{noptactiveset}. 
Obviously, the maximum in~\eqref{sqralphak} occurs for $\bfw^{k+1},\bflambda^{k+1}$ 
such that numerator is as small as possible.  In view of~\eqref{bndOnDeltalam} 
and~\eqref{defDeltaymin} this occurs for
\[
(\bfw^{k+1}-\bfw^*)^T(-\bflambda^{k+1}+\bflambda^*) = 0 
\text{ and } \|\bfw^{k+1}-\bfw^*\|^2 =  (\Delta\bfy^*_{i^{\min}})^2.
\]
 The existence of 
 $\Delta\bfy^*_{i^{\min}} > 0$ is guaranteed by Assumption~\ref{ass:licq} 
 and~\eqref{finiteBndsForiGtna}.  Hence, $(\alpha^k)^2 \leq {1 - \left(\frac{\Delta\bfy^*_{i^{\min}}}
 {\Delta^k}\right)^2}$ completing the proof.
\end{proof}

We can now state the result on the convergence rate for $\activeset^k \nsubseteq \activeset^*$.
\begin{theorem}\label{thm:globalconvergence}
Suppose Assumptions~\ref{ass:feas}-\ref{ass:licq} hold. Then for all iterates $k$ such that 
$\activeset^k \nsubseteq \activeset^*$, $\|\bfv^{k+1}-\bfv^*\| \leq 
\delta(\|\bfM_{\bfZ}\|,c_F^k,\alpha^{\max}(\Delta^k))\|\bfv^k - \bfv^*\|$, with convergence rate 
$\delta(\|\bfM_{\bfZ}\|,c_F^k,\alpha^{\max}(\Delta^k)) < 1$ where $\Delta^k = \|\Delta\bfv^k\|$, $c_F^k \leq 1$. 
\end{theorem}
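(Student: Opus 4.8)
The plan is to mirror the structure of the proof of Theorem~\ref{thm:localconvergence}, but to source the contraction from the step-size ratio $\alpha^k$ rather than from the Friedrichs angle, since in this regime $c_F^k$ may equal $1$. The one-step analysis of \S\ref{sec:onestepconvergence} already delivers the bound \eqref{convrate1}, in which the iterate-dependent quantities $\zeta_u^k,\zeta_v^k$ (defined in \eqref{defZDeltauv}), $\alpha^k$ and $\gamma^k$ (the ``min'' in \eqref{ubOnRspaceterm}) appear. The whole task then reduces to showing that this quadruple is a \emph{feasible point} of the program \eqref{defdeltacF} with parameters $c_F = c_F^k$ and $\alpha^{\max} = \alpha^{\max}(\Delta^k)$, and then invoking the fact proved in \S\ref{sec:worstCaseBnd} that the optimal value $\delta$ is strictly below $1$ whenever $\alpha^{\max} < 1$.

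First I would dispose of the trivial case $\Delta^k = \|\Delta\bfv^k\| = 0$, where $\bfv^k = \bfv^*$ and the claimed inequality holds vacuously. Assuming $\Delta^k > 0$, I would verify feasibility term by term: $\zeta_u^k,\zeta_v^k \in [0,1]$ follows from the non-expansiveness \eqref{lemm:reflineq} together with \eqref{defZDeltauv}; the lower bound $(\zeta_u^k+\zeta_v^k)^2 \geq 4(1-(c_F^k)^2)(\alpha^k)^2$ is exactly the squared form of \eqref{lbOnzetauzetav}; the two upper bounds on $(\gamma^k)^2$ are precisely the two branches of the ``min'' defining $\gamma^k$ in \eqref{ubOnRspaceterm}; and $0 \leq \alpha^k \leq \alpha^{\max}(\Delta^k)$ is supplied by Lemma~\ref{lemm:maxalphak}, which applies because $\activeset^k \nsubseteq \activeset^*$. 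Feasibility then gives $\frac14\big((\|\bfM_{\bfZ}\|\zeta_u^k+\zeta_v^k)^2 + (\gamma^k)^2\big) \leq \delta(\|\bfM_{\bfZ}\|,c_F^k,\alpha^{\max}(\Delta^k))^2$, and combining with \eqref{convrate1} yields $\|\bfv^{k+1}-\bfv^*\| \leq \delta(\|\bfM_{\bfZ}\|,c_F^k,\alpha^{\max}(\Delta^k))\,\|\bfv^k-\bfv^*\|$.

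It then remains to show the rate is strictly below $1$. Since $\activeset^k \nsubseteq \activeset^*$, Assumption~\ref{ass:licq} together with the standing hypothesis \eqref{finiteBndsForiGtna} guarantees $\Delta\bfy^*_{i^{\min}} > 0$ (see \eqref{defDeltaymin}); as $\Delta^k$ is finite, $\alpha^{\max}(\Delta^k) = \sqrt{1 - (\Delta\bfy^*_{i^{\min}}/\Delta^k)^2} < 1$ strictly. By the monotonicity noted in \S\ref{sec:worstCaseBnd}---shrinking $\alpha^{\max}$ or $c_F$ only contracts the feasible region of \eqref{defdeltacF}---together with the values in Table~\ref{tab:deltacFalpha}, we obtain $\delta(\|\bfM_{\bfZ}\|,c_F^k,\alpha^{\max}(\Delta^k)) \leq \delta(\|\bfM_{\bfZ}\|,1,\alpha^{\max}(\Delta^k)) < 1$, completing the argument.

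I expect the only delicate point to be the strictness $\alpha^{\max}(\Delta^k) < 1$: the contraction cannot be drawn from $c_F^k$ here, so it rests entirely on the positivity of $\Delta\bfy^*_{i^{\min}}$ furnished by LICQ and \eqref{finiteBndsForiGtna}, and on the finiteness of $\Delta^k$. A conceptual rather than technical subtlety worth flagging is that the resulting factor depends on $\Delta^k$ and degrades as $\Delta^k$ grows, so, unlike Theorem~\ref{thm:localconvergence}, this step does \emph{not} by itself furnish a uniform per-iteration rate; that non-uniformity is precisely what motivates the separate Q-linear argument of \S\ref{sec:qlinearconvergence} and the two-scale convergence behavior described there.
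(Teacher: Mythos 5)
Your proposal is correct and follows essentially the same route as the paper's proof: invoke Lemma~\ref{lemm:maxalphak} to get $\alpha^{\max}(\Delta^k)<1$, apply the one-step analysis of \S\ref{sec:onestepconvergence} (i.e., feasibility of the iterate quantities for the program~\eqref{defdeltacF}), and use monotonicity of $\delta$ in $c_F$ to conclude $\delta(\|\bfM_{\bfZ}\|,c_F^k,\alpha^{\max}(\Delta^k)) \leq \delta(\|\bfM_{\bfZ}\|,1.0,\alpha^{\max}(\Delta^k)) < 1$. The paper's proof is simply a terser version of your argument, leaving the feasibility verification implicit; your closing remark about the $\Delta^k$-dependence and the need for the separate uniformization in \S\ref{sec:qlinearconvergence} is also consistent with the paper.
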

\begin{proof}
Since $\activeset^k \nsubseteq \activeset^*$, Lemma~\ref{lemm:maxalphak} guarantees 
the existence of $\alpha^{\max}(\Delta\bfv^k) < 1$.  The analysis in 
\S\ref{sec:onestepconvergence} applies to yield  that 
$\delta(\|\bfM_{\bfZ}\|,1.0,\alpha^{\max}(\Delta^k)) < 1$.  Since 
$\delta(\|\bfM_{\bfZ}\|,c_F^k,\alpha^{\max}(\Delta^k)) \leq \delta(\|\bfM_{\bfZ}\|,1.0,\alpha^{\max}(\Delta^k))$  we have the said result.
\end{proof}

\subsection{Q-Linear Convergence}\label{sec:qlinearconvergence}
We prove the Q-linear convergence result below.
\begin{theorem}\label{thm:qlinearconvergence}
Suppose Assumptions~\ref{ass:feas}-\ref{ass:licq} hold. 
Let $\bfw^0,\bflambda^0$ be the initial iterates for the ADMM iteration in~\eqref{admmiter_ext}.  Then,
\beq
			\|\Delta\bfv^{k+1}\| \leq \delta^{G}\|\Delta\bfv^k\| 
\label{convrateofvk}\eeq
\beq
\|\stacktwo{\bfw^{k+2}}{\bflambda^{k+2}} - \stacktwo{\bfy^*}{\bflambda^*/\beta}\| \leq \delta^G
\|\stacktwo{\bfw^{k}}{\bflambda^{k}} - \stacktwo{\bfy^*}{\bflambda^*/\beta}\| \label{convrateofwklamk}
\eeq
where, 
\beq\bal
	&\; \delta^{G} = \max\left( \delta\left(\|\bfM_{\bfZ}\|,c_F^*,1.0\right), \delta\left(\|\bfM_{\bfZ}\|,1.0,\alpha^{\max}(\Delta^0)\right) \right)  \\
	&\; \Delta^0 = \|\bfv^0 - \bfv^*\|.
\eal\label{defDeltaG}\eeq
\end{theorem}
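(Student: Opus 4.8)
The plan is to combine the two one-step convergence results, Theorem~\ref{thm:localconvergence} and Theorem~\ref{thm:globalconvergence}, into a single uniform contraction factor $\delta^G < 1$ valid at every iteration, and then to translate the resulting Q-linear convergence of $\{\bfv^k\}$ into the stated two-step Q-linear convergence of $\{(\bfw^k,\bflambda^k)\}$. The central observation is that at every iterate $k$ exactly one of two mutually exclusive cases holds: either $\activeset^k \subseteq \activeset^*$ or $\activeset^k \nsubseteq \activeset^*$. In the first case Theorem~\ref{thm:localconvergence} gives $\|\Delta\bfv^{k+1}\| \leq \delta(\|\bfM_{\bfZ}\|,c_F^*,\alpha^k)\|\Delta\bfv^k\|$, and since $\alpha^k \leq 1$ and $\delta$ is monotone in its last argument, this is at most $\delta(\|\bfM_{\bfZ}\|,c_F^*,1.0)\|\Delta\bfv^k\|$. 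In the second case Theorem~\ref{thm:globalconvergence} gives $\|\Delta\bfv^{k+1}\| \leq \delta(\|\bfM_{\bfZ}\|,c_F^k,\alpha^{\max}(\Delta^k))\|\Delta\bfv^k\|$, which by monotonicity in the Friedrichs-angle argument ($c_F^k \leq 1$) is at most $\delta(\|\bfM_{\bfZ}\|,1.0,\alpha^{\max}(\Delta^k))\|\Delta\bfv^k\|$. Taking the maximum over the two bounds yields~\eqref{convrateofvk} with $\delta^G$ as in~\eqref{defDeltaG}, provided I can replace $\alpha^{\max}(\Delta^k)$ by $\alpha^{\max}(\Delta^0)$.

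\emph{The main obstacle} is precisely justifying this replacement of $\Delta^k$ by $\Delta^0$ in the second case. The quantity $\alpha^{\max}(\Delta) = \sqrt{1-(\Delta\bfy^*_{i^{\min}}/\Delta)^2}$ from~\eqref{defmaxalphak} is \emph{increasing} in $\Delta$, so a uniform bound requires an upper bound on $\Delta^k$ that is independent of $k$. The key input here is the monotonicity established in Lemma~\ref{lemm:iternonexpansive}\eqref{eqmvk}, namely $\|\bfv^{k+1}-\bfv^*\| \leq \|\bfv^k-\bfv^*\|$ (applied with the fixed point $\bfv^*$ in place of $\bfv^j$, using Lemma~\ref{thm:fixpointsminima}). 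This gives $\Delta^k \leq \Delta^0$ for all $k$, and since $\delta$ is monotone increasing in $\alpha^{\max}$ while $\alpha^{\max}$ is increasing in its argument, we obtain $\delta(\|\bfM_{\bfZ}\|,1.0,\alpha^{\max}(\Delta^k)) \leq \delta(\|\bfM_{\bfZ}\|,1.0,\alpha^{\max}(\Delta^0)) < 1$. The strict inequality $\alpha^{\max}(\Delta^0) < 1$ is guaranteed by $\Delta\bfy^*_{i^{\min}} > 0$, which exists by Assumption~\ref{ass:licq} and~\eqref{finiteBndsForiGtna}. Combining both cases establishes~\eqref{convrateofvk} with $\delta^G < 1$.

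For the second statement~\eqref{convrateofwklamk}, I would chain the non-expansiveness relations between the DR iterate $\bfv$ and the ADMM pair $(\bfw,\bflambda)$. Specifically, Lemma~\ref{lemm:iternonexpansive}\eqref{bndv} bounds $\|\Delta\bfv^{k+1}\|$ by $\|(\bfw^{k+1},\bflambda^{k+1}) - (\bfw^*,\bflambda^*)\|$, and part~\eqref{bnditersbyv} bounds $\|(\bfw^{k+1},\bflambda^{k+1})-(\bfw^*,\bflambda^*)\|$ by $\|\Delta\bfv^k\|$, where $(\bfw^*,\bflambda^*) = (\bfy^*,\bflambda^*/\beta)$ is the fixed point from Lemma~\ref{thm:fixpointsminima}. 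Composing these with~\eqref{convrateofvk} over two iterations produces the factor $\delta^G$ between the pairs at iteration $k$ and $k+2$: one applies part~\eqref{bnditersbyv} to pass from $(\bfw^k,\bflambda^k)$ to $\Delta\bfv^k$, then~\eqref{convrateofvk} to get a factor $\delta^G$ onto $\Delta\bfv^{k+1}$, and finally part~\eqref{bndv} to pass back up to $(\bfw^{k+2},\bflambda^{k+2})$. This two-step structure is exactly why the conclusion is stated between indices $k$ and $k+2$ rather than $k$ and $k+1$, and the argument is routine once~\eqref{convrateofvk} is in hand.
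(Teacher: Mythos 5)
Your proposal is correct and follows essentially the same route as the paper: the same case split on $\activeset^k \subseteq \activeset^*$ versus $\activeset^k \nsubseteq \activeset^*$, the same use of the monotonicity of $\{\|\Delta\bfv^k\|\}$ from Lemma~\ref{lemm:iternonexpansive}\eqref{eqmvk} to replace $\alpha^{\max}(\Delta^k)$ by $\alpha^{\max}(\Delta^0)$, and the same chaining of Lemma~\ref{lemm:iternonexpansive}\eqref{bndv} and \eqref{bnditersbyv} with the fixed point to obtain the two-step bound. One small slip: in your final sentence the roles of parts \eqref{bndv} and \eqref{bnditersbyv} are swapped (it is \eqref{bndv} that passes from $(\bfw^k,\bflambda^k)$ down to $\Delta\bfv^k$ and \eqref{bnditersbyv} that passes from $\Delta\bfv^{k+1}$ up to $(\bfw^{k+2},\bflambda^{k+2})$), though you state both inequalities correctly earlier in the same paragraph.
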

\begin{proof}
At any iterate $k$ of the algorithm one of the following holds:
\begin{enumerate}[(a)]
\item $\activeset^k \subseteq \activeset^*$. \label{Ak:casea}  
Theorem~\ref{thm:localconvergence} yields a worst-case contraction factor by assuming 
$\alpha^k = 1$ as $\delta(\|\bfM_{\bfZ}\|,c_F^*,1.0) < 1$.
\item $\activeset^k \nsubseteq \activeset^*$. \label{Ak:caseb}  Note that 
$\{\|\Delta\bfv^k\|\}$ is non-increasing (refer Lemma~\ref{lemm:iternonexpansive}
\eqref{eqmvk}).  Consequently, $\Delta^k \leq \Delta^0 \;\forall\; k$ which implies that $
\alpha^{\max}(\Delta^k) \leq \alpha^{\max}(\Delta^0) \;\forall\; k$.  Thus, we can provide a uniform 
upper bound on $\alpha^k$ as $\alpha^k \leq \alpha^{\max}(\Delta^0)$.  
Combining this with Theorem~\ref{thm:globalconvergence} we obtain 
$\delta(\|\bfM_{\bfZ}\|,c_F^k,\alpha^{\max}(\Delta^k)) \leq \delta(\|\bfM_{\bfZ}\|,1.0,\alpha^{\max}(\Delta^0)) < 1$.  
\end{enumerate}
Combining the observation in the above cases and noting that $\alpha^{\max}(\|\Delta\bfv^0\|) < 1$ we have that $\delta^G$~\eqref{defDeltaG} upper bounds the contraction 
factors in all of the above cases.  Thus, the inequality in~\eqref{convrateofvk} holds.  From Lemma~\ref{lemm:iternonexpansive}\eqref{bnditersbyv}, we have that
\[
\|\stacktwo{\bfw^{k+2}}{\bflambda^{k+2}} - \stacktwo{\bfy^*}{\bflambda^*/\beta}\| \leq \|\Delta\bfv^{k+1}\|
\]
and by Lemma~\ref{lemm:iternonexpansive}\eqref{bndv} we have that,
\[
\|\Delta\bfv^k\| \leq \|\stacktwo{\bfw^{k}}{\bflambda^{k}} - \stacktwo{\bfy^*}{\bflambda^*/\beta}\|.
\]
Combining the two inequalities with~\eqref{convrateofvk} yields~\eqref{convrateofwklamk}. 
\end{proof}

The analysis shows that we expect the iterates $\{\bfv^k\}$ to have different convergence rates 
depending on whether the active set has been correctly identified or not.   
Also, note that we do not assume that $\activeset^k = \activeset^*$ holds for all $k$ 
sufficiently large.  We will explore this further in the section on numerical experiments.  

\subsection{Optimal Choice of $\beta$}\label{sec:optbeta}
Observe that $\beta$ affects the convergence rate in Theoerem~\ref{thm:qlinearconvergence} 
through $\|\bfM_{\bfZ}\|$.  Thus, we can postulate an optimum $\beta^*$ so as to minimize 
$\|{\bfM}_{\bfZ}\|$. The eigenvalues of $\bfZ^T\bfM\bfZ$ satisfy $\lambda(\bfZ^T\bfM\bfZ) = 
\lambda((\bfZ^T(\bfQ/\beta+\bfI_n)\bfZ)^{-1}) = \beta/(\beta+\lambda({\bfZ^T\bfQ\bfZ}))$. 
Thus, the optimal choice for $\beta$ is given by,
\[
\beta^{*} = \text{arg}\min\limits_{\beta > 0}\max\limits_{i}\left| \frac{\beta }{\beta + 
\lambda_i(\bfZ^T\bfQ\bfZ)} -\half\right|
\]
where we have divided $\|\bfM_{\bfZ}\|$ by $2$. 
We can rearrange the right hand side to obtain,
\beq
\beta^{*} = \text{arg}\min\limits_{\beta > 0}\max\limits_{i}\left| 
\frac{\beta/\lambda_i({\bfZ^T\bfQ\bfZ})}{\beta/\lambda_i({\bfZ^T\bfQ\bfZ}) + 1} -\half\right|. \label{optbetaform}
\eeq
Equation~\eqref{optbetaform} is identical in form to Equation~(36) of \cite{GhaTeiSha13} and 
the analysis proposed in~\cite{GhaTeiSha13} to obtain the optimal parameter can be utilized. 

 \begin{theorem}\label{thm:optstepsize} Suppose Assumptions 
\ref{ass:feas}-\ref{ass:licq} hold. Then, the optimal step-size is 
\beq
\beta^{*} = 
\sqrt{\lambda_{\text{min}}({\bfZ^T\bfQ\bfZ})\lambda_{\text{max}}({\bfZ^T\bfQ\bfZ})}.\label{optbeta}
\eeq
\end{theorem}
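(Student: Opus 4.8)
The plan is to reduce \eqref{optbetaform} to a one-dimensional min-max in which only the extreme eigenvalues of $\bfZ^T\bfQ\bfZ$ enter, and then to locate the minimizer by balancing the two active terms. Writing $\lambda_i := \lambda_i(\bfZ^T\bfQ\bfZ)$ and substituting $t_i = \beta/\lambda_i > 0$, the objective in \eqref{optbetaform} becomes $\half\max_i |\phi(t_i)|$ with $\phi(t) := (t-1)/(t+1)$. First I would record that $\phi(t) = 1 - 2/(t+1)$ is strictly increasing on $(0,\infty)$, maps onto $(-1,1)$, and vanishes at $t=1$; consequently $|\phi|$ is strictly decreasing on $(0,1]$ and strictly increasing on $[1,\infty)$.

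Since $t_i = \beta/\lambda_i$ is a decreasing function of $\lambda_i$, for fixed $\beta$ all the $t_i$ lie in $[\beta/\lambda_{\text{max}},\,\beta/\lambda_{\text{min}}]$, with the endpoints furnished by $\lambda_{\text{max}}$ and $\lambda_{\text{min}}$ respectively. Because $|\phi|$ is unimodal, its maximum over any interval is attained at an endpoint, so $\max_i |\phi(t_i)| = \max\big(|\phi(\beta/\lambda_{\text{max}})|,\,|\phi(\beta/\lambda_{\text{min}})|\big)$ and the intermediate eigenvalues may be discarded. This reduces \eqref{optbetaform} to minimizing $\psi(\beta) := \max\big(h_{\min}(\beta),h_{\max}(\beta)\big)$ over $\beta>0$, where $h_{\min}(\beta)=|\phi(\beta/\lambda_{\text{max}})|$ and $h_{\max}(\beta)=|\phi(\beta/\lambda_{\text{min}})|$.

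Next I would analyze the monotonicity of the two terms. On the regime $\lambda_{\text{min}} < \beta < \lambda_{\text{max}}$ one has $\phi(\beta/\lambda_{\text{min}})>0$ and $\phi(\beta/\lambda_{\text{max}})<0$, so $h_{\max}(\beta) = (\beta-\lambda_{\text{min}})/(\beta+\lambda_{\text{min}})$ is strictly increasing while $h_{\min}(\beta) = (\lambda_{\text{max}}-\beta)/(\lambda_{\text{max}}+\beta)$ is strictly decreasing. Outside this regime both arguments fall on the same side of $1$, and a short check shows $\psi$ is monotone there — decreasing on $(0,\lambda_{\text{min}}]$ and increasing on $[\lambda_{\text{max}},\infty)$ — so the global minimum must occur in the interior interval. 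There $\psi$ is the maximum of a strictly increasing and a strictly decreasing continuous function with $h_{\max}(\lambda_{\text{min}}^+)=0<h_{\min}(\lambda_{\text{min}}^+)$ and $h_{\min}(\lambda_{\text{max}}^-)=0<h_{\max}(\lambda_{\text{max}}^-)$; hence $\psi$ is minimized at the unique crossing where $h_{\min}(\beta)=h_{\max}(\beta)$.

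Finally I would solve the balancing equation $(\beta-\lambda_{\text{min}})/(\beta+\lambda_{\text{min}}) = (\lambda_{\text{max}}-\beta)/(\lambda_{\text{max}}+\beta)$; cross-multiplying and cancelling the linear terms collapses it to $\beta^2 = \lambda_{\text{min}}\lambda_{\text{max}}$, yielding $\beta^* = \sqrt{\lambda_{\text{min}}(\bfZ^T\bfQ\bfZ)\lambda_{\text{max}}(\bfZ^T\bfQ\bfZ)}$, exactly \eqref{optbeta}; one checks $\beta^*/\lambda_{\text{max}} = \sqrt{\lambda_{\text{min}}/\lambda_{\text{max}}}\le 1\le \sqrt{\lambda_{\text{max}}/\lambda_{\text{min}}} = \beta^*/\lambda_{\text{min}}$, confirming the crossing lies in the claimed regime. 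The main obstacle is not the closing algebra but the reduction step: one must argue carefully, via the unimodality of $|\phi|$, that the min-max is governed solely by $\lambda_{\text{min}}$ and $\lambda_{\text{max}}$, and that the crossing of the two endpoint terms is a genuine global minimizer rather than a mere stationary point — this is where the monotonicity bookkeeping on either side of $(\lambda_{\text{min}},\lambda_{\text{max}})$ does the real work.
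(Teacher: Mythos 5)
Your argument is correct: the reduction to the two extreme eigenvalues via the unimodality of $|\phi|$, the monotonicity bookkeeping outside $(\lambda_{\text{min}},\lambda_{\text{max}})$, and the balancing equation $\beta^2=\lambda_{\text{min}}\lambda_{\text{max}}$ are all sound (with the degenerate case $\lambda_{\text{min}}=\lambda_{\text{max}}$ trivially consistent). The paper does not write out a proof at all --- it refers to Theorem~4 of Ghadimi et al.~\cite{GhaTeiSha13} --- and that cited proof is precisely this min--max balancing of the two endpoint terms, so you have in effect supplied the same argument the paper leaves implicit.
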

\begin{proof}
The proof is similar to that of Theorem 4 in~\cite{GhaTeiSha13}, and is not repeated.
\end{proof}

\subsection{Relation to Recent Literature}\label{sec:relatedlit}
In this section, we relate the convergence results to those presented in recent papers.
\subsubsection*{Ghadimi et al~\cite{GhaTeiSha13}}\label{sec:linindepcons}
Ghadimi et al~\cite{GhaTeiSha13} prove global Q-linear convergence for,  
\beq\bal
\min\limits_{y} &\; \half y^TQy + q^Ty \\
\text{s.t.} &\; Ax \leq b
\eal\label{ghadimiqp}\eeq
where $Q$ is strictly convex and $A$ is full row rank. In this setting, inequalities that have 
finite lower and upper bounds cannot be handled.  Through the introduction of slacks, 
$s \geq 0$ the constraints can be converted to $Ax+s = b, s\geq 0$.  For this formulation, it is 
easy to show that Assumption~\ref{ass:redhesspd} and~\ref{ass:licq} hold.  Hence, the results 
in this paper can be easily applied to also obtain the global Q-linear convergence result.   
On the other hand, the analysis 
in this paper can also handle equality constraints and inequalities that have finite 
lower and upper bounds. Thus our analysis strictly generalizes that in~\cite{GhaTeiSha13}.

\subsubsection*{Raghunathan \& Di Cairano~\cite{RagSte13ACC}}
In the case of QPs with no equality constraints, Assumption~\ref{ass:redhesspd} implies that $\bfQ$ is positive definite on the full space.  In this setting, 
$\bfM_{\bfZ} = 2(\bfQ/\beta+\bfI_n)^{-1} - \bfI_n$, $c_F^* = 0$. 
We obtain the global Q-linear convergence rate explicitly as 
$\half(\|\bfM_{\bfZ}\|+1)$, which is the first row in Table~\ref{tab:deltacF}. 
Thus, by~\eqref{optbeta}  
 $\beta^* = \sqrt{\lambda_{\min}(\bfQ)\lambda_{\max}(\bfQ)}$ obtaining the results in~\cite{RagSte13ACC}.

\subsubsection*{Bauschke et al~\cite{BauBelNgh14} }

Bauschke et al~\cite{BauBelNgh14} show that rate of convergence of DR for finding 
a point in the intersection of two linear subspaces is equal to the cosine of the Friedrich's 
angle.  The analysis in \S\ref{sec:onestepconvergence} does not yield a contraction 
when $\bfQ = 0$ since this violates Assumption~\ref{ass:redhesspd}.  However, since 
strict complementarity $\bfw^* + \bflambda^* \neq 0$ is not assumed, our DR iterations may not 
reduce to alternating projection between subspaces.  We can provide global Q-linear 
convergence under additional assumptions.


\subsubsection*{Boley~\cite{Bol14}}
Boley~\cite{Bol14} considers the identical QP~\eqref{qpform} as in this paper.  The author 
analyzes the convergence of the sequence $\{\bfu^k\}$ and shows that ADMM has 
$4$ different convergence regimes based on the 
eigenvalues of a certain matrix $M^{[k]}$ being $< 1$ or equal to $1$.  
Further, under 
assumptions of strict complementarity, uniqueness of primal and dual solutions 
Boley~\cite[Theorem 6.4]{Bol14} established local 
Q-linear convergence.  No analysis of the global 
behavior is provided.  For instance, in regimes (b) and (d)~\cite[\S 5.2]{Bol14} it is identified 
that matrix $M^{[k]}$ has eigenvalue $1$ and this corresponds to a change in active set.  
However, no analysis of the convergence rate is provided. 
We interpret these assumptions and results in the context of our paper.  
\begin{itemize}
\item In our notation $M^{[k]} = \bfM - \bfE^k(\bfE^k)^T$.  Under 
Assumptions~\ref{ass:redhesspd} and~\ref{ass:licq} it is easy to show that $\|M^{[k]}\| < 1$ 
once $\activeset^k \subseteq \activeset^*$.  Consequently, the regime (c) 
in~\cite[\S5.2]{Bol14} cannot occur.  The regimes (b) and (d) in~\cite[\S5.2]{Bol14} are not 
separately identified in our analysis.  Thus, only 2 convergence regimes exist 
in our analysis.  
\item Our Assumption~\ref{ass:licq} is consistent with the assumption of unique dual solution in \cite[Theorem 6.4]{Bol14}.
\item Recall that based on our definition of 
$\activeset^k$~\eqref{optactiveset}, $\activeset^k \subseteq \activeset^*$ 
for all $k$ sufficiently large.  However, 
we cannot ensure that $\activeset^k = \activeset^*$ since we do not assume strict 
complementarity.  
\item We do not require strict complementarity but we require positive definiteness of 
reduced Hessian.  Our analysis leads to global Q-linear convergence as opposed to 
local Q-linear convergence in~\cite{Bol14}.
\item We also provide a computable convergence rate while no such specification exists 
in~\cite{Bol14}.
\end{itemize}

\subsection*{Giselsson and Boyd~\cite{GisBoy14}}

Giselsson and Boyd~\cite{GisBoy14} derive Q-linear  convergence  rate  bounds  for
Douglas-Rachford splitting under strong convexity and smoothness assumptions. 
In the context of the splitting that we consider in~\eqref{qpformsplit} the results of Theorem 2 
in~\cite{GisBoy14} are not applicable since the dual function is not strongly convex.  
However, the authors do propose a heuristic for the selection of the parameter $\beta$.  
Under Assumption~\ref{ass:redhesspd} this heuristic selection coincides 
with $\beta^*$ in Theorem~\ref{thm:optstepsize}.

\subsubsection*{Liang et al~\cite{LiaFadPey15}}
This paper was published after our initial submission but we include this for 
completeness.
Liang et al~\cite{LiaFadPey15} characterize the finite active set identification and local linear 
convergence for the DR.  We interpret the main assumptions in that paper in the context of 
the QP~\eqref{qpform} as: (a) strict complementarity (eqn (3.1) 
in~\cite{LiaFadPey15}) and (b) LICQ which is required to guarantee than the angle 
between the tangent spaces is bounded away from $0$.  No assumptions on the curvature 
of the Hessian of the objective is made.  The authors show local R-linear convergence 
provided that the cosine of the Friedrich's angle between the tangent spaces is 
bounded away from 1.  Our  
analysis in \S\ref{sec:onestepconvergence} does not yield a contraction in this case.  We 
relax the requirement of strict complementarity which implies that finite active set identification 
property does not hold.  In fact based on~\eqref{optactiveset} we only have $\activeset^k 
\subseteq \activeset^*$.  
However, we require positive definiteness of the reduced Hessian to yield Q-linear convergence.

\section{Infeasible QPs}\label{sec:infeasqp}

In this section we characterize the limit of ADMM iterates when the QP in~\eqref{qpform} is 
infeasible.  The main result is that $\{\bfy^k\}$ and $\{\bfw^k\}$ converge to minimizers of the Euclidean distance between the affine subspace defined by $\bfA\bfy = b$ 
and the set $\bfYset$ and the divergence in the iterates is restricted to the multipliers along the range space of the constraints.  We assume the following for the rest of this section.
\begin{assumption}\label{ass:infeasqp}
The QP in~\eqref{qpform} is infeasible and Assumptions~\ref{ass:feas}-\ref{ass:redhesspd} hold.
\end{assumption}

The roadmap of the analysis is as follows.  \S\ref{sec:infeasMinimizer} defines the 
infeasibility minimizer for~\eqref{qpform}.    \S\ref{sec:infeasLimitADMM} 
proves the main result on the sequence to which ADMM iterates converge when QP~\eqref{qpform} is infeasible.  Finally, we discuss termination conditions 
that can be checked for detecting infeasible problems in \S\ref{sec:infeasQPTermConds}.  

\subsection{Infeasibility Minimizer}\label{sec:infeasMinimizer}

From the optimality conditions for minimizer of infeasibility~\eqref{infeasmin} it is clear that 
the point $\stacktwo{\bfy^\circ}{\bfw^\circ}$ is only unique along the range of $\bfR$. There may exist multiple solutions when a direction 
along the range of $\bfZ$ is also a direction from $\bfw^\circ$ leading into the convex set $\bfYset$. In other words, 
$\stacktwo{\bfy^\circ+\bfZ\bfy_{\bfZ}}{\bfw^\circ+\bfZ\bfy_{\bfZ}}$ are also minimizers of the Euclidean distance between the hyperplane $\bfA\bfy = \bfb$ 
and the convex set $\bfYset$.  In the following we refine the notion of infeasibility minimizer while 
accounting for the effect of the objective function.  This is essential since in the 
ADMM iterations the update step for $\bfy$ does account for the objective function.  We prove the existence of $\bfy^{\bfQ}, \bflambda^{\bfQ}$ 
which is used subsequently in Theorem~\ref{thm:fixpointsinfeas} to show that the sequence 
$\{\stackthr{\bfy^\circ+\bfy^{\bfQ}}{\bfw^\circ+\bfw^{\bfQ}}{\frac{1}{\beta}(\gamma^k\bflambda^\circ+\bflambda^{\bfQ})}\}$ where $\gamma^k - \gamma^{k-1} = 1$ 
satisfies the ADMM iterations in~\eqref{admmiter_ext}.  

\begin{lemma}\label{lemm:infeasptshift}
Suppose Assumption~\ref{ass:infeasqp} holds. Then, there exists 
$\bfy^{\bfQ} \in \text{range}(\bfZ)$, $\bflambda^{\bfQ} \in \R^n$, with 
$\bfy^{\bfQ}$, $\bfZ^T\bflambda^{\bfQ}$ unique, such that
\beq\bal
\bfZ^T\bfQ(\bfy^\circ+\bfy^{\bfQ}) + \bfZ^T\bfq - \bfZ^T\bflambda^{\bfQ} &= 0 \\
\bflambda^{\bfQ} \perp (\bfw^\circ+\bfy^{\bfQ}) &\in \bfYset.
\eal\label{defineshift}\eeq
Furthermore,  $(\bflambda^{\bfQ} + \gamma\bflambda^\circ)$ $\forall\; \gamma \geq 0$ is also a solution to~\eqref{defineshift}.
\end{lemma}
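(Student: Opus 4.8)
The plan is to recognize~\eqref{defineshift} as the optimality (KKT) system of a convex optimization problem posed on the null space of $\bfA$, and to obtain existence and the stated uniqueness from Assumption~\ref{ass:infeasqp}, specifically from $\bfZ^T\bfQ\bfZ \succ 0$ (Assumption~\ref{ass:redhesspd}). Since $\bfy^{\bfQ} \in \text{range}(\bfZ)$, I would write $\bfy^{\bfQ} = \bfZ\bfy_{\bfZ}$ for some $\bfy_{\bfZ} \in \R^{n-m}$ and seek to realize~\eqref{defineshift} as the stationarity conditions of the projected QP
\beq\bal
\min\limits_{\bfy_{\bfZ}} &\; \half (\bfy^\circ + \bfZ\bfy_{\bfZ})^T\bfQ(\bfy^\circ + \bfZ\bfy_{\bfZ}) + \bfq^T(\bfy^\circ + \bfZ\bfy_{\bfZ}) \\
\text{s.t.} &\; \bfw^\circ + \bfZ\bfy_{\bfZ} \in \bfYset.
\eal\label{plan:redqp}\eeq
The objective of~\eqref{plan:redqp}, as a function of $\bfy_{\bfZ}$, has Hessian $\bfZ^T\bfQ\bfZ \succ 0$ and is therefore strongly convex; the feasible set $\{\bfy_{\bfZ} \;|\; \bfw^\circ + \bfZ\bfy_{\bfZ} \in \bfYset\}$ is closed and convex. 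Thus~\eqref{plan:redqp} has a \emph{unique} minimizer $\bfy_{\bfZ}^*$, giving the unique $\bfy^{\bfQ} = \bfZ\bfy_{\bfZ}^*$.

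\textbf{Recovering the multiplier and its uniqueness.}
Writing the KKT conditions of~\eqref{plan:redqp}, the stationarity condition multiplied into the null space reads exactly as the first line of~\eqref{defineshift} once we set $\bflambda^{\bfQ}$ to be the multiplier for the constraint $\bfw^\circ + \bfZ\bfy_{\bfZ} \in \bfYset$; the variational inequality defining that multiplier is precisely the second line $\bflambda^{\bfQ} \perp (\bfw^\circ + \bfy^{\bfQ}) \in \bfYset$. Here I must be careful: the multiplier associated with the reduced problem~\eqref{plan:redqp} naturally lives against the direction $\bfZ\bfy_{\bfZ}$, so what the reduced KKT system pins down is $\bfZ^T\bflambda^{\bfQ}$, not $\bflambda^{\bfQ}$ itself. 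This matches the claim exactly: $\bfy^{\bfQ}$ and $\bfZ^T\bflambda^{\bfQ}$ are unique, while the range-$\bfR$ component of $\bflambda^{\bfQ}$ is free. Since $\bfy^{\bfQ}$ is uniquely determined by strong convexity, and $\bfZ^T\bflambda^{\bfQ}$ is then forced by the first equation of~\eqref{defineshift}, the stated uniqueness follows.

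\textbf{The additive $\gamma\bflambda^\circ$ freedom.}
For the final sentence, I would verify directly that $\bflambda^{\bfQ} + \gamma\bflambda^\circ$ solves~\eqref{defineshift} for every $\gamma \geq 0$. The key inputs are the properties of the infeasibility minimizer from~\eqref{diffywcirc}: there we have $\bflambda^\circ \in \text{range}(\bfR)$, whence $\bfZ^T\bflambda^\circ = 0$, so adding $\gamma\bflambda^\circ$ does not disturb the first equation of~\eqref{defineshift}. For the variational inequality, I would use $\bflambda^\circ \perp \bfw^\circ \in \bfYset$ together with the structure of the box $\bfYset$: one checks that $(\bflambda^{\bfQ} + \gamma\bflambda^\circ)^T(\bfy' - (\bfw^\circ + \bfy^{\bfQ})) \geq 0$ for all $\bfy' \in \bfYset$ by combining the variational inequalities satisfied separately by $\bflambda^{\bfQ}$ (at $\bfw^\circ + \bfy^{\bfQ}$) and by $\gamma\bflambda^\circ$ (at $\bfw^\circ$), using that the active components of $\bfw^\circ$ coincide with those of $\bfw^\circ + \bfy^{\bfQ}$ in the relevant coordinates.

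\textbf{Main obstacle.}
The routine parts are existence and uniqueness of $\bfy^{\bfQ}$ from strong convexity. The delicate step is establishing the variational inequality for the \emph{sum} $\bflambda^{\bfQ} + \gamma\bflambda^\circ$: I need the sign pattern of $\bflambda^\circ$ (which comes from $\bfw^\circ - \bfy^\circ \perp \bfw^\circ \in \bfYset$ in~\eqref{diffywcirc}) to be compatible with the active set at $\bfw^\circ + \bfy^{\bfQ}$, rather than merely at $\bfw^\circ$. Since $\bfy^{\bfQ} \in \text{range}(\bfZ)$ moves the point within $\bfYset$, I expect that on every coordinate where $\bflambda^\circ_i \neq 0$ the point $\bfw^\circ_i$ sits at a bound, and I will need to argue that $\bfw^\circ_i + \bfy^{\bfQ}_i$ remains at the same bound (equivalently, that $\bfy^{\bfQ}_i = 0$ on those coordinates) so that the two variational inequalities add consistently — this compatibility is the heart of the argument.
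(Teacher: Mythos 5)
Your existence/uniqueness argument is essentially the paper's: both reduce to the strongly convex QP in $\bfy_{\bfZ}$ over the affine-constrained set $\{\bfy_{\bfZ} : \bfw^\circ + \bfZ\bfy_{\bfZ} \in \bfYset\}$, get a unique minimizer from $\bfZ^T\bfQ\bfZ \succ 0$, and recover $\bfZ^T\bflambda^{\bfQ}$ uniquely from the stationarity equation. (The paper is slightly more careful in invoking weak Slater's condition — the constraints are affine and $\bfy_{\bfZ}=0$ is feasible — to guarantee that a KKT multiplier exists at all; you should say this rather than just "writing the KKT conditions.")

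The genuine gap is in the final claim about $\bflambda^{\bfQ} + \gamma\bflambda^\circ$. You correctly dispose of the first equation via $\bfZ^T\bflambda^\circ = 0$, but for the variational inequality you propose an active-set compatibility argument — arguing that $\bfw^\circ_i + \bfy^{\bfQ}_i$ sits at the same bound as $\bfw^\circ_i$ wherever $\bflambda^\circ_i \neq 0$ — and you explicitly leave this as the unresolved "heart of the argument." This detour is unnecessary, and in the direction you need it is not straightforward: the componentwise sign analysis of the box only yields $\bflambda^\circ_i\,\bfy^{\bfQ}_i \geq 0$ for each $i$, which is the \emph{wrong} sign for the inequality
\[
\gamma(\bflambda^\circ)^T\bigl(\bfw' - \bfw^\circ - \bfy^{\bfQ}\bigr)
= \underbrace{\gamma(\bflambda^\circ)^T(\bfw'-\bfw^\circ)}_{\geq 0}
\;-\; \gamma(\bflambda^\circ)^T\bfy^{\bfQ} \;\geq\; 0 .
\]
The missing observation, which is the paper's one-line fix, is that the cross term vanishes identically: $(\bflambda^\circ)^T\bfy^{\bfQ} = 0$ because $\bflambda^\circ \in \text{range}(\bfR)$ by~\eqref{diffywcirc} while $\bfy^{\bfQ} \in \text{range}(\bfZ)$, and $\bfR^T\bfZ = 0$ by~\eqref{RZorthogonal}. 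With that orthogonality the two variational inequalities add with no compatibility condition on active sets, and the claim follows for every $\gamma \geq 0$. You already used exactly this range/null-space orthogonality to handle the first equation; you just need to apply it once more to the inner product in the variational inequality.
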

\begin{proof}
Since $\bfy^Q \in\text{range}(\bfZ)$, let $\bfy^{\bfQ} = \bfZ\bfy^{\bfQ}_{\bfZ}$ for some 
$\bfy^{\bfQ}_{\bfZ} \in \R^{n-m}$. Substituting this in~\eqref{defineshift} we obtain,
\[\bal
\bfZ^T\bfQ\bfZ\bfy^{\bfQ}_{\bfZ} + \bfZ^T(\bfq+\bfQ\bfy^\circ) - \bfZ^T\bflambda^{\bfQ} &= 0 \\
\bflambda^{\bfQ} \perp (\bfw^\circ+\bfZ\bfy^{\bfQ}_{\bfZ}) &\in \bfYset.
\eal\]
The above are the optimality conditions for,
\beq\bal
\min\limits_{\bfy^{\bfQ}_{\bfZ}} &\; \half(\bfy^{\bfQ}_{\bfZ})^T(\bfZ^T\bfQ\bfZ)\bfy^{\bfQ}_{\bfZ} + 
(\bfZ^T\bfq+\bfZ^T\bfQ\bfy^\circ)^T\bfy^{\bfQ}_{\bfZ} \\ 
\text{s.t.} &\; \bfw^\circ+\bfZ\bfy^{\bfQ}_{\bfZ} \in \bfYset.
\eal\label{qpform4shift}\eeq
The strict convexity of the QP~\eqref{qpform4shift} follows from 
Assumption~\ref{ass:redhesspd} and this guarantees uniqueness of $\bfy^{\bfQ}_{\bfZ}$, 
if one exists. Further, weak Slater's 
condition~\cite{BoydVandenbergheBook} holds for the 
QP~\eqref{qpform4shift} since the constraints in $\bfYset$ are affine and 
$\bfy^{\bfQ}_{\bfZ}=0$ is a feasible point.  The satisfaction of convexity and weak Slater's condition 
by QP~\eqref{qpform4shift} implies that strong duality holds for~\eqref{qpform4shift} 
and the claim on existence of $\bfy^{\bfQ}_{\bfZ}, \bflambda^{\bfQ}$ holds. The uniqueness of 
$\bfy^{\bfQ}$ follows from the uniqueness of $\bfy^{\bfQ}_{\bfZ}$ and the full column rank of $\bfZ$.  
The uniqueness of $\bfZ^T\bflambda^{\bfQ}$ follows from the first equation of~\eqref{defineshift}
and the uniqueness of $\bfy^{\bfQ}$.

To prove the remaining claim, consider the choice of 
$(\bflambda^{\bfQ} + \gamma\bflambda^\circ)$ as a solution 
to~\eqref{defineshift}. Satisfaction of the first equation in~\eqref{defineshift} 
follows from $\bflambda^{\circ} \in \text{range}(\bfR)$ by~\eqref{diffywcirc} and~\eqref{RZorthogonal}. As for the variational inequality in~\eqref{defineshift}, 
\[\bal
&\; (\bflambda^{\bfQ} + \gamma\bflambda^\circ)^T(\bfw' - (\bfw^\circ+\bfy^{\bfQ})) \\
=&\; \underbrace{(\bflambda^{\bfQ})^T(\bfw' - (\bfw^\circ+\bfy^{\bfQ}))}_{\geq\;0} +  \underbrace{ \gamma(\bflambda^\circ)^T(\bfw' - \bfw^\circ)}_{\geq\;0}  
-\underbrace{ \gamma(\bflambda^\circ)^T\bfy^{\bfQ} }_{=\;0} \geq 0 \;\forall\;\bfw' \in \bfYset
\eal\]
where the first term is non-negative by the variational 
inequality in~\eqref{defineshift}, the second term is non-negative by the  
variational inequality in~\eqref{diffywcirc} and the last term vanishes since 
$\bflambda^\circ \in \text{range}(\bfR)$ and $\bfy^{\bfQ} \in \text{range}(\bfZ)$. Thus, 
$(\bflambda^{\bfQ} + \gamma\bflambda^\circ)$ satisfies the variational inequality 
in~\eqref{defineshift} for all $\gamma \geq 0$.
\end{proof}

\subsection{Limit Sequence for ADMM}\label{sec:infeasLimitADMM}

The following result characterizes the limit behavior of ADMM iterates for infeasible instances of QP~\eqref{qpform} in terms of the sequence $\{\bfv^k\}$. 
\begin{lemma}\label{lemm:limitofvk}
Suppose Assumption \ref{ass:infeasqp} holds. Then,
\beq
\lim_{k \rightarrow \infty}\|\bfv^{k+1}-\bfv^k\|  = \omega \neq 0. \label{eqmvkinfeas}
\eeq 
Further, the ADMM iterates satisfy, 
\beq\bal
&\; \lim_{k \rightarrow \infty} \bfy^k = \bar{\bfy},\; \lim_{k \rightarrow \infty}\bfw^k = \bar{\bfw},\; \lim_{k \rightarrow \infty}\bfZ^T\bflambda^k = \bar{\bflambda}_{\bfZ}, \\ 
&\; \lim_{k \rightarrow \infty} \|\bfR^T(\bflambda^{k+1}-\bflambda^k)\| = \omega,
\text{ and } \bar{\bfw}-\bar{\bfy} \in \text{range}(\bfR).
\eal \label{limitofinfeasadmmiters}
\eeq
\end{lemma}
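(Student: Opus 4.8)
The plan is to reduce everything to the non-expansiveness estimates of Lemmas~\ref{lemm:itervwnonexpansive}--\ref{lemm:iternonexpansive} together with a linearly drifting reference trajectory built from Lemma~\ref{lemm:infeasptshift}. First I would pin down $\omega$. By Lemma~\ref{lemm:iternonexpansive}\eqref{eqmvk} (with $j=k-1$) the sequence $\{\|\bfv^{k+1}-\bfv^k\|\}$ is non-increasing, hence convergent to some $\omega\ge 0$. The two expressions for the iterate in \eqref{relateadmmdr}, $\bfv^k=\bfy^{k+1}-\bflambda^k=\bfw^{k+1}-\bflambda^{k+1}$, give the identity $\bfv^{k+1}-\bfv^k=\bfy^{k+2}-\bfw^{k+1}$. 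Since the $\bfy$-iterate always lies on the equality manifold ($\bfA\bfM=0$ and $\bfA\bfN=\bfI_m$ give $\bfA\bfy^{k+2}=\bfb$) while $\bfw^{k+1}\in\bfYset$, each difference connects the hyperplane $\{\bfA\bfy=\bfb\}$ to $\bfYset$, so $\|\bfv^{k+1}-\bfv^k\|\ge\|\bfy^\circ-\bfw^\circ\|$, the optimal value of \eqref{infeasmin}. Under Assumption~\ref{ass:infeasqp} the two sets are disjoint by \eqref{nofeaspt}, so $\|\bfy^\circ-\bfw^\circ\|>0$ and therefore $\omega\ge\|\bfy^\circ-\bfw^\circ\|>0$.

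For the convergence of the iterates I would introduce a reference trajectory $\{(\tilde{\bfy},\tilde{\bfw},\tilde{\bflambda}^k)\}$ whose primal parts $\tilde{\bfy},\tilde{\bfw}$ are constant with $\tilde{\bfy}-\tilde{\bfw}\in\text{range}(\bfR)$ and whose multiplier $\tilde{\bflambda}^k$ advances each step by a fixed vector in $\text{range}(\bfR)$, so that $\bfZ^T\tilde{\bflambda}^k$ is constant. Using $\bfy^{\bfQ},\bflambda^{\bfQ}$ from Lemma~\ref{lemm:infeasptshift} and the relations \eqref{diffywcirc}, I would verify that this trajectory is an \emph{exact} orbit of \eqref{admmiter}; the verification that $\proj_{\bfYset}$ fixes the constant $\tilde{\bfw}$ is where the variational inequality $\tilde{\bflambda}^k\perp\tilde{\bfw}\in\bfYset$ (valid at every step because $\bflambda^\circ,\bflambda^{\bfQ}$ keep the iterate in the normal cone) is used, and the stationarity equation of Lemma~\ref{lemm:infeasptshift} reproduces the $\bfv$-update. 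Comparing the true orbit $\{\bfv^k\}$, generated from $(\bfw^k,\bflambda^k)$, with $\{\tilde{\bfv}^k\}$, Lemma~\ref{lemm:itervwnonexpansive} in the refined form \eqref{ubnddiffv1} followed by the firm non-expansiveness \eqref{lemm:projnonexpansive} of the projection yields the sandwich
\beq\bal
\|\bfv^{k+1}-\tilde{\bfv}^{k+1}\|^2
&\le \|\bfw^{k+1}-\tilde{\bfw}\|^2_{\bfM} + \|\bflambda^{k+1}-\tilde{\bflambda}^{k+1}\|^2_{\bfI_n-\bfM} \\
&\le \|\bfw^{k+1}-\tilde{\bfw}\|^2 + \|\bflambda^{k+1}-\tilde{\bflambda}^{k+1}\|^2
\;\le\; \|\bfv^k-\tilde{\bfv}^k\|^2 .
\eal\eeq
In particular $\{\|\bfv^k-\tilde{\bfv}^k\|\}$ is non-increasing and converges to some $L$.

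The convergence of the primal iterates then follows by squeezing the display: its first and last members both tend to $L^2$, so the residual gap vanishes, giving $\|\bfw^{k+1}-\tilde{\bfw}\|^2_{\bfI_n-\bfM}+\|\bflambda^{k+1}-\tilde{\bflambda}^{k+1}\|^2_{\bfM}\to 0$. By Lemma~\ref{lemm:spectralMbnd}, $\bfI_n-\bfM\succ 0$ and $\bfZ^T\bfM\bfZ\succ 0$; the former forces $\bfw^k\to\tilde{\bfw}=:\bar{\bfw}$, and the latter forces $\bfZ^T(\bflambda^{k+1}-\tilde{\bflambda}^{k+1})\to 0$, i.e.\ $\bfZ^T\bflambda^k\to\bfZ^T\tilde{\bflambda}^k=:\bar{\bflambda}_{\bfZ}$. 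Because the $\bfy$-update \eqref{admmiter_ext_y} sees $\bflambda^k$ only through $\bfZ^T\bflambda^k$ (as $\bfM\bfR=0$), convergence of $\bfw^k$ and $\bfZ^T\bflambda^k$ yields $\bfy^k\to\bar{\bfy}$. Finally, \eqref{admmiter_ext_lam} gives $\bflambda^{k+1}-\bflambda^k=\bfw^{k+1}-\bfy^{k+1}\to\bar{\bfw}-\bar{\bfy}$; its $\bfZ$-component equals $\bfZ^T(\bflambda^{k+1}-\bflambda^k)\to 0$, so $\bar{\bfw}-\bar{\bfy}\in\text{range}(\bfR)$, whence $\|\bfR^T(\bflambda^{k+1}-\bflambda^k)\|\to\|\bar{\bfw}-\bar{\bfy}\|=\lim\|\bfv^{k+1}-\bfv^k\|=\omega$. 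The step I expect to be the main obstacle is establishing that the drift trajectory is a genuine orbit of \eqref{admmiter} and that the residual gap truly collapses: it is precisely the strict definiteness $\bfI_n-\bfM\succ 0$ and $\bfZ^T\bfM\bfZ\succ 0$ supplied by the positive-definite reduced Hessian (Assumption~\ref{ass:redhesspd}) that upgrades the mere boundedness of the shadows $\bfw^k,\bfy^k$ into convergence, in the spirit of best-approximation-pair results for inconsistent Douglas--Rachford splitting.
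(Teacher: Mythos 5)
Your argument is correct, but it proceeds along a genuinely different route from the paper's. The paper never introduces a reference trajectory: it takes successive differences of the DR recursion \eqref{drform}, writes $\Delta\bfv^{k+1}=\tfrac12(2\bfM-\bfI_n)\Delta\bfu^k+\tfrac12\Delta\bfv^k$, and uses $\|\bfM_{\bfZ}\|<1$ together with the convergence of $\{\|\Delta\bfv^k\|\}$ to $\omega>0$ to force $\bfZ^T\Delta\bfu^k\to 0$; from this it reads off $\bfy^{k+1}-\bfy^k\to 0$, $\bfw^{k+1}-\bfw^k\to 0$, $\bfZ^T(\bflambda^{k+1}-\bflambda^k)\to 0$ and then the stated limits. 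Your scheme instead runs a Fej\'er-type monotonicity argument of $\{\bfv^k\}$ against the exact drifting orbit built from Lemma~\ref{lemm:infeasptshift}, and extracts convergence from the collapse of the gap $\|\bfw^{k+1}-\tilde{\bfw}\|^2_{\bfI_n-\bfM}+\|\bflambda^{k+1}-\tilde{\bflambda}^{k+1}\|^2_{\bfM}$. This buys you two things the paper's proof does not deliver as cleanly: (a) a concrete justification of $\omega>0$ via the identity $\bfv^{k+1}-\bfv^k=\bfy^{k+2}-\bfw^{k+1}$ and the positive distance between $\{\bfA\bfy=\bfb\}$ and $\bfYset$, where the paper merely asserts $\omega>0$ from infeasibility; and (b) genuine convergence of $\bfw^k$ and $\bfZ^T\bflambda^k$ to identified limits, whereas the paper only establishes that successive differences vanish and then treats the limits $\bar{\bfy},\bar{\bfw},\bar{\bflambda}_{\bfZ}$ as existing. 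The price is that your proof leans on the reference trajectory being an exact orbit of \eqref{admmiter}, which is nontrivial; in the paper that verification is the content of Theorem~\ref{thm:fixpointsinfeas}\eqref{thminfeasqp}, proved \emph{after} this lemma but using only Lemma~\ref{lemm:infeasptshift} and \eqref{diffywcirc}, so you can front-load it without circularity --- but you should carry out that verification explicitly (the stationarity equation \eqref{defineshift} for the $\bfZ$-component of the $\bfy$-update, \eqref{diffywcirc} for the $\bfR$-component, and the variational inequality of Lemma~\ref{lemm:infeasptshift} with $\gamma=k-\gamma_1+1\ge 0$ for the projection step) rather than leave it as a remark.
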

\begin{proof}
From Lemma~\ref{lemm:iternonexpansive}\eqref{eqmvk}, we have that $\{\|\bfv^{k}-\bfv^{k-1}\|\}$ is a bounded, 
non-increasing sequence of nonnegative real numbers.  Hence, there exists a limit for the above sequence which we denote, $\lim_{k \rightarrow \infty}\|\bfv^k-\bfv^{k-1}\| = \omega$.  Since QP~\eqref{qpform} is infeasible by Assumption~\ref{ass:infeasqp}, we must necessarily have that $\omega > 0$. Consider the the following redefinition of the quantities
\begin{subequations}
\beq
\Delta\bfv^k = \bfv^k - \bfv^{k-1},\;\;\; \Delta\bfu^k = \bfu^k - \bfu^{k-1}. \label{defdeltaqts}
\eeq
Taking the differences over successive DR iterations~\eqref{drform} and substituting 
$\bfM = \bfZ\bfM_{\bfZ}\bfZ^T - \bfR\bfR^T$ from~\eqref{defMNMZ} we have that,
\[\bal
\Delta\bfv^{k+1} =&\; \half(\bfZ\bfM_{\bfZ}\bfZ^T - \bfR\bfR^T)\Delta\bfu^k + \half\Delta\bfv^k \\
\implies \|\Delta\bfv^{k+1}\| \leq&\; \half\|(\bfZ\bfM_{\bfZ}\bfZ^T - \bfR\bfR^T)\Delta\bfu^k\| + 
\half\|\Delta\bfv^k\| \\
\implies \|\Delta\bfv^{k+1}\| \leq&\; \half\sqrt{\|\bfM_{\bfZ}\|^2\|\bfZ^T\Delta\bfu^k\|^2 + 
\|\bfR^T\Delta\bfu^k\|^2} + \half\|\Delta\bfv^k\| 		
\eal\]
where the first inequality is obtained from triangle inequality and the second is obtained from 
Cauchy-Schwartz and~\eqref{RZorthogonal}. Since $\|\bfM_{\bfZ}\| < 1$,  it follows that $\|\Delta\bfv^{k+1}\| < \|\Delta\bfv^k\|$ for all $k$ such that $\|\bfZ^T\Delta\bfu^k\| \neq 0$.  
Since $\{\|\Delta\bfv^k\|\}$ converges to $\omega > 0$ it must be true that $\{\bfZ^T\Delta\bfu^k\} \rightarrow 0$.  From the update step for $\bfy$~\eqref{admmiter_ext_y} 
and~\eqref{relateadmmdr},
\beq\bal
\bfy^{k+1}-\bfy^k =&\; \bfM(\bfw^k+\bflambda^k) - \bfM(\bfw^{k-1}-\bflambda^{k-1})  \\
=&\; \bfM(\bfu^{k-1} - \bfu^{k-2}) = \bfZ(\bfZ^T\bfQ\bfZ/\beta+\bfI_{n-m})^{-1}\bfZ^T\Delta\bfu^{k-1}
\eal\eeq
where the final equality is obtained from substitution of $\bfM$.  Hence, $\{\bfy^{k+1}-\bfy^k\} \rightarrow 0$ from the convergence of $\{\bfZ^T\Delta\bfu^k\} \rightarrow 0$.  From the convergence of $\{\bfy^k\}$ and~\eqref{relateadmmdr},
\beq
\lim_{k \rightarrow \infty} \|\bfv^{k+1}-\bfv^k\| = \lim_{k \rightarrow \infty} \|(\bfy^{k+2}-\bflambda^{k+1})-(\bfy^{k+1}-\bflambda^k)\| = \lim_{k \rightarrow \infty} \|\bflambda^{k+1}-\bflambda^k\| = \omega . \label{limofvislimoflam}
\eeq
\end{subequations}
To show the convergence of $\bfw^k$ note by Lemma~\ref{lemm:iternonexpansive}\eqref{bnditersbyv} that,
\[
\|\stacktwo{\bfw^{k+1}}{{\bflambda}^{k+1}}-\stacktwo{\bfw^k}{{\bflambda}^k}\| \leq  \|\bfv^{k}-\bfv^{k-1}\| \implies \lim_{k \rightarrow \infty} \|\bfw^{k+1}-\bfw^k\| = 0 
\]
where the implication follows by taking limits on both sides and using~\eqref{limofvislimoflam}. Further,
\[
\left.\bal	&\; \lim_{k \rightarrow \infty}(\bfw^{k+1}-\bfw^k)  = 0 \\
		&\; \lim_{k \rightarrow \infty}\bfZ^T\Delta\bfu^{k+1}  = 0 \eal\right\} 
		\overset{\eqref{relateadmmdr}}{\implies}
		\lim_{k \rightarrow \infty}\bfZ^T(\bflambda^{k+1}-\bflambda^k) = 0. 
\]
Combining this with~\eqref{limofvislimoflam} we obtain the said results on $\bflambda$ in~\eqref{limitofinfeasadmmiters}.  From the update step~\eqref{admmiter_ext_lam} it follows that,
\[
\lim_{k \rightarrow \infty} (\bfw^{k+1}-\bfy^{k+1}) = \lim_{k \rightarrow \infty} (\bflambda^{k+1}-\bflambda^{k}) \in \text{range}(\bfR) \implies \bar{\bfw}-\bar{\bfy} \in \text{range}(\bfR)
\]
where the first inclusion follows from $\bfZ^T(\bflambda^{k+1}-\bflambda^k) \rightarrow 0$ and this proves the remaining claim in~\eqref{limitofinfeasadmmiters}.
\end{proof}

The next lemma establishes some properties of the ADMM iterate sequence.
\begin{lemma}\label{lemm:infeasconds}
Suppose Assumption~\ref{ass:infeasqp} holds.  Then the iterates $\{\stackthr{\bfy^k}{\bfw^k}{\bflambda^k}\}$ 
generated by the ADMM algorithm in~\eqref{admmiter_ext} satisfy,
\beq
\left\{\frac{ ({\bflambda}^k) ^T({\bfw}^k-{\bfy}^k)}{\|{\bflambda}^k\|\|{\bfw}^k-{\bfy}^k\|}\right\} \rightarrow 1.\label{infeasangleconds}
\eeq
\end{lemma}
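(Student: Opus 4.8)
The plan is to exploit the limiting behaviour established in Lemma~\ref{lemm:limitofvk}: $\bfy^k \to \bar{\bfy}$, $\bfw^k \to \bar{\bfw}$, $\bfZ^T\bflambda^k \to \bar{\bflambda}_{\bfZ}$, and $\lim_{k}\|\bfR^T(\bflambda^{k+1}-\bflambda^k)\| = \omega > 0$ with $\bar{\bfw} - \bar{\bfy} \in \text{range}(\bfR)$. The key observation is that the multiplier increment is precisely the vector appearing in the numerator of the cosine: from the update~\eqref{admmiter_ext_lam}, $\bflambda^k - \bflambda^{k-1} = \bfw^k - \bfy^k$, and this increment converges to the fixed \emph{nonzero} vector $\bar{\bfw} - \bar{\bfy}$.

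First I would show $\bflambda^k/k \to \bar{\bfw} - \bar{\bfy}$. Since the increments $\bflambda^k - \bflambda^{k-1} = \bfw^k - \bfy^k$ converge to $\bar{\bfw} - \bar{\bfy}$, the Stolz--Ces\`aro theorem (applied componentwise with denominator $b_k = k$, so $b_k - b_{k-1} = 1$) yields $\bflambda^k/k \to \bar{\bfw} - \bar{\bfy}$. This limit direction is nonzero because $\|\bar{\bfw} - \bar{\bfy}\| = \lim_k\|\bfw^k - \bfy^k\| = \lim_k\|\bflambda^{k+1}-\bflambda^k\| = \omega > 0$ by Lemma~\ref{lemm:limitofvk}; in particular $\|\bflambda^k\| \to \infty$ and the normalisation below never degenerates.

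Then I would divide numerator and denominator of the cosine by $k$ (using $\|\bflambda^k\|/k = \|\bflambda^k/k\|$) and pass to the limit:
\[
\frac{(\bflambda^k)^T(\bfw^k-\bfy^k)}{\|\bflambda^k\|\,\|\bfw^k-\bfy^k\|}
= \frac{(\bflambda^k/k)^T(\bfw^k-\bfy^k)}{\|\bflambda^k/k\|\,\|\bfw^k-\bfy^k\|}
\longrightarrow
\frac{(\bar{\bfw}-\bar{\bfy})^T(\bar{\bfw}-\bar{\bfy})}{\|\bar{\bfw}-\bar{\bfy}\|\,\|\bar{\bfw}-\bar{\bfy}\|} = 1,
\]
where I used $\bflambda^k/k \to \bar{\bfw}-\bar{\bfy}$ together with $\bfw^k - \bfy^k \to \bar{\bfw}-\bar{\bfy}$ and continuity of the inner product and norm. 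This is exactly the claimed convergence to $1$.

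The only delicate point — and the main obstacle — is justifying the linear growth $\bflambda^k/k \to \bar{\bfw}-\bar{\bfy}$ together with the nonvanishing of the limit direction, so that the limiting cosine avoids the indeterminate form $0/0$. Both are furnished by Lemma~\ref{lemm:limitofvk}: convergence of the increments $\bfw^k - \bfy^k$ supplies the Stolz--Ces\`aro hypothesis, while $\omega > 0$ guarantees a nonzero limit. An alternative that sidesteps Stolz--Ces\`aro is to split $\bflambda^k = \bfR\bfR^T\bflambda^k + \bfZ\bfZ^T\bflambda^k$: the null-space part $\bfZ^T\bflambda^k$ stays bounded (it converges to $\bar{\bflambda}_{\bfZ}$), whereas the range-space increments $\bfR^T(\bfw^k-\bfy^k)$ converge to $\bfR^T(\bar{\bfw}-\bar{\bfy})$ of fixed norm $\omega$, forcing $\bfR^T\bflambda^k$ to grow linearly in $k$; the bounded null-space contribution is then asymptotically negligible against the diverging range-space contribution, which drives both the numerator and the product of norms to be dominated by the common $k\,\omega^2$ behaviour and yields the same conclusion.
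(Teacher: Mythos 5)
Your proof is correct, and it takes a genuinely different and more elementary route than the paper's. Both arguments rest on the same inputs from Lemma~\ref{lemm:limitofvk} (convergence of $\bfy^k,\bfw^k$, the identity $\bflambda^k-\bflambda^{k-1}=\bfw^k-\bfy^k$, and $\|\bar{\bfw}-\bar{\bfy}\|=\omega>0$), but the paper proceeds by first reducing~\eqref{infeasangleconds} to the cosine against the \emph{fixed} vector $\bar{\bfw}-\bar{\bfy}$, then decomposing $\bflambda^{k+l}=\bflambda^k+\bar{\theta}^{k+l}(\bar{\bfw}-\bar{\bfy})+\bar{\bfnu}^{k+l}$ orthogonally to $\bar{\bfw}-\bar{\bfy}$ and driving the result through a chain of triangle/Cauchy--Schwarz estimates with a two-index limit ($k$ fixed, $l\to\infty$), using boundedness of $\bflambda^k+\bar{\bfnu}^{k+l}$ against the divergence of $\bar{\theta}^{k+l}$. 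You instead extract the asymptotic direction in one stroke: Ces\`aro averaging of the convergent increments gives $\bflambda^k/k\to\bar{\bfw}-\bar{\bfy}\neq 0$, after which the claim is immediate from homogeneity of the cosine and continuity of the inner product and norm. Your version is shorter, avoids the paper's somewhat delicate double-limit bookkeeping, and makes the mechanism transparent (the multiplier grows linearly along the infeasibility direction while everything transverse stays bounded); the paper's decomposition has the minor advantage of explicitly exhibiting the bounded transverse component $\bflambda^k+\bar{\bfnu}^{k+l}$, which foreshadows the identification of $\bfZ^T\bflambda^k$ with $\bfZ^T\bflambda^{\bfQ}$ in Theorem~\ref{thm:fixpointsinfeas}. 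Your closing alternative via the $\bfR/\bfZ$ splitting is essentially the paper's idea restated, so either write-up would be acceptable.
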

\begin{proof}
To show~\eqref{infeasangleconds}, suppose the following holds,
\begin{subequations}
\beq
\left\{\frac{ ({\bflambda}^k) ^T(\bar{\bfw}-\bar{\bfy})}{\|{\bflambda}^k\|\|\bar{\bfw}-\bar{\bfy}\|}\right\} \rightarrow 1 \label{infeasangleconds1}
\eeq
where $\bar{w},\bar{y}$ are as defined in~\eqref{limitofinfeasadmmiters}. 
Consider the following decomposition,
\beq
\bfw^k - \bfy^k = \theta^k(\bar{\bfw}-\bar{\bfy}) + \bfnu^k 
\text{ where, } (\bar{\bfw}-\bar{\bfy})^T\bfnu^k = 0.
\label{barwydecomp}
\eeq 
 By~\eqref{limitofinfeasadmmiters}, $\{\theta^k\} \rightarrow 1$, $\{\bfnu^k\} \rightarrow 0$. Using~\eqref{barwydecomp} we have,
\beq\bal
\frac{ ({\bflambda}^k) ^T({\bfw}^k-{\bfy}^k)}{\|{\bflambda}^k\|\|{\bfw}^k-{\bfy}^k\|} =&\; 
\frac{ ({\bflambda}^k) ^T(\theta^k(\bar{\bfw}-\bar{\bfy}) + \bfnu^k)}{\|{\bflambda}^k\|\|\theta^k(\bar{\bfw}-\bar{\bfy}) + \bfnu^k\|} 
\geq \frac{ ({\bflambda}^k) ^T(\theta^k(\bar{\bfw}-\bar{\bfy}) + \bfnu^k)}{\|{\bflambda}^k\|(\|\theta^k(\bar{\bfw}-\bar{\bfy})\| + \|\bfnu^k\|)} \\
=&\;\frac{ ({\bflambda}^k) ^T(\theta^k(\bar{\bfw}-\bar{\bfy}) + \bfnu^k)}{\|{\bflambda}^k\|\|\theta^k(\bar{\bfw}-\bar{\bfy})\| }\left(1 + \frac{\|\bfnu^k\|}{\|\theta^k(\bar{\bfw}-\bar{\bfy})\|}\right)^{-1}
\eal\label{proveinfeasangleconds}\eeq
where the first inequality follows from applying triangle inequality to $\|\theta^k(\bar{\bfw}-\bar{\bfy}) + \bfnu^k\|$ and the second equality 
by rearrangement of terms. 
As $k \rightarrow \infty$ the last term in~\eqref{proveinfeasangleconds} approaches~\eqref{infeasangleconds1} and hence,~\eqref{infeasangleconds} holds if~\eqref{infeasangleconds1} is true. 
To show~\eqref{infeasangleconds1} consider 
\beq\bal
&\; \bflambda^{k+l} = \bflambda^k + \sum\limits_{j=1}^{l}(\bfw^{k+j}-\bfy^{k+j}) = \bflambda^k + \bar{\theta}^{k+l}(\bar{\bfw}-\bar{\bfy}) + \bar{\bfnu}^{k+l} \\
\text{with} &\; \bar{\theta}^{k+l} = \sum_{j=1}^l\theta^{k+j}, \bar{\bfnu}^{k+l} = \sum_{j=1}^l\bfnu^{k+j}
\eal\label{decomplam}
\eeq
which is obtained by summing~\eqref{admmiter_ext_lam} over iterations 
$k,\ldots,k+l-1$ and substituting~\eqref{barwydecomp}.  Substituting~\eqref{decomplam} in~\eqref{infeasangleconds1} we obtain,
\[\bal
\frac{ ({\bflambda}^{k+l}) ^T(\bar{\bfw}-\bar{\bfy})}{\|{\bflambda}^{k+l}\|\|\bar{\bfw}-\bar{\bfy}\|} 
=&\; \frac{\left(\bflambda^k + \bar{\theta}^{k+l}(\bar{\bfw}-\bar{\bfy}) \right)^T(\bar{\bfw}-\bar{\bfy})}{\|\bflambda^k + \bar{\theta}^{k+l}(\bar{\bfw}-\bar{\bfy}) + \bar{\bfnu}^{k+l}\|\|\bar{\bfw}-\bar{\bfy}\|} \\
\geq&\; \frac{\left(\bflambda^k + \bar{\theta}^{k+l}(\bar{\bfw}-\bar{\bfy}) \right)^T(\bar{\bfw}-\bar{\bfy})}{(\|\bar{\theta}^{k+l}(\bar{\bfw}-\bar{\bfy})\| + \|\bflambda^k + \bar{\bfnu}^{k+l}\|)\|\bar{\bfw}-\bar{\bfy}\|} \\
=&\;\left(1 + \frac{(\bflambda^k)^T(\bar{\bfw}-\bar{\bfy})}{\bar{\theta}^{k+l}\|\bar{\bfw}-\bar{\bfy}\|^2}\right)\left(1 + \frac{\|\bflambda^k + \bar{\bfnu}^{k+l}\|}{\bar{\theta}^{k+l}\|\bar{\bfw}-\bar{\bfy}\|} \right)^{-1} \\
\geq&\; \left(1 - \frac{\|\bflambda^k\|}{\bar{\theta}^{k+l}\|\bar{\bfw}-\bar{\bfy}\|}\right)\left(1 + \frac{\|\bflambda^k + \bar{\bfnu}^{k+l}\|}{\bar{\theta}^{k+l}\|\bar{\bfw}-\bar{\bfy}\|} \right)^{-1}
\eal\]
where the first equality follows from $(\bfnu^k)^T(\bar{\bfw}-\bar{\bfy}) =0$ and the first inequality from applying the triangle inequality to  
$\|\bflambda^k + \bar{\theta}^{k+l}(\bar{\bfw}-\bar{\bfy}) + \bar{\bfnu}^{k+l}\|$, the second equality follows simply by rearranging and the final inequality from applying 
Cauchy-Schwarz inequality.  From~\eqref{decomplam},
\[
\bfZ^T\bflambda^{k+l}= \bfZ^T\bflambda^k + \bfZ^T\bar{\bfnu}^{k+l} \implies \{\bfZ^T\bflambda^k + \bfZ^T\bar{\bfnu}^{k+l}\} \rightarrow \bar{\bflambda}_{\bfZ}
\]
where the first equality follows from $(\bar{\bfw}-\bar{\bfy}) \in \text{range}(\bfR)$ and the implication follows from~\eqref{limitofinfeasadmmiters}. Further, $\bfnu^k \in \text{range}(\bfZ)$ since $(\bar{\bfw} - \bar{\bfy})^T\bfnu^k = 0$.  Hence,  
\[
\lim_{l \rightarrow \infty} \|\bflambda^k + \bar{\bfnu}^{k+l}\|^2 = \|\bfR^T\bflambda^k\|^2 + \lim_{l \rightarrow \infty} \|\bfZ^T\bflambda^k + \bfZ^T\bar{\bfnu}^{k+l}\|^2 = \|\bfR^T\bflambda^k\|^2 +
\|\bar{\bflambda}_{\bfZ}\|^2
\] 
is bounded.  On the other hand, $\{\bar{\theta}^{k+l}\}\rightarrow \infty$ as $l\rightarrow\infty$ 
since $\|\bflambda^{k+1}\| \rightarrow \infty$ by~\eqref{limitofinfeasadmmiters} and hence,  in the limit $l \rightarrow \infty$ we obtain~\eqref{infeasangleconds1}.   This completes the proof.
\end{subequations}
\end{proof}

Using Lemmas~\ref{lemm:infeasptshift} and~\ref{lemm:infeasconds} we can state the 
limiting behavior of the ADMM iterations~\eqref{admmiter_ext} when the QP~\eqref{qpform} 
is infeasible.

\begin{theorem}\label{thm:fixpointsinfeas}
Suppose Assumptions~\ref{ass:infeasqp} holds. Then, the following statements are true.
\begin{enumerate}[(i)]
\item\label{thminfeasqp} If QP~\eqref{qpform} is infeasible then,  
$\{\stackthr{\bfy^\circ+\bfy^{\bfQ}}{\bfw^\circ+\bfy^{\bfQ}}{\widehat{\bflambda}^{k}}\}$ 
is a sequence satisfying~\eqref{admmiter_ext} for $k \geq k'$ sufficiently large with, 
$\bfy^{\bfQ}, \bflambda^{\bfQ}$ as defined in~\eqref{defineshift} and, 
\beq
\widehat{\bflambda}^{k} = \frac{1}{\beta}(\bflambda^{\bfQ}+(k-\gamma_1)\bflambda^\circ), 
 \gamma_1 \leq k'.
\label{infeaslam}\eeq
\item\label{thmlamiterunbnd} If the ADMM algorithm~\eqref{admmiter_ext} generates 
$\{\stackthr{{\bfy}^k}{{\bfw}^k}{{\bflambda}^k}\}$ satisfying~\eqref{limitofinfeasadmmiters}
then, the QP~\eqref{qpform} is infeasible. Further, $\bar{\bfy} = \bfy^\circ+\bfy^{\bfQ}$, 
$\bar{\bfw} = \bfw^\circ+\bfw^{\bfQ}$ and $\bflambda^k$ satisifes~\eqref{infeaslam}.\label{thm:seqimpliesinfeasQP}
\end{enumerate}
\end{theorem}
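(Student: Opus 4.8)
\emph{Strategy.} I would prove part~(i) by direct substitution --- checking that the proposed constant-primal, linearly-drifting-multiplier sequence satisfies each of the three ADMM updates \eqref{admmiter_ext_y}, \eqref{admmiter_ext_w}, \eqref{admmiter_ext_lam} --- and part~(ii) by a limit argument that matches the accumulation point of a genuine ADMM run with the shifted infeasibility minimizer constructed in Lemma~\ref{lemm:infeasptshift}. Throughout, the two structural facts I would lean on are that $\bflambda^\circ \in \text{range}(\bfR)$ by \eqref{diffywcirc}, whence $\bfZ^T\bflambda^\circ = 0$ and $\bfM\bflambda^\circ = 0$, and that $\bfy^{\bfQ} \in \text{range}(\bfZ)$, whence $\bfA\bfy^{\bfQ} = 0$.

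\emph{Part (i).} Fixing $\bfy^k = \bfy^\circ + \bfy^{\bfQ}$, $\bfw^k = \bfw^\circ + \bfy^{\bfQ}$ and $\bflambda^k = \widehat{\bflambda}^k$ from \eqref{infeaslam}, I would verify the updates in turn. For the $\bfy$-update \eqref{admmiter_ext_y}, the linearly growing multiple of $\bflambda^\circ$ inside $\widehat{\bflambda}^k$ is annihilated by $\bfM$, so the right-hand side is in fact independent of $k$; projecting the optimality condition of the $\bfy$-subproblem onto $\text{range}(\bfZ)$ collapses to the first equation of \eqref{defineshift}, while its $\text{range}(\bfR)$ component is accounted for by $\bfA(\bfy^\circ + \bfy^{\bfQ}) = \bfb$. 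For the $\bfw$-update \eqref{admmiter_ext_w}, I would compute that the residual $\bfw^{k+1} - (\bfy^{k+1} - \bflambda^k)$ is a positive multiple of $\bflambda^{\bfQ} + \gamma\bflambda^\circ$ with $\gamma \geq 0$ for $k$ large, and then invoke the projection characterization \eqref{defprojasvi} together with the \emph{extended} variational inequality of Lemma~\ref{lemm:infeasptshift} (namely $\bflambda^{\bfQ} + \gamma\bflambda^\circ \perp (\bfw^\circ + \bfy^{\bfQ}) \in \bfYset$ for every $\gamma \geq 0$) to conclude $\proj_{\bfYset}(\bfy^{k+1} - \bflambda^k) = \bfw^\circ + \bfy^{\bfQ}$; the requirement $\gamma \geq 0$ is precisely what forces ``$k$ sufficiently large''. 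The multiplier update \eqref{admmiter_ext_lam} is then immediate, since $\bfw^{k+1} - \bfy^{k+1} = \bfw^\circ - \bfy^\circ = \bflambda^\circ$ is exactly the per-step drift built into $\widehat{\bflambda}^k$.

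\emph{Part (ii).} First I would establish infeasibility: hypothesis \eqref{limitofinfeasadmmiters} together with Lemma~\ref{lemm:limitofvk} gives $\|\bfR^T(\bflambda^{k+1}-\bflambda^k)\| \to \omega \neq 0$, so $\{\bflambda^k\}$ is unbounded; were \eqref{qpform} feasible it would possess a solution under Assumption~\ref{ass:redhesspd}, hence a fixed point by Lemma~\ref{thm:fixpointsminima}, and the nonexpansive iteration would force $\omega = 0$, a contradiction. Next I would identify the limit: every $\bfy^k$ satisfies $\bfA\bfy^k = \bfb$ and every $\bfw^k \in \bfYset$, so $\bfA\bar{\bfy} = \bfb$, $\bar{\bfw} \in \bfYset$, and $\bar{\bfw} - \bar{\bfy} \in \text{range}(\bfR)$ by \eqref{limitofinfeasadmmiters}. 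Passing to the limit in the $\text{range}(\bfZ)$-projection of the $\bfy$-subproblem optimality condition --- using that only $\bfZ^T\bflambda^k$ converges (to $\bar{\bflambda}_{\bfZ}$), while the divergence lives in $\text{range}(\bfR)$ and is removed by $\bfZ^T$ --- reproduces the first equation of \eqref{defineshift}, and lets me write $\bar{\bfy} = \bfy^\circ + \bfy^{\bfQ}$, $\bar{\bfw} = \bfw^\circ + \bfy^{\bfQ}$ with $\bfy^{\bfQ} \in \text{range}(\bfZ)$ the unique minimizer of Lemma~\ref{lemm:infeasptshift}.

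\emph{Main obstacle.} The single delicate step is recovering the variational inequality for the limiting pair, because the natural relation $(\bflambda^{k+1})^T(\bfw' - \bfw^{k+1}) \geq 0$ supplied by Lemma~\ref{lemm:varineqholds} involves the \emph{divergent} multiplier and cannot be passed to the limit directly. The fix is to normalize by $\|\bflambda^{k+1}\| \to \infty$ and use the asymptotic alignment $\bflambda^k/\|\bflambda^k\| \to (\bar{\bfw}-\bar{\bfy})/\|\bar{\bfw}-\bar{\bfy}\|$ from Lemma~\ref{lemm:infeasconds}; the inequality then survives the limit as $(\bar{\bfw}-\bar{\bfy})^T(\bfw' - \bar{\bfw}) \geq 0$ for all $\bfw' \in \bfYset$, i.e.\ $(\bar{\bfw}-\bar{\bfy}) \perp \bar{\bfw} \in \bfYset$. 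This completes the identification with \eqref{infeasmin}--\eqref{defineshift}; the form \eqref{infeaslam} then follows because the per-step drift of $\bflambda^k$ equals $\bar{\bfw}-\bar{\bfy} = \bflambda^\circ \in \text{range}(\bfR)$ while $\bfZ^T\bflambda^k \to \bar{\bflambda}_{\bfZ}$ pins down the bounded component through $\bfZ^T\bflambda^{\bfQ}$.
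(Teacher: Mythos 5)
Your proposal is correct and follows essentially the same route as the paper: part~(i) by direct substitution into the three updates, splitting the $\bfy$-update along $\text{range}(\bfR)$ and $\text{range}(\bfZ)$ (where $\bfM\bflambda^\circ=0$ kills the drifting term and the $\bfZ$-component collapses to \eqref{defineshift}) and verifying the $\bfw$-update via the extended variational inequality of Lemma~\ref{lemm:infeasptshift} with $\gamma\ge 0$ for $k$ large; part~(ii) by using the alignment of Lemma~\ref{lemm:infeasconds} to pass the variational inequality of Lemma~\ref{lemm:varineqholds} to the limit and then invoking the uniqueness in Lemma~\ref{lemm:infeasptshift}. The only deviations are cosmetic: you normalize $\bflambda^k$ by its norm where the paper uses an explicit orthogonal decomposition with an $\epsilon$-bound (same idea), and your separate contradiction argument for infeasibility (via asymptotic regularity of the averaged DR operator, a standard fact the paper does not state) is sound but redundant, since the paper deduces infeasibility directly from the limit pair satisfying \eqref{diffywcirc} with $\bar{\bfw}\neq\bar{\bfy}$.
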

\begin{proof} 
\begin{subequations}
Consider the claim in~\eqref{thminfeasqp}. For proving that \eqref{admmiter_ext_y} holds, we 
need to show that,
\beq
\bfy^\circ+\bfy^{\bfQ}-\bfM(\bfw^\circ+\bfy^{\bfQ}+\widehat{\bflambda}^k-\tilde{\bfq}) - \bfN\bfb = 0.\label{admmiter_ext_y_condn}
\eeq
Multiplying the left hand side of~\eqref{admmiter_ext_y_condn} by $\bfR^T$, using $\bfR^T\bfM = 0$, $\bfR^T\bfy^{\bfQ} = 0$ and simplifying,
\beq
\bfR^T\bfy^\circ - (\bfA\bfR)^{-1}\bfb = (\bfA\bfR)^{-1}(\bfA\bfR\bfR^T\bfy^\circ - \bfb) = 0 \label{admmiter_ext_y_condnR}
\eeq
where the last equality follows from~\eqref{diffywcirc}. 
Multiplying the left hand side of~\eqref{admmiter_ext_y_condn} by $\bfZ^T$, and substituting for 
$\bfZ^T\bfM = \widehat{\bfM}\bfZ^T$ 
and  $\bfZ^T\bfN\bfb = -(\widehat{\bfM}\bfZ^T\bfQ/\beta)\bfR\bfR^T(\bfy^\circ+\bfy^{\bfQ})$ 
where $\widehat{\bfM} = (\bfZ^T\bfQ\bfZ/\beta+\bfI_{n-m})^{-1}$ 
we obtain,
\beq\bal
&\;\bfZ^T(\bfy^\circ + \bfy^{\bfQ}) - \widehat{\bfM}\bfZ^T(\bfw^\circ + \bfy^{\bfQ} + \widehat{\bflambda}^k - \tilde{\bfq}) + \widehat{\bfM}\bfZ^T(\bfQ/\beta)\bfR\bfR^T(\bfy^\circ+\bfy^{\bfQ}) \\
=&\;\widehat{\bfM}\left(\bal  (\bfZ^T\bfQ\bfZ/\beta+\bfI_{n-m})\bfZ^T(\bfy^\circ+\bfy^{\bfQ}) \\
				- \bfZ^T(\bfw^\circ+\bfy^{\bfQ} +\widehat{\bflambda}^k- \tilde{\bfq}) + \bfZ^T(\bfQ/\beta)\bfR\bfR^T(\bfy^\circ+\bfy^{\bfQ}) \eal\right) \\
=&\;\widehat{\bfM}
				\bfZ^T(\bfQ/\beta)(\bfy^\circ+\bfy^{\bfQ}) + \bfZ^T(\bfy^\circ+\bfy^{\bfQ}) 
				- \bfZ^T(\bfw^\circ + \bfy^{\bfQ} + {\bflambda}^{\bfQ} - \tilde{\bfq}) \\
=&\;(\widehat{\bfM}/\beta)\left(\bfZ^T\bfQ(\bfy^\circ+\bfy^{\bfQ}) + \bfZ^T\bfq - \bfZ^T\bflambda^{\bfQ} \right) = 0
\eal\label{admmiter_ext_y_condnZ}\eeq
where the first equality follows simply by removing $\widehat{\bfM}$ as the common multiplicative factor, the second equality 
follows from~\eqref{RZfullspace}, the third equality from~\eqref{diffywcirc},~\eqref{infeaslam}, and the final equality from~\eqref{defineshift}.
%
Combining~\eqref{admmiter_ext_y_condnR} and ~\eqref{admmiter_ext_y_condnZ}
 shows that the sequence satisfies~\eqref{admmiter_ext_y_condn}. 
To prove that~\eqref{admmiter_ext_w} holds consider for any $\bfw' \in \bfYset$,
\beq\bal
&\;\left(\bfw^\circ+\bfy^{\bfQ} - \bfy^\circ-\bfy^{\bfQ} + \widehat{\bflambda}^{k}\right)^T\left(\bfw'-
\bfw^\circ-\bfy^{\bfQ}\right) \\
=&\; \left(\bfw^\circ - \bfy^\circ + \widehat{\bflambda}^{k}\right)^T\left(\bfw'-\bfw^\circ-\bfy^{\bfQ}\right) \\
=&\;\frac{1}{\beta}\left(\bflambda^{\bfQ}+(k-\gamma_1+1)\bflambda^\circ\right)^T
(\bfw'-\bfw^\circ-\bfy^{\bfQ}) \\
=&\; \frac{1}{\beta}\left(\bflambda^{\bfQ}+(k-\gamma_1+1)\bflambda^\circ\right)^T
\left(\bfw'-\bfw^\circ-\bfy^{\bfQ}\right) 
\geq\; 0 \\
\eal\eeq
where the second equality follows from~\eqref{diffywcirc} and~\eqref{infeaslam}, 
and the inequality follows from Lemma~\ref{lemm:infeasptshift} by noting that $\gamma = 
(k-\gamma_1+1) \geq 0$. Thus,
$\bfw^\circ+\bfw^{\bfQ} = \proj_{\bfYset}\left(\bfy^\circ+\bfy^{\bfQ}-\widehat{\bflambda}^k\right)$ 
holds and the sequence in the claim satisfies~\eqref{admmiter_ext_w}.
Finally, the definition of $\widehat{\bflambda}^{k}$ in~\eqref{infeaslam} implies 
that~\eqref{admmiter_ext_lam} holds, and 
thus~\eqref{thminfeasqp} is proved.

Consider the claim in part~\eqref{thmlamiterunbnd}. From~\eqref{infeasangleconds} we have that for any $\epsilon > 0$ there exists $k_\epsilon$ such that for all 
$k \geq k_\epsilon$,
\beq\bal
\frac{(\bflambda^k)^T(\bfw^k-\bfy^k)}{\|\bfw^k-\bfy^k\|^2} \geq (1 - \epsilon)\frac{\|\bflambda^k\|}{\|\bfw^k-\bfy^k\|}.
\eal\label{54e}\eeq
Consider the following decomposition,
\beq
\bflambda^k = \theta^k(\bfw^k-\bfy^k) + \bfmu^k 
\text{ with } (\bfw^k-\bfy^k)^T\bfmu^k = 0.  
\eeq
Then, from~\eqref{54e} we have that,
\beqar
\theta^k = \frac{(\bflambda^k)^T(\bfw^k-\bfy^k)}{\|\bfw^k-\bfy^k\|^2} \geq (1 - \epsilon)\frac{\|\bflambda^k\|}{\|\bfw^k-\bfy^k\|}, \label{defalphak}\\
\|\bfmu^k\| \leq \sqrt{1 - (1-\epsilon)^2} \|\bflambda^k\| \label{defmuk}.
\eeqar
Then for all $\bfw' \in \bfYset$ we have that,
\beq\bal
(\bfw^k-\bfy^k)^T(\bfw' - \bfw^k) 
=&\; \frac{1}{\theta^k}\underbrace{(\bflambda^{k})^T(\bfw' - \bfw^k)}_{\geq 0} - \frac{1}{\theta^k}(\bfmu^k)^T(\bfw' - \bfw^k) \\
\geq&\; -\frac{\sqrt{1 - (1-\epsilon)^2}}{1 - \epsilon}\|\bfw^k-\bfy^k\|\|\bfw'-\bfw^k\| 
\eal\eeq
where the inequality follows from Lemma~\ref{lemm:varineqholds}, the Cauchy-Schwarz inequality and the 
substitution of~\eqref{defalphak} and~\eqref{defmuk}.
Hence,
\beq\bal
&\;\lim\limits_{k \rightarrow \infty}\frac{(\bfw^k-\bfy^k)^T(\bfw' - \bfw^k) }{\|\bfw^k-\bfy^k\|\|\bfw'-\bfw^k\|} \geq 0  \;\forall\; \bfw' \in \bfYset\\
\implies&\;  \frac{(\bar{\bfw}-\bar{\bfy})^T(\bfw' - \bar{\bfw}) }{\|\bar{\bfw}-\bar{\bfy}\|\|\bfw'-\bar{\bfw}\|} \geq 0 \;\forall\; \bfw' \in \bfYset.
\eal\eeq
and $(\bar{\bfw}-\bar{\bfy}) \perp \bar{\bfw} \in \bfYset$. Since $\bfA\bar{\bfy} = \bfb$, $\bar{\bfw} \in \bfYset$ we have that 
$(\bar{\bfy}, \bar{\bfw})$ satisfies~\eqref{diffywcirc} 
and hence, the QP~\eqref{qpform} is infeasible.  
From uniqueness of the range space component in~\eqref{infeasmin},  
$\bfR^T\bar{\bfy} = \bfR^T\bfy^\circ$, $\bfR^T\bar{\bfw} = \bfR^T\bfw^\circ$ and also $\bfZ^T\bar{\bfw} = \bfZ^T\bar{\bfy}$. From the update steps in the ADMM~\eqref{admmiter_ext} we have that,
\beq\bal
\bfZ^T\left(\bfQ\left(\bfy^\circ+\bfZ\bfZ^T(\bar{\bfy}-\bfy^\circ)\right) + \bfq - \beta\bflambda^k\right) = 0, \\
\bflambda^k \perp \bfw^\circ+\bfZ\bfZ^T(\bar{\bfw}-\bfw^\circ) \in \bfYset,
\eal\label{iterlimconds}\eeq
for all $k$ sufficiently large, where first equation follows by replacing $\bfy^{\bfQ}$, 
$\bflambda^{\bfQ}$ by $\bfZ\bfZ^T(\bar{\bfy}-\bfy^\circ)$, $\beta\bflambda^k$,  respectively, 
in~\eqref{admmiter_ext_y_condnZ}, and the second condition follows from 
Lemma~\ref{lemm:varineqholds}. 
The conditions in~\eqref{iterlimconds} are precisely those in~\eqref{defineshift} and hence, Lemma~\ref{lemm:infeasptshift} applies to yield that $\bfZ\bfZ^T(\bar{\bfy}-\bfy^\circ) = \bfZ\bfZ^T(\bar{\bfw}-\bfw^\circ) = \bfy^{\bfQ}$, 
$\bfZ^T\bflambda^k = \bfZ^T\bflambda^{\bfQ}$ . 
Thus, $\bar{\bfy}= \bfy^\circ+\bfy^{\bfQ}$, $\bar{\bfw} = \bfw^\circ+\bfy^{\bfQ}$,  
$\bflambda^k$ satisfies~\eqref{infeaslam} and the claim holds.
\end{subequations}
\end{proof}

\subsection{Termination Conditions}\label{sec:infeasQPTermConds}

\begin{subequations}\label{termconds}
The termination condition in ADMM for determining an $\epsilon_o$-optimal solution is~\cite{Boyadmm},
\beq
\max(\beta\|\bfw^{k}-\bfw^{k-1}\|,\|\bflambda^{k}-\bflambda^{k-1}\|) \leq \epsilon_o.
\label{opttermconds}
\eeq
In the case of infeasible QPs, Theorem~\ref{thm:fixpointsinfeas} shows that the multipliers do not converge in the limit and increase in norm at every iteration by 
$\|\bflambda^\circ\|/\beta$.  Further, the multipliers in the limit are aligned along $\bfw^\circ-\bfy^\circ$ according to~\eqref{infeasangleconds}.  Hence, a strict termination condition 
is to monitor for the satisfaction of the conditions in~\eqref{limitofinfeasadmmiters} and~\eqref{infeasangleconds}.  A more practical approach is to consider the following set of conditions:
\beqar
\max(\beta\|\bfw^{k}-\bfw^{k-1}\|,\|\bflambda^{k}-\bflambda^{k-1}\|) > \epsilon_o. \label{termcond:err_opt}
\eeqar
\beqar
\frac{\max(\|\bfy^{k}-\bfy^{k-1}\|,\beta\|\bfw^{k}-\bfw^{k-1}\|)}{\max(\beta\|\bfw^{k}-\bfw^{k-1}\|,\|\bflambda^{k}-\bflambda^{k-1}\|)}
 \leq \epsilon_{r} \label{termcond:err_ratio}
\eeqar
\beqar
\frac{(\bflambda^k)^T(\bfw^k - \bfy^k)}{\|\bflambda^k\|\|\bfw^k-\bfy^k\|} \geq 1 - \epsilon_{a} \label{termcond:angle}
\eeqar
\beqar
\bflambda^k \circ (\bfw^k-\bfy^k) \geq 0 \text{ or } \frac{\|\Delta\bfv^{k}-\Delta\bfv^{k-1}\|}{\|\bfv^{k}\|} \leq \epsilon_{v} \label{termcond:align_blowup}
\eeqar
\end{subequations}
where, $0\leq \epsilon_o, \epsilon_r, \epsilon_a, \epsilon_v  \ll 1$, $\circ$ represents the 
componentwise multiplication (Hadamard product) and $\Delta\bfv^k = \bfv^k - \bfv^{k-1}$. 
The left hand side~\eqref{termcond:err_opt} is the error criterion used for termination in feasible QPs~\cite{Boyadmm}.  Condition~\eqref{termcond:err_opt}  
requires that the optimality conditions are not satisfied to a tolerance of $\epsilon_o$, while~\eqref{termcond:err_ratio} requires that the change in $\bfy, \bfw$ iterates 
to be much smaller than the change in the $\bfw, \bflambda$ iterates.  In the case of a feasible QP all the iterates 
converge and nothing specific can be said about this ratio.  However, as shown in 
Theorem~\ref{thm:fixpointsinfeas} the multiplier iterates change by a constant vector in the case 
of an infeasible QP.  Hence, we expect the ratio in~\eqref{termcond:err_ratio} to be small in the 
infeasible case 
while~\eqref{termcond:err_opt} is large.  The condition~\eqref{termcond:angle} checks for the satisfaction of~\eqref{infeasangleconds} to a tolerance of $\epsilon_a$.  The first condition in~\eqref{termcond:align_blowup}  
checks that each component of $\bflambda^k$ and $\bfw^k-\bfy^k$ have the same sign.  In a 
sense, this is a stricter requirement of the angle 
condition~\eqref{termcond:angle}.  In our numerical experiments we have observed that the 
satisfaction of this condition can be quite slow to converge when the iterates are 
far from a solution.  In such instances, we have also observed that, the quantity $\|\bfv^k\|$ has 
actually diverged to a large value.  To remedy this we also monitor the ratio of 
$\|\Delta\bfv^k-\Delta\bfv^{k-1}\|$ (which converges to $0$, refer Lemma~\ref{lemm:limitofvk}) 
to $\|\bfv^k\|$ ($\|\bfv^k\| \rightarrow \infty$).  We recommend following
parameter setting: $\epsilon_o = 10^{-6}, \epsilon_r = 10^{-3}, \epsilon_a = 10^{-3}, 
\epsilon_v = 10^{-4}$.  While these values have worked well on a large number of problems, 
these constants might have to be modified depending on the conditioning of the problem.

\section{Numerical Experiments}\label{sec:numerical}
In this section, we describe numerical experiments that validate the theoretical results 
obtained in the earlier sections.  \S\ref{sec:tightFeasqp} probes the tightness of the worst-
case convergence rates by varying the choice of initial iterates.  The effect of 
scaling of variables in the problem is explored in \S\ref{sec:scalingFeasqp}.  This particular 
scaling is important since this does not affect the computational complexity of computing the 
projection operation.  
Optimal choice of ADMM parameter and its validity for QPs with different scaling is presented 
in \S\ref{sec:optbetaFeasqp}.  \S~\ref{sec:nonstrictqp} verifies the claims of the paper on 
an example where strict complementarity is not satisfied at the solution. 
Finally, the behavior of ADMM iterates on infeasible 
QPs is presented in \S\ref{sec:Infeasqp}.

\subsection{Feasible QP - Tightness of worst-case convergence rates}\label{sec:tightFeasqp}
Consider the following QP,
\beq\bal
\min\limits_{\bfy =(y_1,y_2)} 
&\; \half \bfy^T\bfy  + \begin{bmatrix} 0 & -3 \end{bmatrix}\bfy \\
\text{s.t.} &\; \begin{bmatrix} 1 & 1 \end{bmatrix}\bfy = 1,\; \bfy \geq 0.
\eal\label{qpex1}\eeq
The optimal solution to the QP in~\eqref{qpex1} is $\bfy^* = (0, 1)$ and the multipliers for the 
non-negativity constraints are $\bflambda^* = (2,0)$.  
Thus, $\bfR = \frac{1}{\sqrt{2}}[1 \; 1]^T$ and $\bfE^* = [1 \;0]^T$.  
It is easily verified that the Assumptions~\ref{ass:feas}-\ref{ass:licq} are satisfied. 
Further, the cosine of the Friedrich's angle between $\bfR$ and $\bfE^*$ is 
$c_F^* = \frac{1}{\sqrt{2}}$.  Figure~\ref{fig:feasqp} plots the convergence rate $\|\bfv^k - \bfv^*\|/\|
\bfv^{k-1}-\bfv^*\|$ obtained from the ADMM iterations and the worst-case bound obtained in 
\S\ref{sec:onestepconvergence} against the iteration index $k$ for different choices of initial 
iterates.  The worst-case convergence bound of $\delta(\|\bfM_{\bfZ}\|,1.0,\alpha^{\max}(\|\bfv^k-\bfv^*\|))$ is plotted for iterations prior to $\activeset^k \subseteq \activeset^*$, while  
$\delta(\|\bfM_{\bfZ}\|,c_F^*,\alpha^k)$ is plotted for iterations following 
$\activeset^k \subseteq \activeset^*$. 
In all $3$ cases,  $\bfw^0 = (0,0)$, the ADMM step-size parameter $\beta$ is chosen as 
$1$ while $\bflambda^0$ is varied.  The ADMM iterations~\eqref{admmiter_ext} are 
executed until the 
satisfaction of the optimality termination conditions in~\eqref{opttermconds} to a tolerance of $
\epsilon_o = 10^{-6}$.  Figure~\ref{fig:lam1} shows that the convergence bound 
$\delta(\|\bfM_{\bfZ}\|,c_F^*,1.0)$ is attained at some ADMM iterations, thus the  
bound is indeed tight. The worst-case convergence rate bound $\delta(\|\bfM_{\bfZ}\|,1.0,\alpha^{\max}(\|\bfv^k-\bfv^*\|))$ is shown to be tight for iterations prior to the identification 
of the active-set in Figure~\ref{fig:lam100}.  Figure~\ref{fig:lam10} shows that for the choice of 
$\bflambda^0 = (30,30)$, the bound is not attained by the ADMM algorithm for any iteration.

\begin{figure}[ht]
\centering
\subfigure[$\bflambda^0 = (3,3)$]{
\includegraphics[scale=0.26]{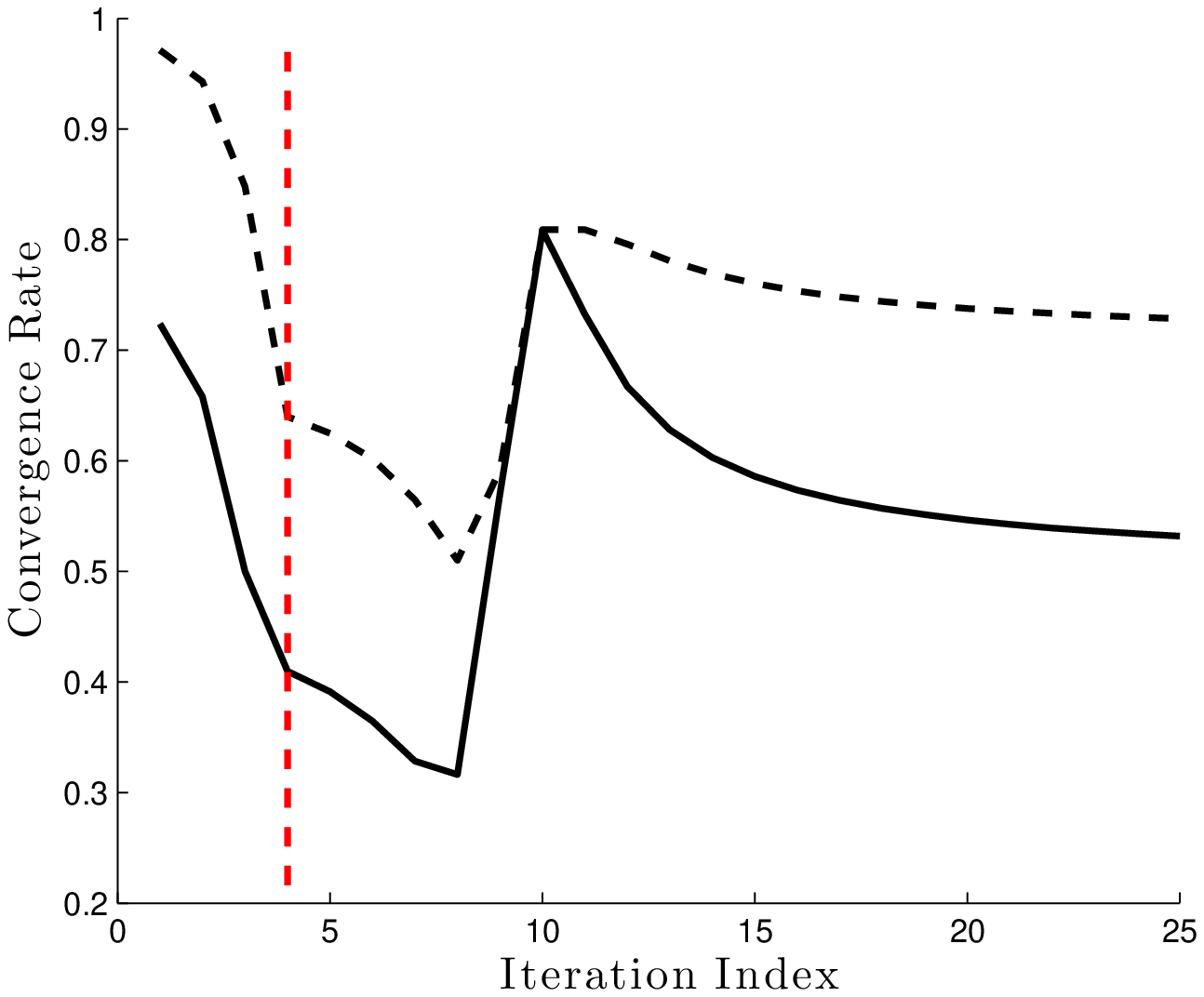}\label{fig:lam1}}
\subfigure[$\bflambda^0 = (30,30)$]{
\includegraphics[scale=0.26]{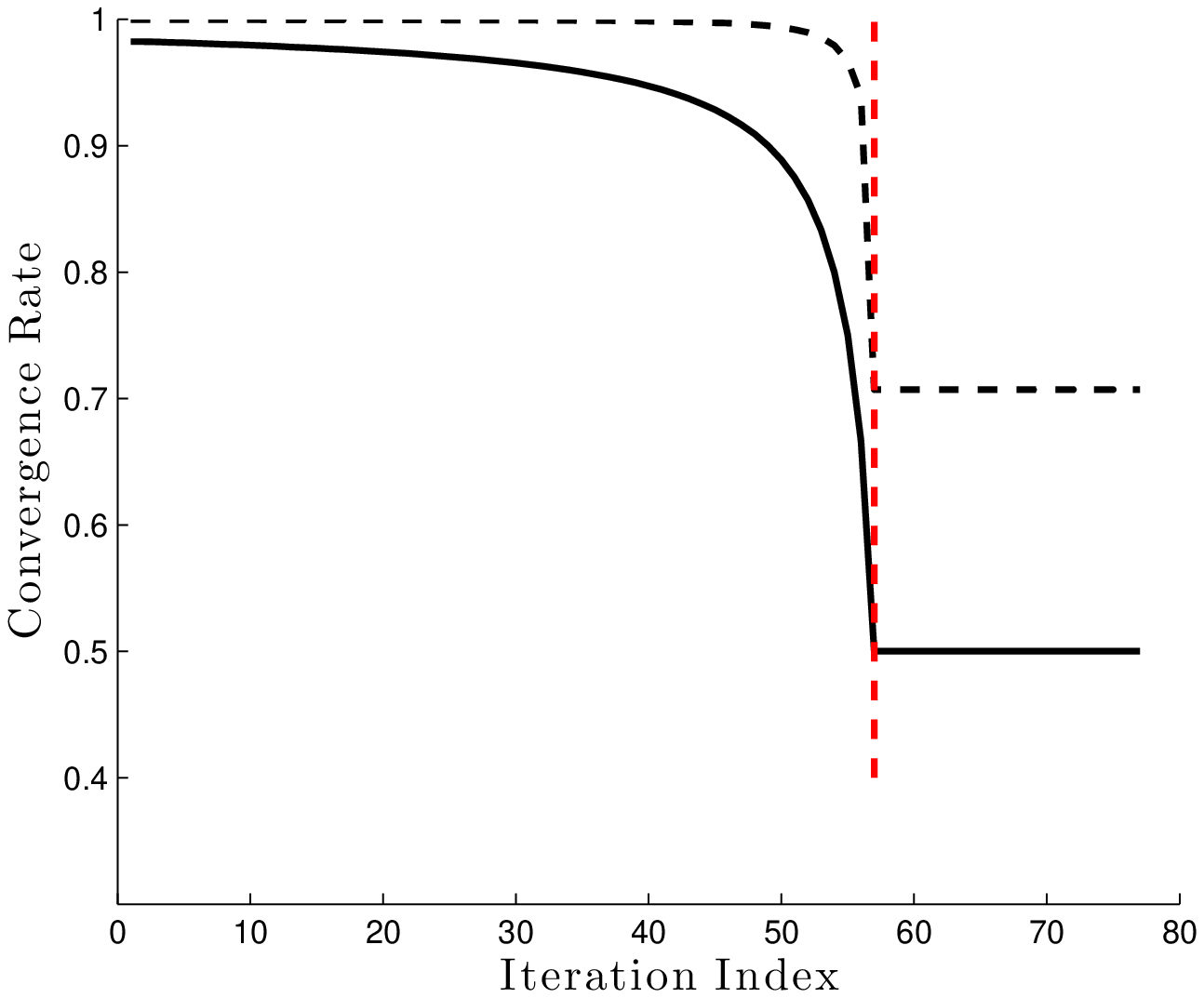}\label{fig:lam10}}
\subfigure[$\bflambda^0 = (300,300)$]{
\includegraphics[scale=0.26]{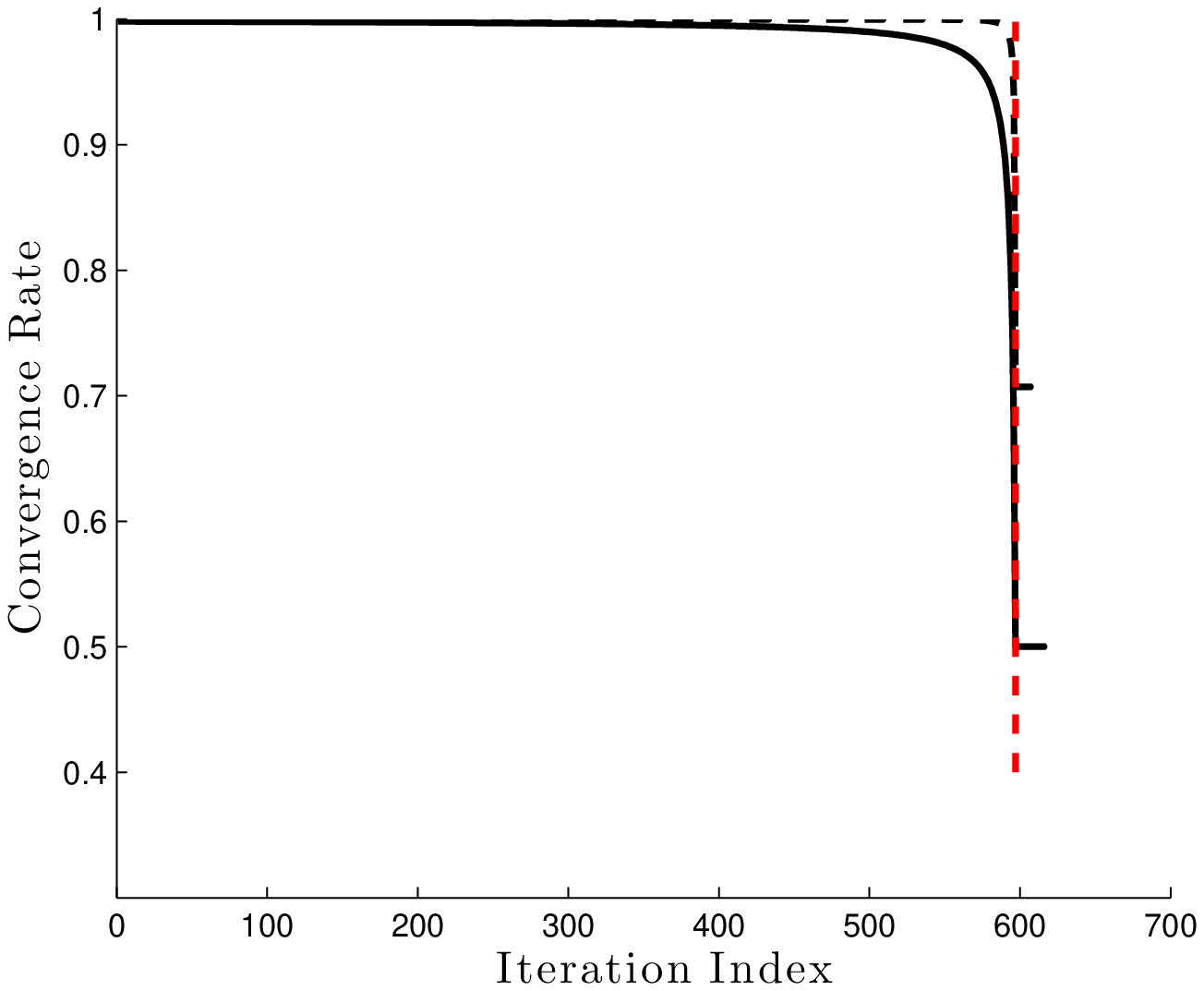}\label{fig:lam100}}
\label{fig:feasqp}
\caption{Plots of convergence rate vs. iteration index for different choices of initial iterate for the multipliers $\bflambda^0$. The solid curve is the actual ratio obtained from the ADMM iterations. The dashed curve is the worst-case bound on the convergence rate derived using the analysis in \S\ref{sec:onestepconvergence}. The vertical line indicates the iteration after which 
$\activeset^k \subseteq \activeset^*$.}
\end{figure}

\subsection{Feasible QP - Effect of problem scaling}\label{sec:scalingFeasqp}
Consider the following scaling of the QP in~\eqref{qpex1},
\beq\bal
\min\limits_{\bfy=(y_1,y_2)} 
&\; \half \bfy^T\begin{bmatrix} \kappa_1^2 & 0 \\ 0 & \kappa_2^2 \end{bmatrix}\bfy  + \begin{bmatrix} 0 & -3\kappa_2 \end{bmatrix}\bfy \\
\text{s.t.} &\; \begin{bmatrix} \kappa_1 & \kappa_2 \end{bmatrix}\bfy = 1,\; \bfy \in [0,\infty)
\eal\label{qpex2}\eeq
where $\kappa_1,\kappa_2 > 0$. 
It is easily seen that the optimal solution to the QP in~\eqref{qpex2} is 
$\bfy^* = (0,\frac{1}{\kappa_2})$, while the optimal multiplier for the non-negativity bound 
constraints is $\bflambda^* = ({2}{\kappa_1},0)$.  The choice of ADMM parameter $\beta$ and 
the convergence criterion are chosen as described in \S\ref{sec:tightFeasqp}. 
Figure~\ref{fig:scalingqp} plots the convergence rates (observed and worst-case bounds) for different values of the scaling parameters $\kappa_1,\kappa_2$.  The initial iterates are set as, 
$\bfw^0  = (0,0)$ and $\bflambda^0 = (3,3)$.  Figure~\ref{fig:scaleqp2}  
plots the convergence rates when $\kappa_1$ is increased while keeping $\kappa_2$ constant.  
As $\kappa_1$ is increased, $c_F^*$ increases from $0.707$ for $\kappa_1 = \kappa_2 = 1$ to 
$0.995$ for $\kappa_1 = 10$.  In other words, increasing $\kappa_1$ results in loss of LICQ 
(Assumption~\ref{ass:licq}) at the solution and this in turn increases $\delta(\|\bfM_{\bfZ}\|,c_F^*,
\alpha^k)$.  Figure~\ref{fig:scaleqp2} shows  that observed convergence rates are quite 
close to $1$ resulting in increased number of iterations for 
convergence once $\activeset^*$ has been identified.  On the other hand increasing $\kappa_2$ (refer Figure~\ref{fig:scaleqp3}) results 
in $c_F^*= 0.0995$. In others words, $\bfE^*$ is close to being in the null space of the constraints.  This results in smaller  $\delta(\|\bfM_{\bfZ}\|,c_F^*,\alpha^k)$ and faster convergence. Figure~\ref{fig:scaleqp3} shows that the worst-case bound 
estimate is tight once $\activeset^*$ is identified.  Increasing $\kappa_2$ further to $100$ has the effect  of further decreasing  
$\delta(\|\bfM_{\bfZ}\|,c_F^*,\alpha^k)$.  However, it has the undesirable consequence of reducing 
$\Delta\bfy^*_{i^{\min}}$ to $0.02$ as opposed to $2$ for $\kappa_2 = 1$. Thus,  
$\alpha^{\max}(\|\bfv^k-\bfv^*\|)$ is larger and the worst-case bound on convergence rate is 
now larger for iterations where $\activeset^k \nsubseteq \activeset^*$.  Consequently, more  
iterations are required to identify $\activeset^*$: $~30$ iterations for $\kappa_2 = 10$ 
(refer Figure~\ref{fig:scaleqp3}), $~300$ iterations for $\kappa_2 = 100$ 
(refer Figure~\ref{fig:scaleqp4}) as opposed to $~4$ iterations for $\kappa_2 = 1$ 
(refer Figure~\ref{fig:lam1}).  However, fewer iterations are required for convergence 
once $\activeset^*$ has been identified - $~20$ iterations for $\kappa_2 = 10$ (refer Figure~\ref{fig:scaleqp3}) and $~10$ iterations for $\kappa_2 = 100$ (refer Figure~\ref{fig:scaleqp4}).

\begin{figure}[ht]
\centering
\subfigure[$\kappa_1 = 10,\kappa_2 = 1$]{
\includegraphics[scale=0.26]{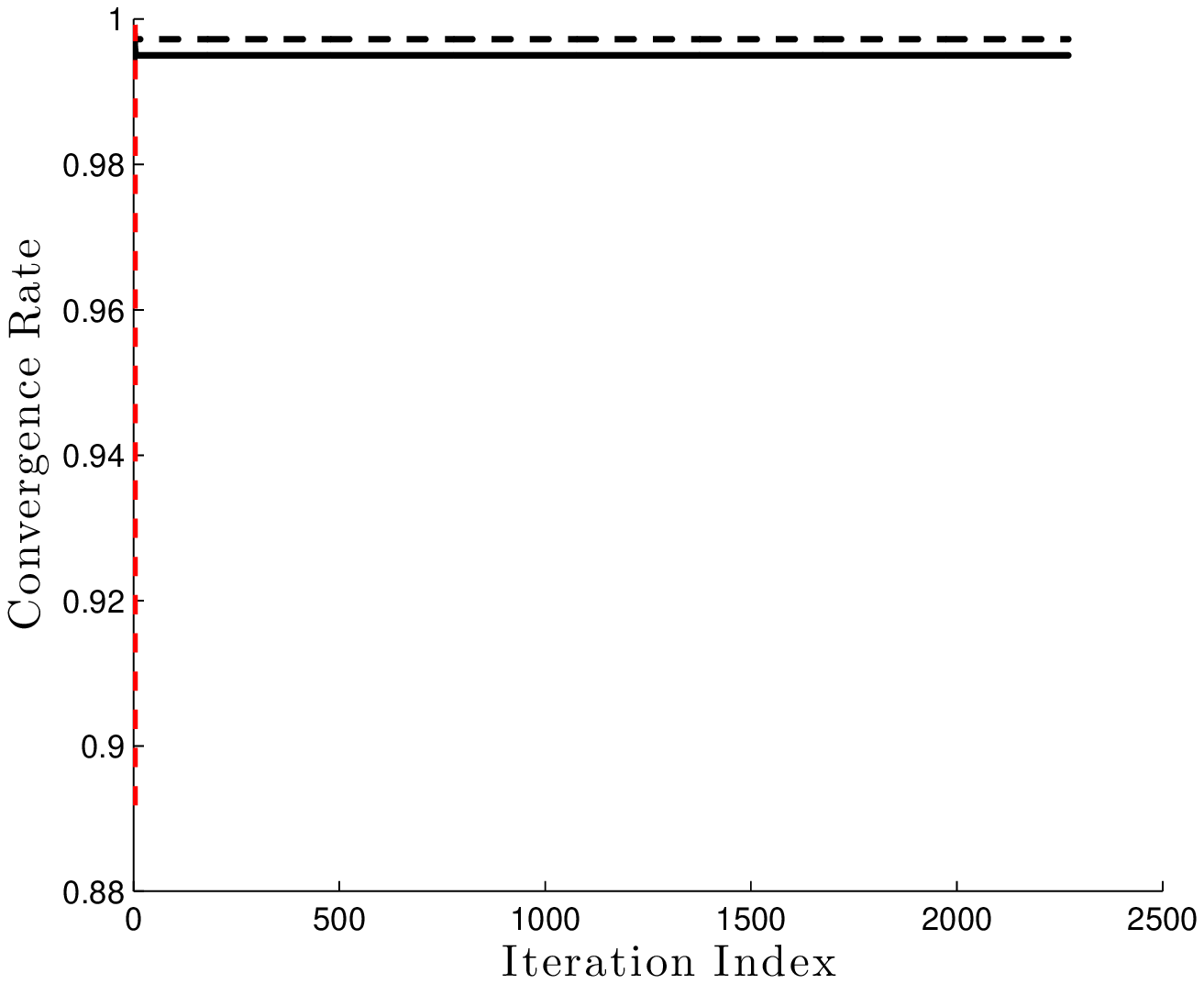}\label{fig:scaleqp2}}
\subfigure[$\kappa_1 = 1,\kappa_2 = 10$]{
\includegraphics[scale=0.26]{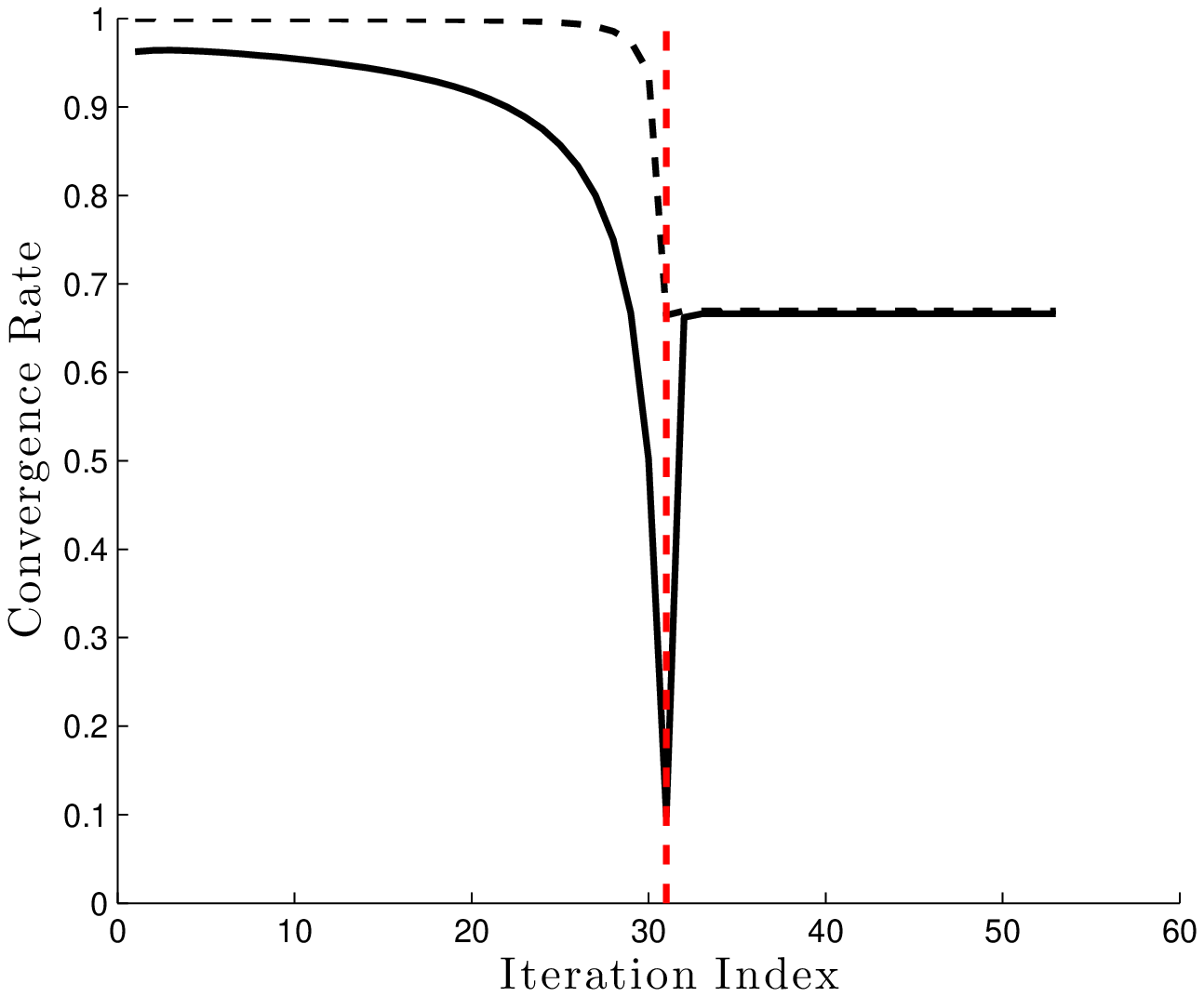}\label{fig:scaleqp3}}
\subfigure[$\kappa_1 = 1,\kappa_2 = 100$]{
\includegraphics[scale=0.26]{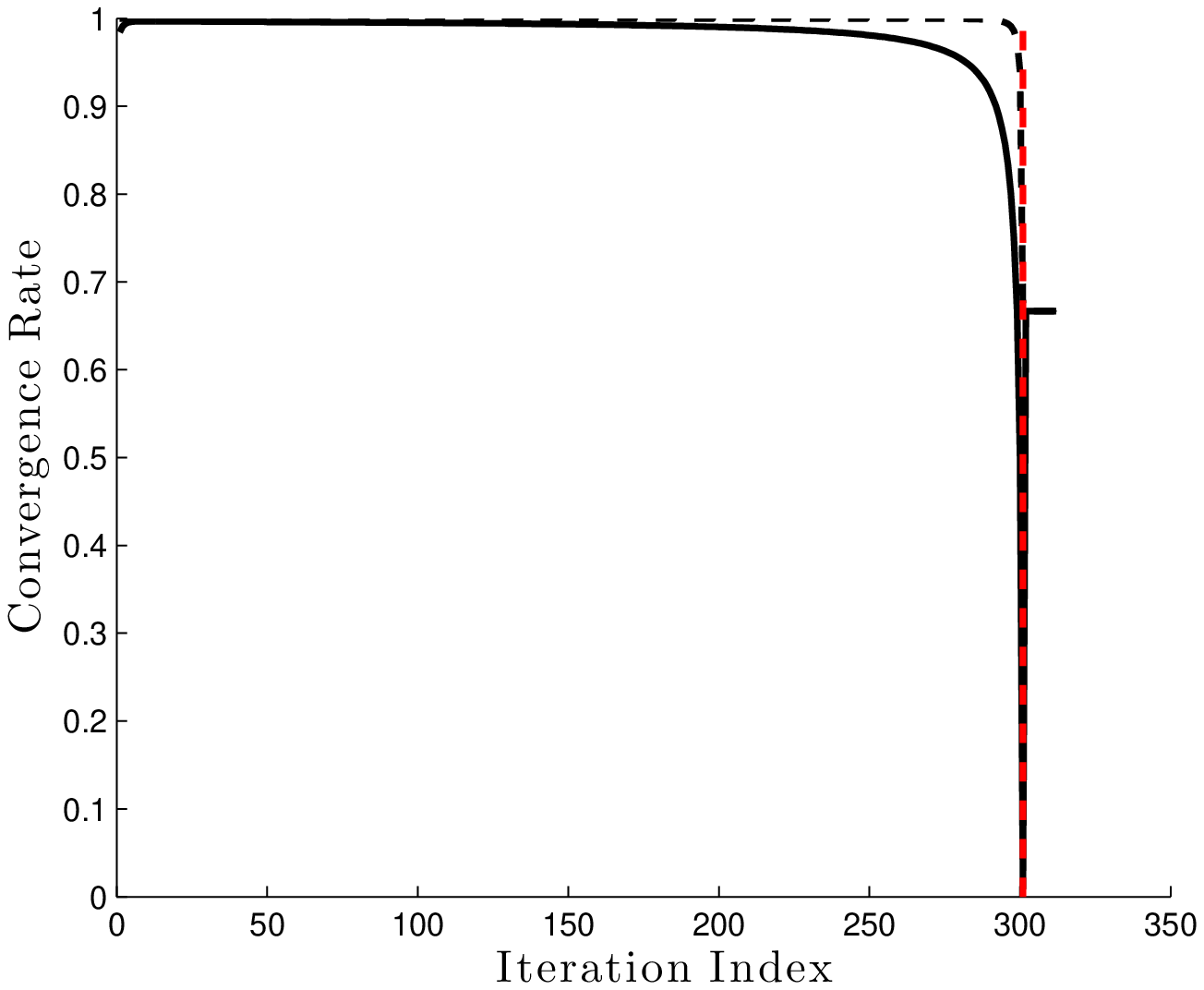}\label{fig:scaleqp4}}
\label{fig:scalingqp}
\caption{Plots of convergence rate vs. iteration index for different choices of scaling parameters $\kappa_1,\kappa_2$. The solid curve is the actual ratio obtained from the ADMM iterations. The dashed curve is the worst-case bound on the convergence rate derived using the analysis in \S\ref{sec:onestepconvergence}. The vertical line indicates the iteration after which 
$\activeset^k \subseteq \activeset^*$.}
\end{figure}

\subsection{Feasible QP - Optimal parameter choice $\beta^*$}\label{sec:optbetaFeasqp}
In this section, we consider the impact of ADMM parameter on the number of iterations for 
attaining convergence.  Figure~\ref{fig:optbetaqp} plots the iterations for convergence and 
iterations to identifying $\activeset^*$ against different values of ADMM parameter $\beta$.  
The choice of ADMM parameter $\beta$ and 
the convergence criterion are chosen as described in \S\ref{sec:tightFeasqp}. 
The initial iterates are set as, $\bfw^0  = (0,0)$ and $\bflambda^0 = (3,3)$ for all choices of the 
ADMM parameter. The optimal ADMM 
parameter for the cases depicted in Figure~\ref{fig:optbetaqp} are: (a) $\beta^* = 1$, 
(b) $\beta^* = 1.98$ and (c) $\beta^* = 1.98$.  The optimal choice coincides with the 
smallest number of iterations for both cases in Figures~\ref{fig:optbetaSc1} 
and~\ref{fig:optbetaSc3} for which the Assumptions of the paper are satisfied.  
However in the case of Figure~\ref{fig:optbetaSc2} where $c_F^*$ approaches $1$ 
which implies failure of LICQ, the proposed $\beta^*$ results in about an order of 
magnitude more iterations than the $\beta$ for which the 
ADMM algorithm converges in fewest number of iterations occur.  In general, it seems 
that more iterations are required for the identification of $\activeset^*$ as $\beta$ increases.  

\begin{figure}[ht]
\centering
\subfigure[$\kappa_1 = 1,\kappa_2 = 1$]{
\includegraphics[scale=0.26]{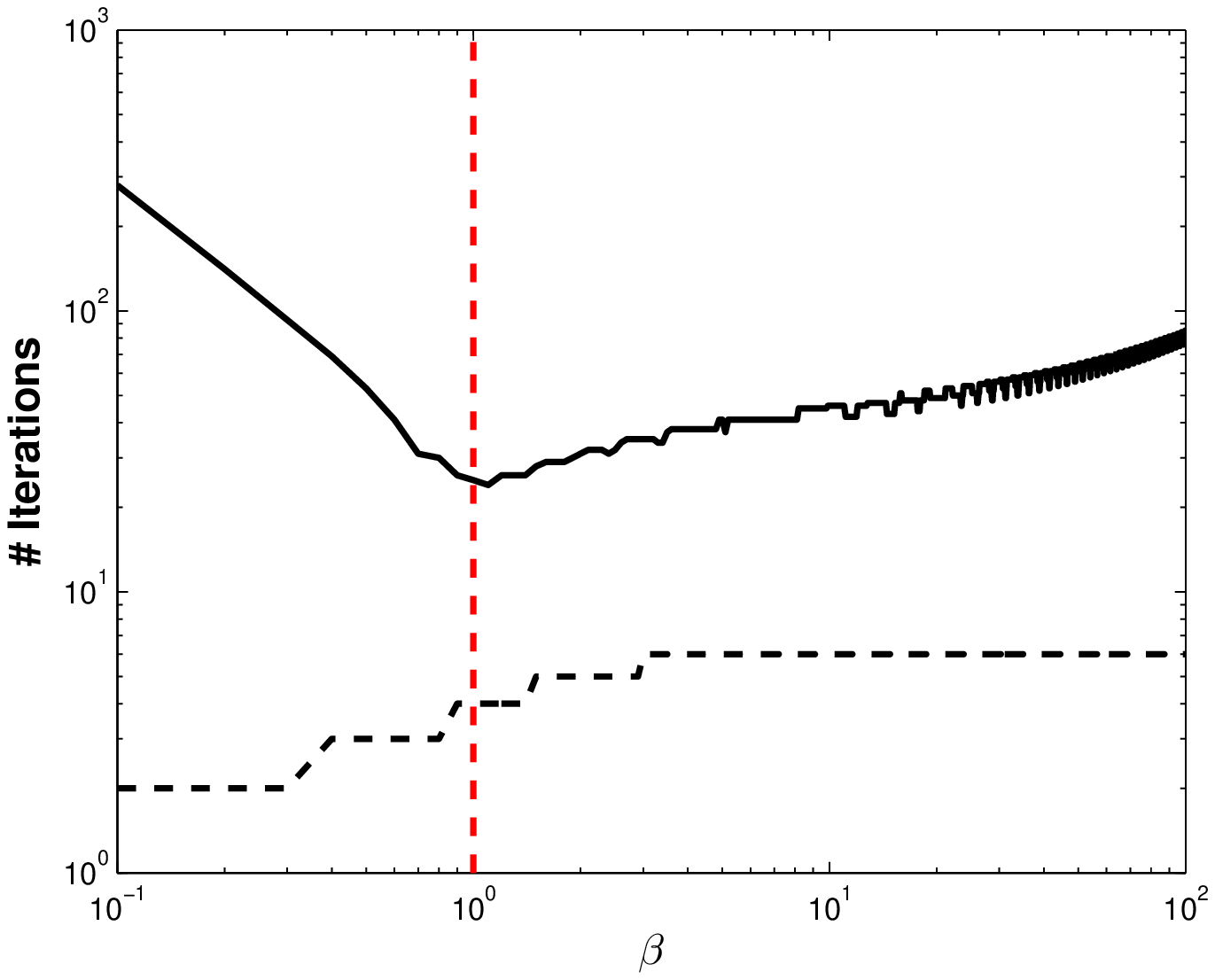}\label{fig:optbetaSc1}}
\subfigure[$\kappa_1 = 10,\kappa_2 = 1$]{
\includegraphics[scale=0.26]{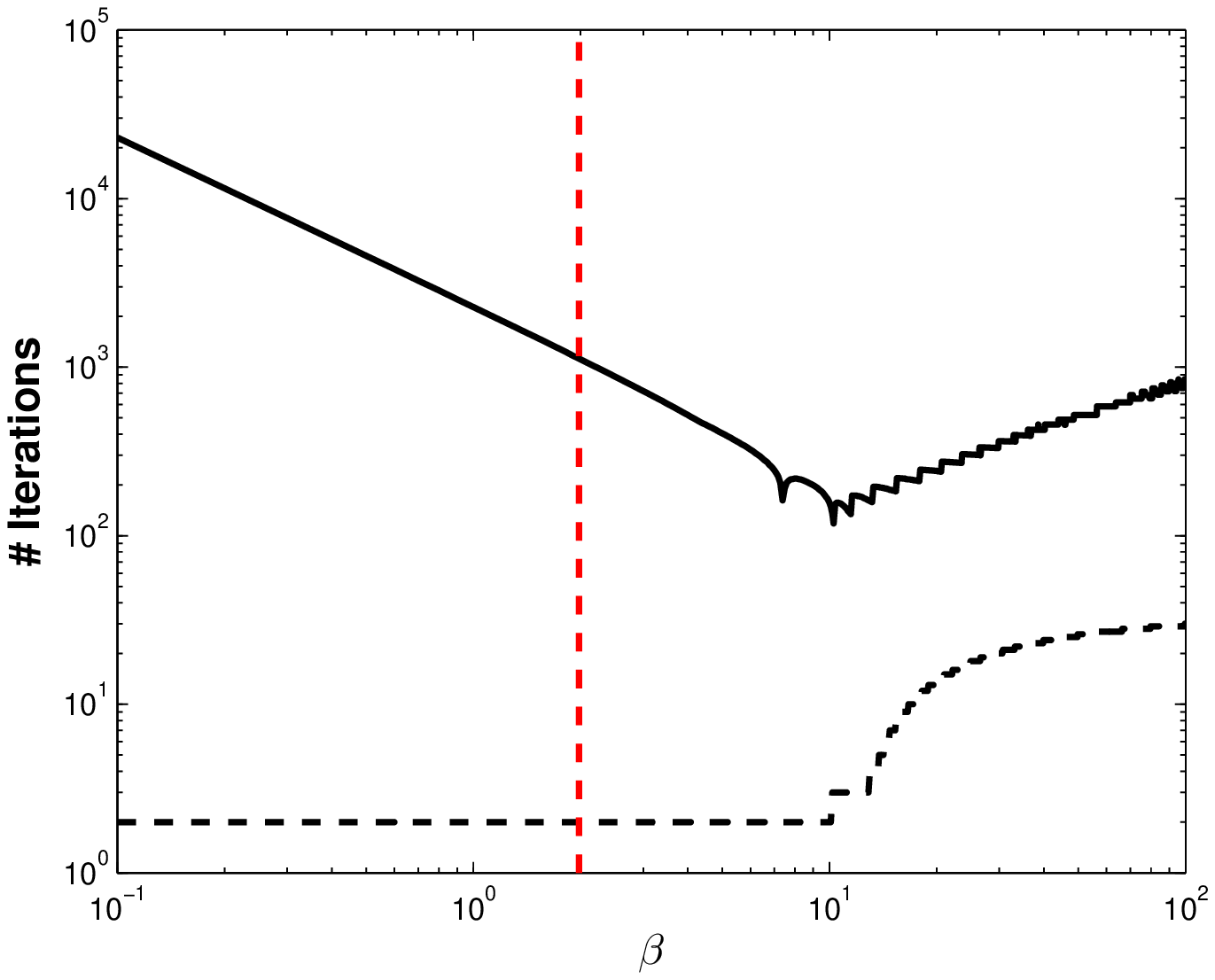}\label{fig:optbetaSc2}}
\subfigure[$\kappa_1 = 1,\kappa_2 = 10$]{
\includegraphics[scale=0.26]{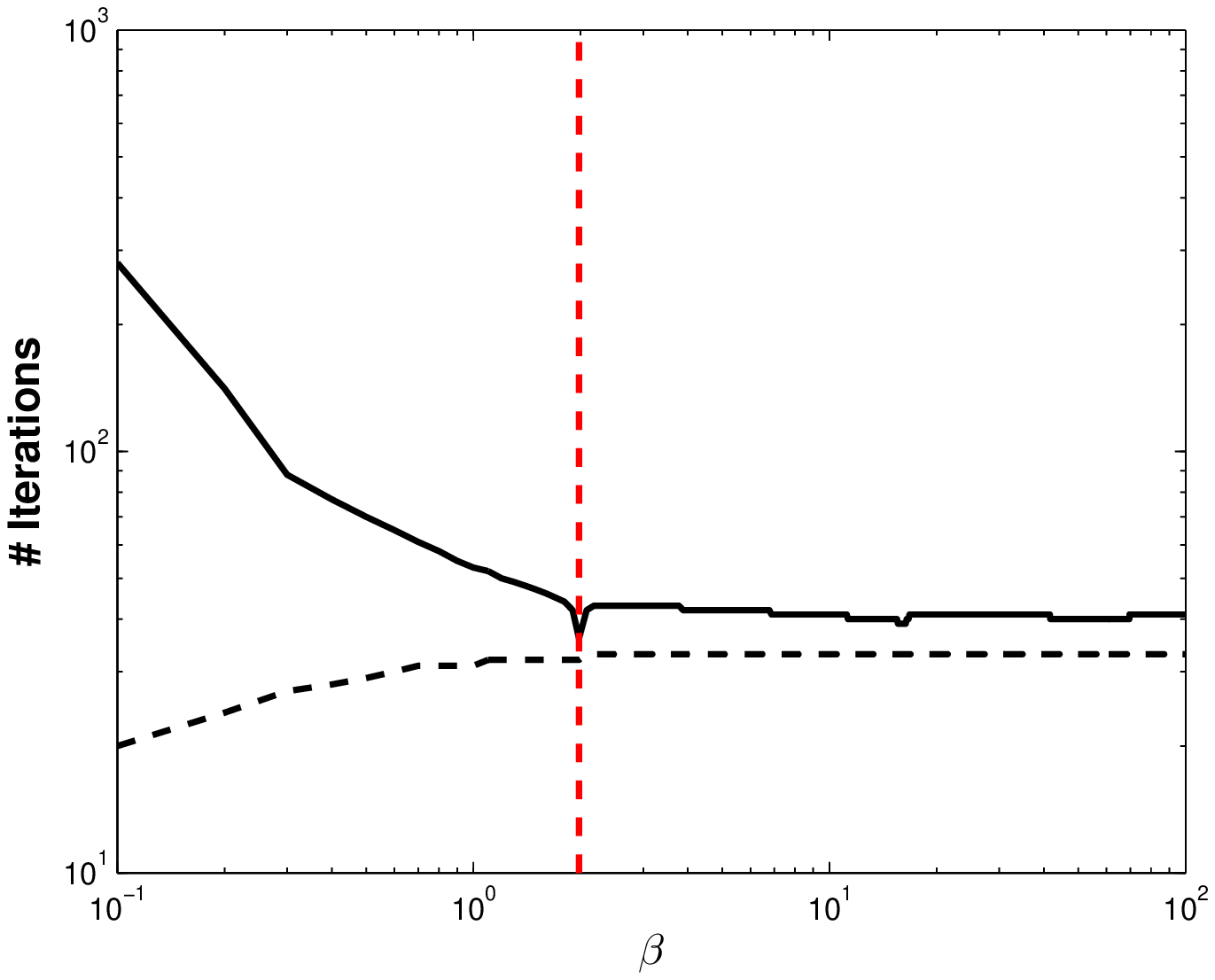}\label{fig:optbetaSc3}}
\label{fig:optbetaqp}
\caption{Plots of number of iterations to convergence vs. ADMM parameter value $\beta$ 
for different values of scaling parameters $\kappa_1,\kappa_2$. The solid curve is the iteration for 
convergence of the ADMM iterations. The dashed curve plots the iteration after which  
$\activeset^k \subseteq \activeset^*$.  The vertical line is the optimal ADMM parameter $\beta^* = \sqrt{\lambda_{\min}(\bfZ^T\bfQ\bfZ)\bflambda_{\max}(\bfZ^T\bfQ\bfZ)}$.}
\end{figure}

\subsection{Feasible QP - Non-strict complementarity}\label{sec:nonstrictqp}
In this section, we show that our results continue to hold on QP's where strict 
complementarity does not hold at the solution.  Consider modifying the 
QP~\eqref{qpex2} as $\bfq = [-2 \; -3]^T$.  The solution to this problem is 
$\bfy^* = (0,\frac{1}{\kappa_2})$ with multipliers $\bflambda^* = (0,0)$.  
Thus, the solution does not satisfy strict complementarity. 
The initial iterates are set as, $\bfw^0  = (0,0)$ and $\bflambda^0 = (3,3)$ for all choices of the 
ADMM parameter. The convergence criterion are chosen as described in 
\S\ref{sec:tightFeasqp}.   Figure~\ref{fig:noSc1} plots the convergence rate bound against 
observed convergence for different scaling parameters.  It is clearly seen that our analysis 
yields a strict contraction even when strict complementarity does not hold.  Further, 
Figure~\ref{fig:noSc2} also shows that estimate is tight for the case of non-strict 
complementarity.  Figure~\ref{fig:noSc3} shows that the characterization of optimal ADMM 
parameter also holds in the case of non-strict complementarity.

\begin{figure}[ht]
\centering
\subfigure[$\kappa_1 = 1,\kappa_2 = 1$]{
\includegraphics[scale=0.26]{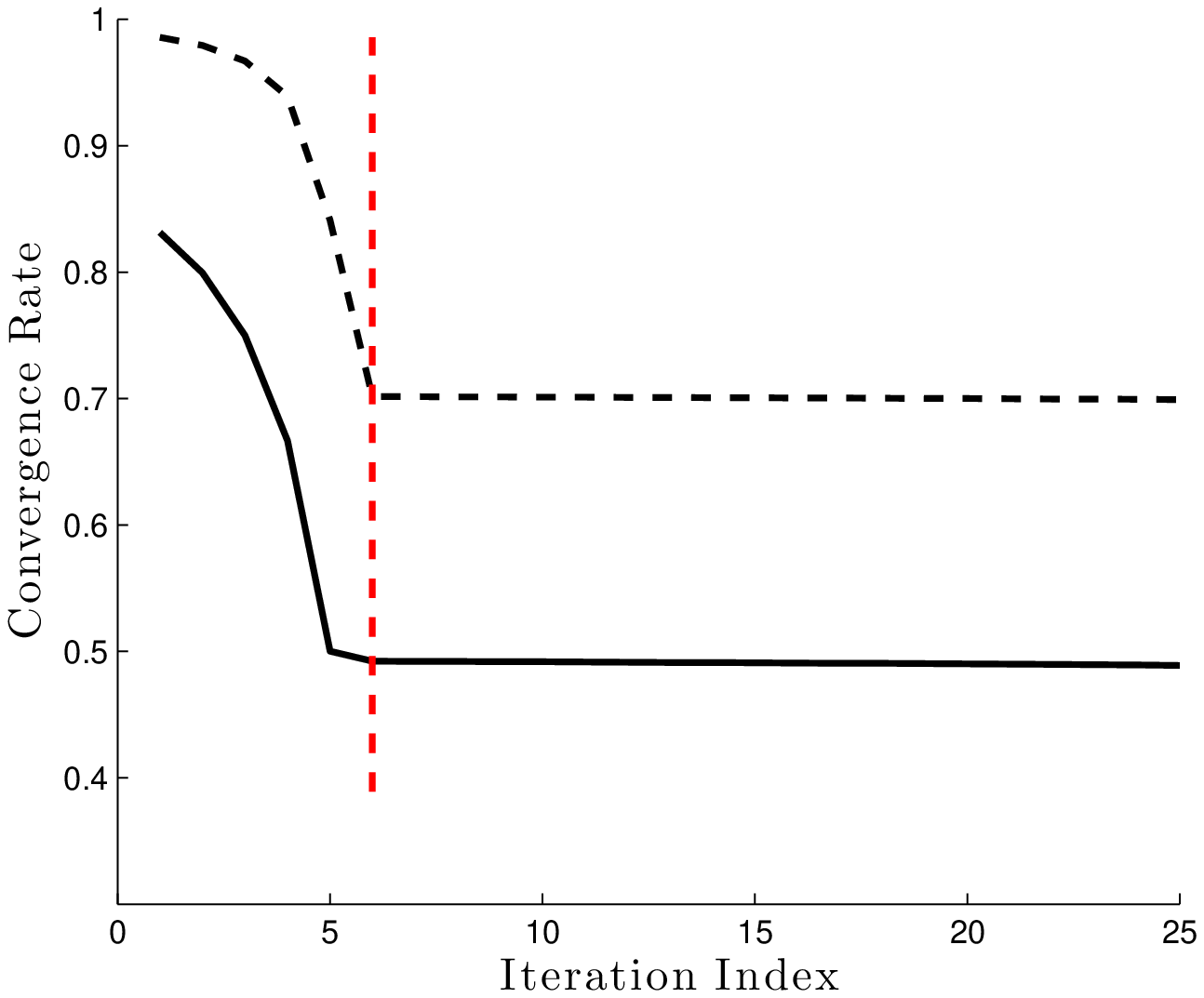}\label{fig:noSc1}}
\subfigure[$\kappa_1 = 10,\kappa_2 = 1$]{
\includegraphics[scale=0.26]{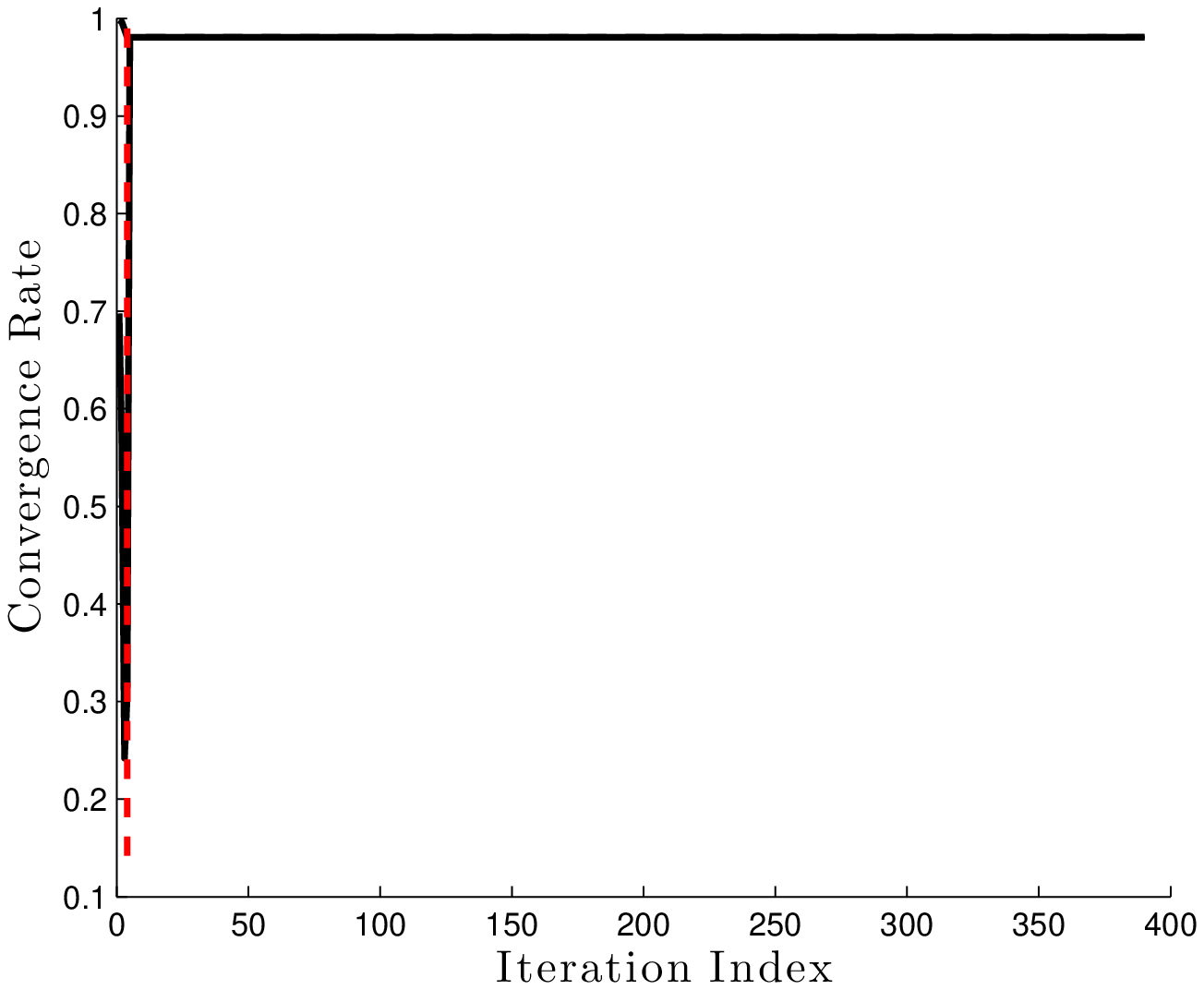}\label{fig:noSc2}}
\subfigure[$\kappa_1 = 1,\kappa_2 = 1$]{
\includegraphics[scale=0.26]{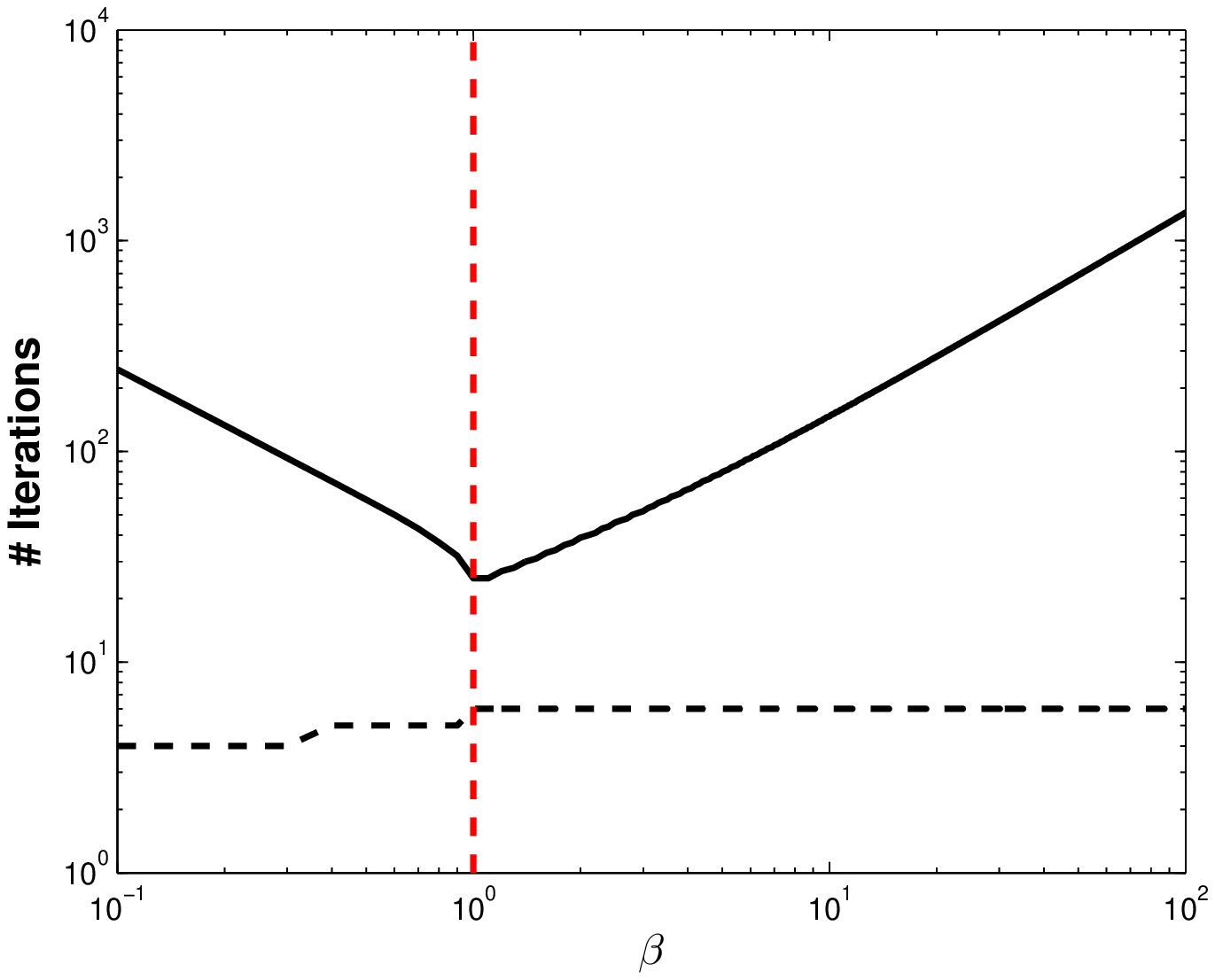}\label{fig:noSc3}}
\label{fig:qpnosc}
\caption{Figures~\ref{fig:noSc1} and~\ref{fig:noSc2} plots the convergence rate against 
iteration index. The solid curve is the actual ratio obtained from the ADMM iterations. The dashed curve is the worst-case bound on the convergence rate derived using the analysis in \S\ref{sec:onestepconvergence}. The vertical line indicates the iteration after which  
$\activeset^k \subseteq \activeset^*$.
Figure~\ref{fig:noSc3} plots the variation in iterations to reach convergence  against ADMM 
parameter $\beta$. The solid curve is the iteration for 
convergence of the ADMM iterations. The dashed curve plots the iterations after which 
$\activeset^k  \subseteq \activeset^*$.  The vertical line is the optimal ADMM parameter $\beta^* = \sqrt{\lambda_{\min}(\bfZ^T\bfQ\bfZ)\bflambda_{\max}(\bfZ^T\bfQ\bfZ)}$.}
\end{figure}

\subsection{Infeasible QP}\label{sec:Infeasqp}
Consider, the following infeasible QP
\beq\bal
\min\limits_{\bfy=(y_1,y_2)} 
&\; \half \bfy^T\bfy  + \begin{bmatrix} 0 & -3 \end{bmatrix}\bfy \\
\text{s.t.} &\; \begin{bmatrix} 1 & -1 \end{bmatrix}\bfy = -1,\; \bfy \in [-2,2]\times [5,10].
\eal\label{qpex3}\eeq
The QP above has an unique minimizer of infeasibility $\bfy^{\circ} = (3,4)$, $\bfw^{\circ} = (2,5)$ 
and $\bflambda^{\circ} = (-1,1)$ which implies that $\bfy^{\bfQ} = \bflambda^{\bfQ} = 0$. Figure~\ref{fig:betainfeasqp} plots various iteration related quantities over $100$ iterations of ADMM algorithm.  In both cases the initial iterates are chosen as, $\bfw^0 = \bflambda^0 = (0,0)$.  
of ADMM iteration Figure~\ref{fig:betainfeasqp} shows that $\{\bfy^k\} \rightarrow \bfy^\circ$ and $\{\bfw^k\} \rightarrow \bfw^{\circ}$ for both values of $\beta$ as shown in 
Theorem~\ref{thm:fixpointsinfeas}.  Further, convergence of 
$\{\bfv^k-\bfv^{k-1}\} \rightarrow -\bflambda^{\circ}$ is also verified as shown in Lemma~\ref{lemm:limitofvk}.  Finally, it also verified that the quantity~\eqref{infeasangleconds} holds in the fourth panel of Figures~\ref{fig:infeasqpBeta1} and~\ref{fig:infeasqpBeta10}.  Further, the larger value of $\beta$ results in faster convergence of different quantities towards the limiting values.  
It can also be verified that for the constraints in~\eqref{qpex3} the limiting values are independent 
of the choice of objective function. 

Consider modifying the equality constraint in~\eqref{qpex3} to $y_2 = 1$.   In this case, 
$\bfy^{\circ} = (a,1)$, $\bfw^{\circ} = (a,5)$ for any $a \in [-2,2]$ is a candidate for minimizer 
of Euclidean distance between the hyperplane and bound constraints.  Denote by 
$\bfy^{\circ} = (0,1)$ and $\bfw^{\circ} = (0,5)$.  For this definition of $\bfy^{\circ},\bfw^{\circ}$, 
as the linear term in the objective is varied - $[q_1 \; -3]^T$ it can be shown that
\[
\bfy^{\bfQ} = \left\{ \bal
(2,0) &\;\forall\; q_1 \leq -2 \\
(q_1,0) &\;\forall\; q_1 \in (-2,2) \\
(-2,0) &\;\forall\; q_1 \geq 2
\eal\right., \text{ and }
\bflambda^{\bfQ} = \left\{ \bal
(q_1+2,0) &\;\forall\; q_1 \leq -2 \\
(0,0) &\;\forall\; q_1 \in (-2,2) \\
(q_1-2,0) &\;\forall\; q_1 \geq 2.
\eal\right.
\]
The convergence of ADMM iterates to the limit defined in Theorem~\ref{thm:fixpointsinfeas} 
can be verified for different values of $q_1$.

\begin{figure}[ht]
\centering
\subfigure[$\beta = 1$]{
\includegraphics[width=0.48\textwidth,height=6cm]{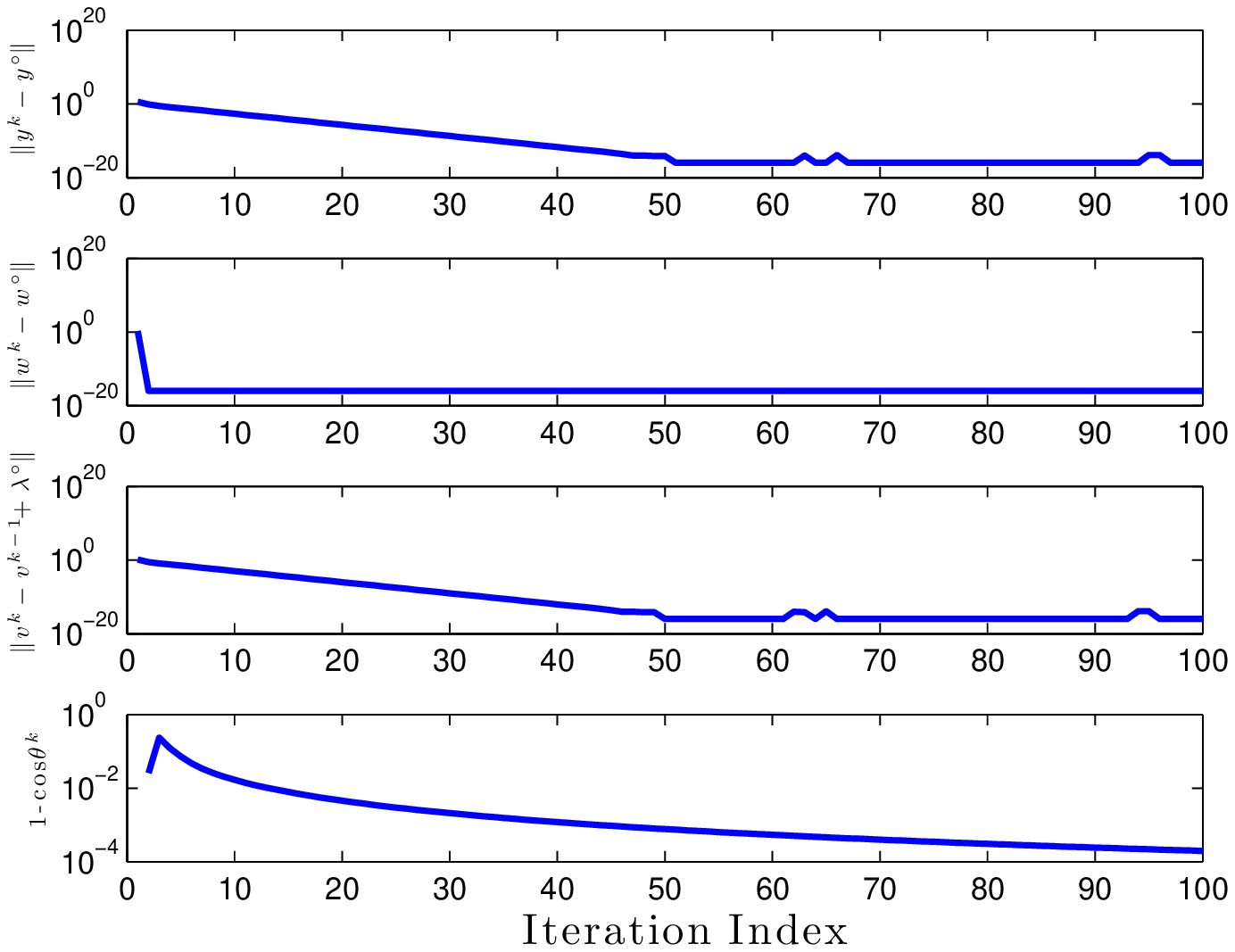}\label{fig:infeasqpBeta1}}
\subfigure[$\beta = 10$]{
\includegraphics[width=0.48\textwidth,height=6cm]{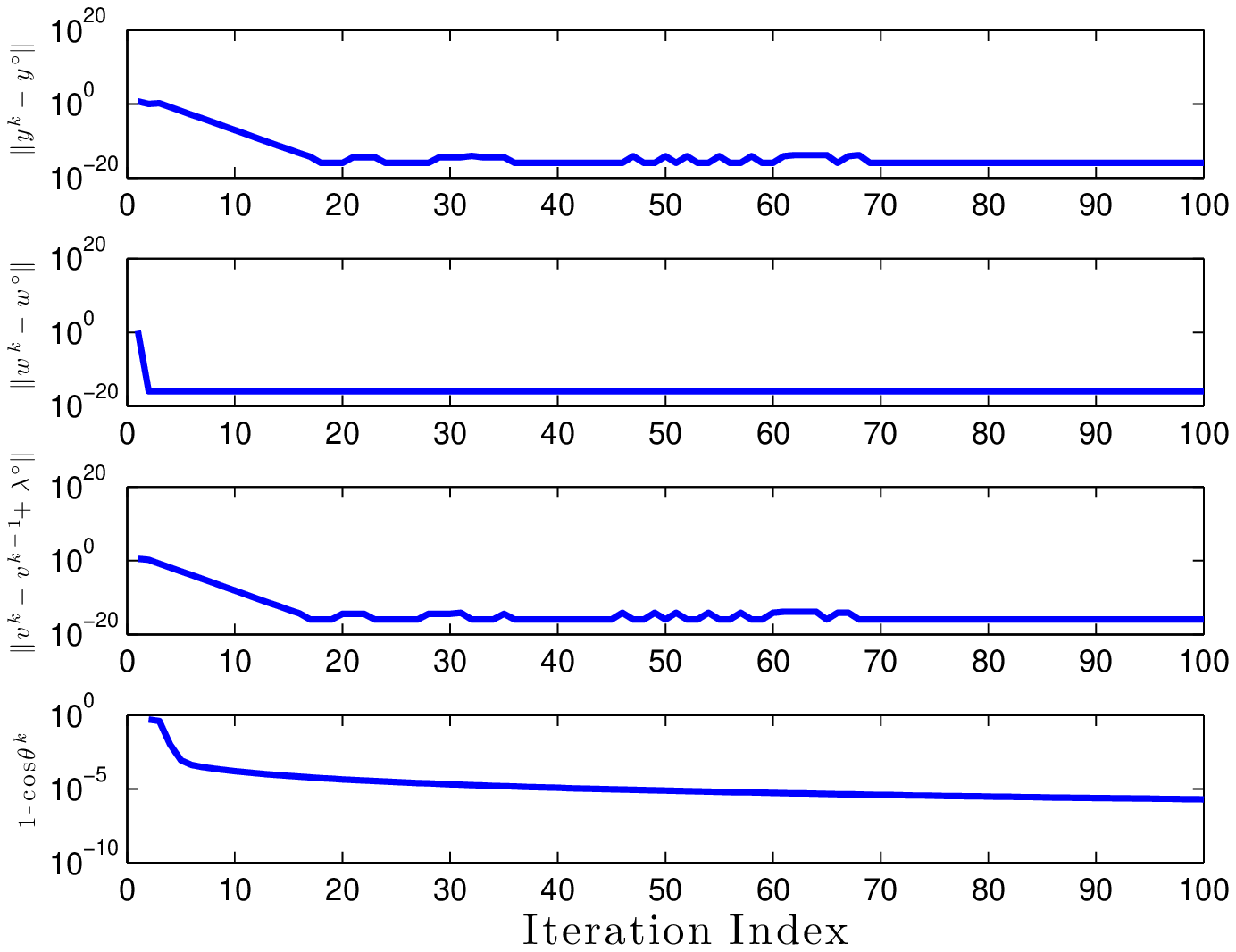}\label{fig:infeasqpBeta10}}
\label{fig:betainfeasqp}
\caption{Plots of different ADMM iteration related quantities vs. iteration index for different values of ADMM parameter $\beta$. In the above, cos($\theta^k$) = $(\bflambda^k)^T(\bfw^k-\bfy^k)/(\|\bflambda^k\|\cdot\|\bfw^k-\bfy^k\|)$.}
\end{figure}

\section{Conclusions}\label{sec:conclusions}
The paper analyzes convergence behavior of the ADMM iterations for convex QPs.  The algorithm is shown to be Q-linearly convergent under positive definiteness of reduced Hessian and linear independence constraint qualifications for feasible QPs.  For infeasible QPs, we analyze the limit of the sequence of iterates generated by ADMM 
and provide conditions for detecting infeasibility.

\section*{Acknowledgement}
The authors like to thank Pontus Giselsson for pointing out the  incomplete proofs of a conference submission which led to the present approach to showing the current results, and  also Andrew Knyazev for discussions on angle between subspaces.  The authors are also thankful to  the 
anonymous referees whose careful reading and detailed comments helped to improve the presentation.

\bibliographystyle{plain}

\begin{thebibliography}{10}

\bibitem{BarBie06}
{\sc R.~A. Bartlett and L.~T. Biegler}, {\em {QPSchur: a dual, active-set,
  Schur-complement method for large-scale and structured convex quadratic
  programming}}, {Optimization and Engineering}, 7 (2006), pp.~5--32.

\bibitem{BauCombook}
{\sc H.~H. Bauschke and P.~L. Combettes}, {\em {Convex Analysis and Monotone
  Operator Theory in Hilbert Spaces}}, Springer, 2011.

\bibitem{BauBelNgh14}
{\sc H.~H. Bauschke, J.~Y.~B. Cruz, T.~T.~A. Nghia, H.~M. Phan, and X.~Wang},
  {\em {The rate of linear convergence of the Douglas-Rachford algorithm for
  subspaces is the cosine of the Friedrichs angle}}, {Journal of Approximation
  Theory}, 185 (2014), pp.~63--79.

\bibitem{Bol14}
{\sc D.~Boley}, {\em {Local linear convergence of the alternating direction
  method of multipliers on quadratic or linear programs}}, {SIAM J. on
  Optimization}, 23 (2014), pp.~2183--2207.

\bibitem{Boyadmm}
{\sc S.~Boyd, N.~Parikh, E.~Chu, and J.~Eckstein}, {\em {Distributed
  optimization and statistical learning via the alternating direction method of
  multipliers}}, {Foundations and Trends in Machine Learning}, 3 (2011),
  pp.~1--122.

\bibitem{BoydVandenbergheBook}
{\sc S.~Boyd and L.~Vandenberghe}, {\em {Convex Optimization}}, {Cambridge
  University Press}, {New York, NY, USA}, 2004.

\bibitem{ChiJch14}
{\sc A.~Chiche and J.~C. Gilbert}, {\em How the augmented lagrangian algorithm
  can deal with an infeasible convex quadratic optimization problem}, Tech.
  Rep. INRIA Research Report, RR-8583, INRIA, August 2014.

\bibitem{GolMaSch10}
{\sc {D.~Goldfarb and S.~Ma and K.~Scheinberg}}, {\em Fast alternating
  linearization methods for minimizing the sum of two convex functions},
  {Mathematical Programming},  (2010), pp.~1--34.

\bibitem{DelJch05}
{\sc F.~Delbos and J.~C. Gilbert}, {\em Global linear convergence of an
  augmented lagrangian algorithm for solving convex quadratic optimization
  problems}, Journal of Convex Analysis, 12 (2005), pp.~45--69.

\bibitem{DenYin12}
{\sc W.~Deng and W.~Yin}, {\em On the global and linear convergence of the
  generalized alternating direction method of multipliers}, Tech. Rep.
  {TR12-14}, {Rice University, CAAM Technical Report}, 2012.

\bibitem{Deu01}
{\sc F.~Deutsch}, {\em {Best Approximation in Inner Product Spaces}}, vol.~7,
  Springer, 2001.

\bibitem{DouRac56}
{\sc J.~Douglas and H.~H. Rachford}, {\em On the numerical solution of the heat
  conduction problem in 2 and 3 space variables}, Tran. Amer. Math. Soc., 82
  (1956), pp.~421--439.

\bibitem{EckBer90}
{\sc J.~Eckstein and D.~P. Bertsekas}, {\em An alternating direction method for
  linear programming}, tech. rep., {Massachusetts Institute of Technology.
  Laboratory for Information, and Decision Systems}, 1990.

\bibitem{EckBer92}
{\sc J.~Eckstein and D.~P. Bertsekas}, {\em On the {Douglas-Rachford} splitting
  method and the proximal point algorithm for maximal monotone operators},
  {Mathematical Programming: Series A}, 55 (1992), pp.~293--318.

\bibitem{FlegenQP}
{\sc R.~Fletcher}, {\em A general quadratic programming algorithm}, {J. Inst.
  Math. Applics.}, 7 (1971), pp.~76--91.

\bibitem{ForCanGia10}
{\sc P.~A. Forero, A.~Cano, and G.~B. Giannakis}, {\em {Consensus-based
  distributed support vector machines}}, {J. Mach. learn. Res.}, 99 (2010),
  pp.~1663--1701.

\bibitem{GabMer76}
{\sc D.~Gabay and B.~Mercier}, {\em A dual algorithm for the solution of
  nonlinear variational problems in finite-element approximations}, {Computers
  and Mathematics with Applications}, 2 (1976), pp.~17--40.

\bibitem{GhaTeiSha13}
{\sc E.~Ghadimi, A.~Teixeira, I.~Shames, and M.~Johansson}, {\em {Optimal
  parameter selection for the alternating direction method of multipliers
  (ADMM): quadratic problems}}, Tech. Rep. {arXiv:1306.2454v1}, 2013.

\bibitem{GilGouMur84}
{\sc P.~E. Gill, N.~I.~M. Gould, W.~Murray, M.~A. Saunders, and M.~H. Wright},
  {\em A weighted gram-schmidt method for convex quadratic programming},
  {Mathematical Programming}, 30 (1984), pp.~176--195.

\bibitem{GilMurSau97}
{\sc P.~E. Gill, W.~Murray, and M.~A. Saunders}, {\em UserÔæïs guide for sqopt
  5.3: a fortran package for large- scale linear and quadratic programming},
  Tech. Rep. Numerical Analysis Report 97-4, Department of Mathematics,
  University of California, San Diego, La Jolla, CA, 1997.

\bibitem{GilWon12}
{\sc P.~E. Gill and E.~Wong}, {\em Sequential quadratic programming methods},
  in Mixed Integer Nonlinear Programming, vol.~154 of The IMA Volumes in
  Mathematics and its Applications, Springer, New York, 2012, p.~147‚Äì224.

\bibitem{GilWon13}
\leavevmode\vrule height 2pt depth -1.6pt width 23pt, {\em Methods for convex
  and general quadratic programming}, Tech. Rep. Report CCoM 13-1, Department
  of Mathematics, University of California, San Diego, 2013.

\bibitem{GisBoy14}
{\sc P.~Giselsson and S.~Boyd}, {\em {Metric selection in Douglas-Rachford
  splitting and ADMM}}, Tech. Rep. arXiv:1410.8479, arXiv, 2014.

\bibitem{GolIdn83}
{\sc D.~Goldfarb and A.~Idnani}, {\em A numerically stable dual method for
  solving strictly convex quadratic programs}, Mathematical Programming, 27
  (1983), pp.~1--33.

\bibitem{GolMa12}
{\sc D.~Goldfarb and S.~Ma}, {\em Fast multiple splitting algorithms for convex
  optimization}, {SIAM Journal on Optimization}, 22 (2012), pp.~533--556.

\bibitem{GougenQP}
{\sc N.~I.~M. Gould}, {\em An algorithm for large-scale quadratic programming},
  {IMA J. Numer. Anal.}, 11 (1991), pp.~299--324.

\bibitem{HeYua12}
{\sc B.~He and X.~Yuan}, {\em {On the $O(1/n)$ Convergence Rate of the
  Douglas-Rachford Alternating Direction Method}}, {SIAM Journal on Numerical
  Analysis}, 50 (2012), pp.~700--709.

\bibitem{Hes69}
{\sc M.~R. Hestenes}, {\em Multiplier and gradient methods}, J. Optimization
  Theory Appl., 4 (1969), pp.~303--320.

\bibitem{HonLuo13}
{\sc M.~Hong and Z.-Q. Luo}, {\em {On the Linear Convergence of the Alternating
  Direction Method of Multipliers}}, Tech. Rep. {arXiv:1208.3922}, 2013.

\bibitem{KimKoj03}
{\sc S.~Kim and M.~Kojima}, {\em Exact solutions of some nonconvex quadratic
  optimization problems via sdp and socp relaxations}, Computational
  Optimization and Applications, 26 (2003), pp.~143--154.

\bibitem{LiaFadPey15}
{\sc J.~Liang, J.~Fadili, and G.~P. andR. Luke}, {\em {Activity Identification
  and Local Linear Convergence of Douglas–Rachford/ADMM under Partial
  Smoothness}}, Tech. Rep. arXiv:1412.6858v7, arXiv, 2015.

\bibitem{MalPovRen09}
{\sc J.~Malick, J.~Povh, F.~Rendl, and A.~Wiegele}, {\em Regularization methods
  for semidefinite programming}, {SIAM Journal on Optimization}, 20 (2009),
  p.~336ÔøΩ356.

\bibitem{MorTor91}
{\sc J.~J. Moe\'{e} and G.~Toraldo}, {\em On the solution of large quadratic
  programming problems with bound constraints}, SIAM J. Optim., 1 (1991),
  pp.~93--113.

\bibitem{NocedalWrightBook}
{\sc J.~Nocedal and S.~Wright}, {\em Numerical optimization}, Springer verlag,
  1999.

\bibitem{Pow69}
{\sc M.~J.~D. Powell}, {\em A method for nonlinear constraints in minimization
  problems}, in Optimization, Academic Press, London, 1969, pp.~283--298.

\bibitem{Pow85}
{\sc M.~J.~D. Powell}, {\em On the quadratic programming algorithm of
  {Goldfarb} and {Idnani}}, Mathematical Programming Studies, 25 (1985),
  pp.~46--61.

\bibitem{RagSte13ACC}
{\sc A.~U. Raghunathan and S.~{Di Cairano}}, {\em {Alternating Direction Method
  of Multipliers (ADMM) for Strictly Convex Programs : Optimal Parameter
  Selection}}, in {Proceedings of 2014 American Control Conference}, 2014,
  pp.~4324--4329.

\bibitem{RagSte14CDC}
\leavevmode\vrule height 2pt depth -1.6pt width 23pt, {\em Infeasibility
  detection in alternating direction method of multipliers for convex quadratic
  programs}, in IEEE Conference on Decision and Control, 2014, pp.~5819--5824.

\bibitem{RagSte14MTNS}
\leavevmode\vrule height 2pt depth -1.6pt width 23pt, {\em Optimal step-size
  selection in alternating direction method of multipliers for convex quadratic
  programs and model predictive control}, in International Symposium on
  Mathematical Theory of Networks and Systems (MTNS), 2014, pp.~807--814.

\bibitem{Roc76}
{\sc R.~Rockafellar}, {\em Monotone operators and the proximal point
  algorithm}, SIAM J. Control Opt., 14 (1976), pp.~877--898.

\bibitem{Sedumi}
{\sc J.~Sturm}, {\em Using {SeDuMi} 1.02, a {MATLAB} toolbox for optimization
  over symmetric cones}, Optimization Methods and Software, 11--12 (1999),
  pp.~625--653.

\bibitem{WahBoyAnn12}
{\sc B.~Wahlberg, S.~Boyd, M.~Annergren, and Y.~Wang}, {\em {An ADMM Algorithm
  for a Class of Total Variation Regularized Estimation Problems}}, in 16th
  IFAC Symposium on System Identification, vol.~16, 2012.

\bibitem{WanYanYin08}
{\sc Y.~Wang, J.~J.~Yang, W.~Yin, and Y.~Zhang}, {\em A new alternating
  minimization algorithm for total variation image reconstruction}, SIAM
  Journal on Imaging Sciences, 1 (2008), pp.~248--272.

\bibitem{WeiOzd13}
{\sc E.~Wei and A.~Ozdalgar}, {\em {Distributed Alternating Direction Method of
  Multipliers}}, in {Allerton Conference on Communication, Control and
  Computing}, 2013.

\bibitem{WenGolYin10}
{\sc Z.~Wen, D.~Goldfarb, and W.~Yin}, {\em Alternating direction augmented
  lagrangian methods for semidefinite programming}, {Mathematical Programming
  Computation}, 2 (2010), pp.~203--230.

\bibitem{WrightIPM}
{\sc S.~J. Wright}, {\em Primal-Dual Interior-Point Methods}, ``SIAM'',
  Philadelphia, 1997.

\bibitem{YanZha11}
{\sc J.~Yang and Y.~Zhang}, {\em {Alternating direction algorithms for
  $\ell_1$-problems in compressive sensing}}, {SIAM J. Sc. Comput.}, 33 (2011),
  pp.~250--278.

\end{thebibliography}

\end{document}